\numberwithin{equation}{section}
\numberwithin{equation}{section}
\newtheorem{Thm}{Theorem}[section]
\newtheorem{Lem}[Thm]{Lemma}
\newtheorem{Def}[Thm]{Definition}
\newtheorem{Cor}[Thm]{Corollary}
\newtheorem{Prop}[Thm]{Proposition}
\newtheorem{Ex}[Thm]{Example}
\newtheorem{Rem}[Thm]{Remark}
\DeclareMathOperator{\rad}{rad}
\DeclareMathOperator{\soc}{soc}
\def\lbr{\left(\begin{array}{c}}
	\def\lbrt{\left(\begin{array}{cc}}
		\def\lbrth{\left(\begin{array}{ccc}}
			\def\rbr{\end{array}\right)}
		\newcounter{local}
		\renewcommand\theenumi{\protect\setcounter{local}%
			{171+\the\value{enumi}}\protect\ding{\value{local}}}
\newcommand{\udots}{\mathinner{\mskip1mu\raise1pt\vbox{\kern7pt\hbox{.}}
\mskip2mu\raise4pt\hbox{.}\mskip2mu\raise7pt\hbox{.}\mskip1mu}}
\newcommand\defeqt{\stackrel{\text{$a_{i}$}}{\rule[0.6mm]{0.6cm}{0.1mm}}}
\newcommand\defeqtj{\stackrel{\text{$a_{j}$}}{\rule[0.6mm]{0.6cm}{0.1mm}}}
\newcommand\defeqi{\stackrel{\text{$i$}}{\rule[0.6mm]{0.6cm}{0.1mm}}}
\newcommand\defeqj{\stackrel{\text{$j$}}{\rule[0.6mm]{0.6cm}{0.1mm}}}
\newcommand\defeqk{\stackrel{\text{$k$}}{\rule[0.6mm]{0.6cm}{0.1mm}}}
\newcommand\defeqL{\stackrel{\text{$l$}}{\rule[0.6mm]{0.6cm}{0.1mm}}}
\newcommand\defeqjk{\stackrel{\text{$j'$}}{\rule[0.6mm]{0.6cm}{0.1mm}}}
\newcommand\defeqre{\rule[0.6mm]{0.6cm}{0.1mm}}
\keywords{}
\subjclass[2010]{16G10, 16G20, 16G60, 16G70}
\begin{document}
	
\title[Rogue wave]{The associated graded algebras of Brauer graph algebras II:\\ infinite representation type}
\author[Jing Guo]{Jing Guo}
\address{
	School of Mathematics and Statistics\\
	Qingdao University \\Qingdao\\
	Shandong 266071   \\
	P.R.China}
\email{jingguo@qdu.edu.cn}
\author[Yuming Liu]{Yuming Liu$^*$}
\thanks{$^*$Corresponding author.}
\address{School of Mathematical Sciences\\ Laboratory of Mathematics and Complex Systems\\ Beijing Normal University\\Beijing 100875  \\ P.R.China}
\email{ymliu@bnu.edu.cn}
\author[Yu Ye]{Yu Ye}
\address{School of Mathematical Sciences\\Wu Wen-Tsun Key Laboratory of Mathematics\\University of Science and Technology of China\\Hefei 230026   \\P.R.China \vskip-5pt and \vskip-5pt Hefei National Laboratory \\University of Science and Technology of China\\Hefei  230088 \\P.R.China  }
\email{yeyu@ustc.edu.cn}
\date{}

\begin{abstract} Let $G$ be a Brauer graph and $A$ the associated Brauer graph algebra. Denote by $\mathrm{gr}(A)$ the graded algebra associated with the radical filtration of $A$. The question when $\mathrm{gr}(A)$ is of finite representation type was answered in \cite{GL}. In the present paper, we characterize when $\mathrm{gr}(A)$ is domestic in terms of the associated Brauer graph $G$.
\end{abstract}
	
\subjclass[2020]{Primary 16G20; Secondary 11Bxx}
	
\keywords{Associated graded algebra; Brauer graph algebra; $\star$-Condition; Domestic type; Unbalanced edge pair.}

\maketitle \vspace{-0.9cm}
	
\section{Introduction}
	
This is a continuation of our study on the associated graded algebras of Brauer graph algebras in \cite{GL}. Since the last paper has determined the finite representation type of this class of algebras, we focus in the present paper on the infinite representation type of them. In particular, we will characterize the domestic associated graded algebras of Brauer graph algebras.

Brauer graph algebras are finite dimensional algebras and originate in the modular representation theory of finite groups. They are defined by combinatorial data based on graphs: underlying every Brauer graph algebra is a finite graph with a cyclic orientation of the edges at every vertex and a multiplicity function. The class of Brauer graph algebras coincides with the class of symmetric special biserial algebras. For the representation theory of Brauer graph algebras, we refer the reader to the survey article \cite{S}.

The idea of associating a finite dimensional algebra $A$ to a graded algebra (denoted by $\mathrm{gr}(A)$) with the radical filtration of $A$ is not rare in representation theory (see for example, \cite{CPS,RR}). For a finite dimensional algebra $A$ defined by quiver with relations, $\mathrm{gr}(A)$ often appears as a degeneration of $A$. The notion of degeneration comes from the geometric representation theory of algebras. It is known that if $\Lambda_0$ is a degeneration of some algebra $\Lambda_1$ and $\Lambda_0$ is representation-finite (resp. tame), then $\Lambda_1$ is also representation-finite (resp. tame) (see \cite{Ga,Ge}). However, the representation type of $\Lambda_0$ is usually more complicated than that of $\Lambda_1$. In \cite{GL}, we initiated the study on comparing the representation theory of $\mathrm{gr}(A)$ and that of $A$ in case that $A$ is a Brauer graph algebra. We have characterized all the algebras $\mathrm{gr}(A)$ which are of finite representation type and described the relationship between the Auslander-Reiten quivers of $\mathrm{gr}(A)$ and $A$ in this case.

A Brauer graph algebra $A$ is a self-injective (even symmetric) special biserial algebra; the associated graded algebra $\mathrm{gr}(A)$ is usually not self-injective. Nevertheless, $\mathrm{gr}(A)$ is still a special biserial algebra. Thus, both $A$ and $\mathrm{gr}(A)$ have tame representation type. To describe the tameness more precisely, one needs the notions of domestic and polynomial growth. The relationship between these notions are: domestic $\Longrightarrow$ polynomial growth $\Longrightarrow$ tame (cf. Section 2.1). Bocian and Skowro\'{n}ski have characterized when a Brauer graph algebra $A$ is domestic in \cite{BS}. In the present paper, we characterize when the associated graded algebra $\mathrm{gr}(A)$ is domestic. 

To state our main result precisely, let us first introduce some notations. 

\begin{Def}[See {\cite[Definition 2.4]{GL}}]\label{graded-degree}
Let $G$ be a Brauer graph. For each vertex $v$, we denote by $m(v)$ the multiplicity of $v$ and by $\mathrm{val}(v)$ the valency of $v$, with the convention that a loop is counted twice in $\mathrm{val}(v)$. Moreover, if $\mathrm{val}(v)=1$, we denote by $v'$ the unique vertex adjacent to $v$. For each vertex $v$ in $G$, we define the graded degree $\mathrm{grd}(v)$ as follows. 	
$$
\mathrm{grd}(v)=\begin{cases}
m(v)\mathrm{val}(v), &    \mbox{if }m(v)\mathrm{val}(v)>1,\\
m(v')\mathrm{val}(v'), &   \mbox{if }m(v)\mathrm{val}(v)=1.
\end{cases}
$$
\end{Def}

\begin{Def} [Compare with {\cite[Definition 2.12]{GL}}] \label{unbalanced-edge} Let $G$ be a Brauer graph. 
\begin{enumerate}[$(1)$]
\item If $u\defeqi v$ is an edge in $G$, we write the subgraph of $G$ by removing the edge $i$ as follows: $G\setminus i=G_{i,u}\bigcup G_{i,v}$, where $G_{i,u}$ (resp. $G_{i,v}$) is the connected branch of $G\setminus i$ containing the vertex $u$ (resp. $v$). (Note that it may happen that $G_{i,u}=G_{i,v}$.) Moreover, we denote the set of vertices in $G_{i,u}$ (resp. $G_{i,v}$) by $V(G_{i,u})$ (resp. $V(G_{i,v}))$.
\item An unbalanced edge in $G$ is defined to be an edge associated with two vertices with different graded degrees. For any unbalanced edge $v_S\defeqi v_L$  with $ \mathrm{grd}(v_S)< \mathrm{grd}(v_L)$ in $G$, we write the subgraph of $G$ by removing the edge $i$ as follows:  $G\setminus i=G_{i,L}\bigcup G_{i,S}$, where $G_{i,L}$ (resp. $G_{i,S}$) is the connected branch of $G\setminus i$ containing the vertex $v_{L}$ (resp. $v_{S}$).
\end{enumerate}
\end{Def}

\begin{Def} [See Section 3 for the details] Let $G$ be a Brauer graph.
\begin{enumerate}[$(1)$] \item A walk (for the notion of a walk in $G$, see Definition \ref{walk-in-graph} below) $v_{1}~\defeqre ~v_{2}~\defeqre ~\cdots ~\defeqre ~v_{k}$ from $v_{1}$ to $v_{k}$ in a Brauer graph $G$ is called degree decreasing if $\mathrm{grd}(v_{1})\geq \mathrm{grd}(v_{2})\geq\cdots\geq \mathrm{grd}(v_{k})$.
\item Suppose that $G$ is a Brauer tree with an exceptional vertex $v_0$ of multiplicity  $m_0$. 
\begin{enumerate}[$(2.1)$]
  \item $\kappa_0$ is defined to be the number of unbalanced edges $v_S \defeqi v_L$ in $G$ such that the exceptional vertex $v_0$ is a vertex in $G_{i,S}$.
  \item For any vertices $u$ and  $v$ in $G$, $d_G(u,v)$ is defined to be the number of edges in the unique walk from $u$ to $v$.
  \item Given two unbalanced edges $v^{(i)}_S\defeqi v^{(i)}_L$ and $v^{(j)}_S\defeqj v^{(j)}_L$ in $G$, we call
$(i,j)$ an unbalanced edge pair if $j$ is an edge in $G_{i,S}$ and $d_G(v^{(j)}_S,v^{(i)}_S)+1=d_G(v^{(j)}_L,v^{(i)}_S)$. Let $\kappa_1$ be the number of unbalanced edge pairs in $G$.
\end{enumerate}
\end{enumerate}
\end{Def}

With the notations above, we have the following main result. One interesting point in this result is that, as in the finite representation type situation (see \cite{GL}), the graded degree function plays a key role in controlling the domestic type of $\mathrm{gr}(A)$. 

\begin{Thm} {\rm(See Theorem \ref{main-result})}
Let $A$ be the Brauer graph algebra associated with a Brauer graph $G=(V(G),E(G))$ and $\mathrm{gr}(A)$ the graded algebra associated with the radical filtration of $A$, where $V(G)$ is the vertex set and $E(G)$ is the edge set. Then $\mathrm{gr}(A)$ is of polynomial growth if and ony if  $\mathrm{gr}(A)$ is domestic. 

Furthermore, we have
\begin{enumerate}[$(I)$]
\item $\mathrm{gr}(A)$ is $1$-domestic if and only if one of the following holds:
\begin{enumerate}[$(1)$]
\item $G$ is a Brauer tree with an exceptional vertex $v_0$ of multiplicity $m_0$ such that $\kappa_0(m_0-1)+\kappa_1=1$.
\item $G$ is a tree and there exist two distinct vertices $w_0,w_1$, such that the following conditions hold:
\begin{enumerate}[$(2.1)$]						
\item $m(w_0)=m(w_1)=2$ and $m(v)=1$ for $v\neq w_0,w_1$,
\item $\mathrm{grd}(w_0)=\mathrm{grd}(w_1)$,
\item Any walk from $w_0$ (or from $w_1$) is degree decreasing.
\end{enumerate}
\item $G$ is a graph with a unique cycle of odd length and $m(v)=1$ for all $v\in V(G)$, and satisfies the following conditions:
\begin{enumerate}[$(3.1)$]		
\item $\mathrm{grd}(u)=\mathrm{grd}(v)$ for any two vertices $u$ and $v$ in the unique cycle,
\item Any walk from any vertex in the unique cycle is degree decreasing.
\end{enumerate}
\end{enumerate}			
\item $\mathrm{gr}(A)$ is $2$-domestic if and only if $G$ satisfies the following conditions:
\begin{enumerate}[$(1)$]	
\item $G$ is a graph with a unique cycle of even length and $m(v)=1$ for all $v\in V(G)$,
\item $\mathrm{grd}(u)=\mathrm{grd}(v)$ for any two vertices $u$ and $v$ in the unique cycle,
\item Any walk from any vertex in the unique cycle is degree decreasing.
\end{enumerate}
\item $\mathrm{gr}(A)$ is not $n$-domestic for $n\geq3$.
\end{enumerate}
\end{Thm}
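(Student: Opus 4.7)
The plan is to exploit the fact that $\mathrm{gr}(A)$ is again a special biserial algebra, so that its indecomposable modules are classified by strings and bands. For such algebras the domestic number equals the number of primitive bands (counted up to cyclic rotation and inverse) whenever this number is finite, and the first equivalence ``polynomial growth if and only if domestic'' reduces to showing that infinitely many primitive bands force non-polynomial growth (by producing one-parameter families of indecomposables whose dimensions grow too fast). Thus the whole theorem becomes a bookkeeping problem: read off primitive bands from the combinatorics of $(G,\mathrm{grd})$ and identify when this count is exactly $1$, exactly $2$, or at least $3$.

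The first technical step is to make explicit the quiver and relations of $\mathrm{gr}(A)$ in terms of $(G,\mathrm{grd})$, building on the description in the companion paper \cite{GL}: at each vertex $v$ of $G$ the cyclic ordering of incident edges produces a cycle in the Gabriel quiver of $\mathrm{gr}(A)$, and this cycle is truncated to length $\mathrm{grd}(v)$. With this presentation in hand, strings in $\mathrm{gr}(A)$ correspond bijectively to suitably decorated walks in $G$, and bands correspond to closed such walks that are not proper powers. The key local observation is that when a walk meets an unbalanced edge $v_S\defeqre v_L$ with $\mathrm{grd}(v_S)<\mathrm{grd}(v_L)$, the truncation at $v_S$ obstructs free passage through that edge; a closed walk can therefore ``turn around'' only at vertices whose graded degree is controlled from above by the entry data. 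This is precisely what the degree-decreasing hypothesis enforces globally, and what the invariants $\kappa_0$ and $\kappa_1$ count locally.

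Once this dictionary between bands in $\mathrm{gr}(A)$ and walks in $G$ is fixed, the cases (I)(1)--(3), (II) and (III) are obtained by a direct enumeration. In the tree case, a primitive band can arise only from the ``extra winding'' at the exceptional vertex (each such vertex sitting on the small side of an unbalanced edge contributes $m_0-1$ bands, summed over such edges this gives $\kappa_0(m_0-1)$), from unbalanced edge pairs (contributing $\kappa_1$), or from a matched pair of multiplicity-$2$ vertices as in (I)(2); the degree-decreasing condition rules out further bands being assembled by travelling into branches. In the non-tree case, the unique cycle of $G$ itself lifts to bands, precisely one when its length is odd (since the cycle coincides with its inverse after rotation) and precisely two when its length is even, and degree-decreasing walks on the branches again prevent additional contributions. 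Any higher multiplicity on the cycle or a second independent cycle produces $\geq 3$ primitive bands, yielding (III).

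The main obstacle I expect is the careful bookkeeping in case (I)(1): one must verify that $\kappa_0(m_0-1)+\kappa_1$ counts primitive bands without any double counting when the exceptional vertex is adjacent to an unbalanced edge, when two unbalanced edges share a common endpoint, or when an unbalanced edge pair sits ``inside'' the branch $G_{i,S}$ of a larger unbalanced edge; dually, one must show that in a Brauer tree with exceptional vertex no primitive band can be produced outside the two mechanisms above. I expect this to rest on a careful induction along $G$ using the removal operation $G\setminus i = G_{i,u}\cup G_{i,v}$ from Definition \ref{unbalanced-edge}, which permits a reduction to smaller Brauer graphs while tracking how $\kappa_0$ and $\kappa_1$ change.
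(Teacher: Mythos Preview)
Your proposal has the right high-level picture (special biserial $\Rightarrow$ strings and bands; $n$-domestic $\Leftrightarrow$ exactly $n$ bands), but the central counting claim in the Brauer-tree case is wrong, and you miss the organizing reduction that makes the paper's argument work.

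The paper does \emph{not} enumerate bands directly. Its first move is the observation that $\overline{A}$ is a quotient of $\overline{\mathrm{gr}(A)}$, so whenever $A$ itself fails to be domestic (resp.\ of polynomial growth), $\mathrm{gr}(A)$ inherits this automatically. Combined with the Bocian--Skowro\'nski classification (Theorem~\ref{Brauer-graph-domestic}), this reduces everything to the three shapes of $G$ in the statement. In each shape the argument is a dichotomy: either $G$ satisfies the $\star$-condition with respect to every relevant unbalanced edge, in which case Lemma~\ref{finite-number} shows $\overline{\mathrm{gr}(A)}$ and a suitable quotient (often $\overline{A}$ itself, sometimes $\overline{B}$ for an auxiliary Brauer graph $G'$) have the same bands, so one reads off $1$- or $2$-domestic from the known answer for $A$ or $B$; or it fails, and one constructs two explicit bands $b_1,b_2$ sharing a maximal directed prefix so that $b_2^kb_1$ is a band for every $k$ (Lemma~\ref{nondomestic}), giving non-polynomial growth directly. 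The case $m_0\geq 3$, $\kappa_0\neq 0$ uses yet another device (Proposition~\ref{Brauer-tree-not-polynomial}): a quotient of $\overline{\mathrm{gr}(A)}$ is identified with a quotient of $\overline{B}$ for a nondomestic auxiliary Brauer graph algebra $B$, and Lemma~\ref{same-number} transfers non-polynomial growth.

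Your assertion that ``$\kappa_0(m_0-1)+\kappa_1$ counts primitive bands'' is the genuine gap. The paper never claims this, and it is false: that quantity takes the value $0$ (finite type), the value $1$ (one band), or a value $\geq 2$, and in the last case there are \emph{infinitely many} bands, not $\kappa_0(m_0-1)+\kappa_1$ of them. Two band-producing mechanisms do not add; they interact multiplicatively via concatenation, which is exactly what Lemma~\ref{nondomestic} exploits. Your proposed induction along $G\setminus i$ cannot recover the theorem without confronting this non-linearity, and your sketch for part~(III) (``$\geq 3$ primitive bands yields (III)'') is backwards: exhibiting three bands would only show $\mathrm{gr}(A)$ is at least $3$-domestic, not that it is nondomestic. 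Finally, your reduction of ``polynomial growth $\Leftrightarrow$ domestic'' to ``infinitely many bands $\Rightarrow$ not polynomial growth'' is correct but you give no argument for the latter; the paper avoids invoking any such general fact about string algebras by establishing non-polynomial growth explicitly in every failure case.
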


This paper is organized as follows. In Section 2, we recall various definitions and known facts needed in this paper, including representation type of finite dimensional algebras, special biserial algebras and string algebras, Brauer graph algebras and their associated graded algebras. In Section 3, we first introduce the notions of $\star$-condition and unbalanced edge pair and prove some preliminary results; then we state our main result and its consequences. The proof of main result is based on careful analyses in different cases according to the shapes of Brauer graphs; the detailed proofs and examples of three main cases are filled in Section 4-6 respectively.
	
\section{Preliminaries}

Throughout this paper, we fix an algebraically closed field $k$. Unless otherwise stated, all algebras will be finite dimensional $k$-algebras, and all their modules will be finite dimensional left modules. For a $k$-algebra $A$, we denote by $\rad(A)$ the Jacobson radical of $A$. For an $A$-module $M$, we denote by $\soc(M)$ and $\rad(M)$ the socle and the radical of $M$, respectively. The length of a module $M$ is denoted by $\ell(M)$, it means the number of composition factors in any composition series of $M$.
	
\subsection{Representation type of finite dimensional algebras} We recall the various notions on representation types of finite dimensional algebras and their relations from the textbook \cite[Section XIX.3]{SS}.

Let $A$ be a finite dimensional $k$-algebra. We say that $A$ is of finite representation type, if there are only finitely many non-isomorphic indecomposable $A$-modules.
	
Let $k[x]$ be the polynomial algebra in one variable over $k$. We say that $A$ is of tame representation type, if for any dimension $d$, there exists a finite number of $A$-$k[x]$-bimodules $Q_i$, for $1\leq i\leq n_d$, which are finitely generated and free as right $k[x]$-modules such that all but a finite number of isomorphism classes of indecomposable $A$-modules of dimension $d$ are of the form $Q_i\otimes_{k[x]}k[x]/(x-\lambda)$ for some $\lambda\in k$ and some $i$. For each $d$, let $\mu_{A}(d)$ be the least number of such $A$-$k[x]$-bimodules. We say that $A$ is of polynomial growth type if there exists a positive integer $m$ such that $\mu_{A}(d)\leq d^{m}$ for all $d\geq 2$; $A$ is of finite growth type (or equivalently, domestic) if $\mu_{A}(d)\leq m$ for some positive integer $m$ and for all  $d\geq1$ and $A$ is $n$-domestic (or $n$-parametric) if $n$ is the least such integer $m$.

Clearly every domestic algebra is of polynomial growth. In other words, if an algebra is not of polynomial growth, then the algebra is nondomestic. For examples of nondomestic algebras of polynomial growth, we refer the reader to \cite{S1}.




It is well known that an algebra of infinite representation type that is not of tame representation type is of wild representation type, however, our study does not involve the wild representation type.
	
\subsection{Special biserial algebras and string algebras} These algebras are defined by quivers and relations. For more details on these algebras, we refer to \rm\cite{BR}, \rm\cite{E}, and \rm\cite{S}.

For a quiver $Q$, we denote by $Q_0$ and $Q_1$ its vertex set and arrow set respectively. We write a path $p$ in a quiver from right to left and denote by $s(p)$ and $t(p)$ the start
and the end of $p$, respectively. The length of a path is defined in an obvious way. As usual, the trivial path at a vertex $i$ is denoted by $e_i$.
	
\begin{Def}  \label{special-biserial-algebra} A finite dimensional $k$-algebra $A$ is called special biserial if there is a
	quiver $Q$ and an admissible ideal $I$ in $kQ$ such that $A$ is Morita equivalent to $kQ/I$ and
	such that $kQ/I$ satisfies the following conditions:\\
	$(1)$ At every vertex $v$ in $Q$ there are at most two arrows starting at $v$ and there are at most
	two arrows ending at $v$.\\
	$(2)$ For every arrow $\alpha$ in $Q$, there exists at most one arrow $\beta$ such that $\beta\alpha\notin I$ and there
	exists at most one arrow $\gamma$ such that $\alpha\gamma\notin I$.
	
	A special biserial algebra $A$ is called a string algebra if the defining ideal $I$ is generated by paths.
\end{Def}

Given a special biserial algebra $A=kQ/I$, we can associate a string algebra $\bar{A}$ as follows. Set
$$L:=\{i\in Q_{0}\mid Ae_i\mbox{ is an injective and not uniserial module}\},$$
$$S_{0}:=\bigoplus\limits_{i\in L}\soc(Ae_i),$$
where $Ae_i$ denotes the indecomposable projective $A$-module at vertex $i$. Then $S_{0}$ is an ideal of $A$ and the quotient algebra $\bar{A}=A/S_{0}$ is a string algebra (cf. \cite[Section II.1.3]{E}). Note that the operation $\overline{(\cdot)}$ preserves representation type and we can reconstruct the AR-quiver of $A$ from the AR-quiver of $\overline{A}$ easily.

Suppose now that $A=kQ/I$ is a string algebra. For an arrow $\beta\in Q_{1}$, we denote by $\beta^{-1}$ the formal inverse of $\beta$ and set $s(\beta^{-1})=t(\beta)$, $t(\beta^{-1})=s(\beta)$, $(\beta^{-1})^{-1}=\beta$. For convenience, the formal inverse of an arrow will be called an inverse arrow. A word of length $n$ is defined by a sequence $c_{n}\ldots c_{2}c_{1}$, where $c_{i}\in Q_{1}$ or $c_{i}^{-1}\in Q_{1}$, and where $t(c_{i})=s(c_{i+1})$ for $1\leq i\leq n-1$. We define
$$s(c_{n}\ldots c_{2}c_{1})=s(c_{1}),~t(c_{n}\ldots c_{2}c_{1})=t(c_{n}),$$
and
$$(c_{n}\ldots c_{2}c_{1})^{-1}=c_{1}^{-1}c_{2}^{-1}\ldots c_{n}^{-1}.$$
For every vertex $v$ in $Q$, there is an empty word $1_{v}$ of length $0$ such that $t(1_{v})=s(1_{v})=v$ and $1_{v}^{-1}=1_{v}$. Suppose that a word $C:=c_{n}\ldots c_{2}c_{1}$ satisfies $s(C)=t(C)$, we define a rotation of $C$ as a word of the form $c_{i}\ldots c_{1}c_{n}\ldots c_{i+1}$. The product of two words is defined by placing them next to each other, provided that the resulting sequence is a word.

A word  $C$ is called a string provided either $C=1_{v}$ for some vertex $v$ in $Q$ or $C=c_{n}\ldots c_{2}c_{1}$ satisfying $c_{i+1}\neq c_{i}^{-1}$ for $1\leq i\leq n-1$, and no subword (or its inverse) of $C$ belongs to the ideal $I$. We say that a string $C=c_{n}\ldots c_{2}c_{1}$ with $n\geq 1$ is directed if all $c_i$ are arrows, and $C$ is inverse if all $c_i$ are inverse arrows. A string $C$ of positive length is called a band if all powers of $C$ are strings and $C$ is not a power of a string of smaller length. Note that a band must contain both arrows and inverse arrows.

On the set of strings, we consider two equivalence relations.~Firstly, $\sim$ denotes the relation which identifies $C$ and $C^{-1}$; and secondly, we define $\sim_{A}$ to be the equivalence relation which identifies each word with its rotations and their inverses. Let $\mathrm{St}(A)$ (or simply $\mathrm{St}$) be a set of representatives of strings in $A$ under $\sim$, and let $\mathrm{Ba}(A)$ (or simply $\mathrm{Ba}$) be the set of representatives of bands under $\sim_{A}$. In the following, we call a subword of a string a substring.
	
It is well known that every indecomposable module over a string algebra is either a string module or a band module. For each element $C$ in $\mathrm{St}(A)$, there is a unique string $A$-module $M(C)$ up to isomorphism. For each element $B$ in $\mathrm{Ba}(A)$ and for any finite dimensional indecomposable $k[x,x^{-1}]$-module $M=(V,\varphi)$ (where $V$ is a $n$-dimensional $k$-vector space and $\varphi$ is an invertible linear endomorphism of $V$), there is a band module $M(B,n,\varphi)$ corresponding to $B$ and $M$. For a detailed explanation of $M(B,n,\varphi)$, we refer the reader to \cite[p.160-161]{BR}.

\begin{Ex}
	Let $A=kQ$ be the Kronecker algebra defined by the following quiver
	$$\begin{picture}(8.00,20.00)
	\unitlength=1.00mm
	\special{em:linewidth 0.4pt} \linethickness{0.4pt}
	\put(5,3){$1$} \put(20,3){$2$}
	\put(9,5){\vector(1,0){8.0}}
	\put(9,3){\vector(1,0){8.0}} \put(12,6){$\alpha$}
	\put(12,-1){$\beta$}
	\end{picture}$$
	Then $A$ is a string algebra and we can choose $\mathrm{St}$ and $\mathrm{Ba}$ as follows. $$\mathrm{St}=\{1_{1},1_{2},\alpha,\beta,\beta^{-1}\alpha,\alpha\beta^{-1},\beta\alpha^{-1}\beta,\alpha\beta^{-1}\alpha,\cdots\},$$ $$\mathrm{Ba}=\{\beta^{-1}\alpha\}=\{\alpha\beta^{-1}\}.$$
   The string module $M(\beta^{-1}\alpha)$ has the Loewy diagram $\xymatrix@R=0.05pc{1\ar@{-}[dr]^{\beta}& &1\ar@{-}[dl]_{\alpha}\\ &2&}$, and the string module $M(\alpha\beta^{-1})$ has the Loewy diagram $\xymatrix@R=0.05pc{&1\ar@{-}[dl]_{\alpha}\ar@{-}[dr]^{\beta}&\\ 2&&2}$. The band module $M(\beta^{-1}\alpha,2,\varphi)$ defined by the band $\beta^{-1}\alpha$ and the $k[x,x^{-1}]$-module $(k^2,\varphi=\left
(\begin{array}{cc}
\lambda&\hspace{0.5em}1\\
0&\hspace{0.5em}\lambda
\end{array}\right))$ corresponds to the representation
$$\xymatrix{k^2 \ar@<-2pt>[r]_{\varphi^{-1}}\ar@<2pt>[r]^{I_2}& k^2,}$$ 
where $0\neq\lambda\in k$ and $I_2$ denotes the $2\times 2$ identity matrix.
\end{Ex}
	
For the representation types of special biserial algebras, there is the following theorem.
		
\begin{Thm}[{\cite[II.3.1 and II.8.1]{E}}]
\begin{enumerate}[$(1)$]
\item Any special biserial algebra $A$ is tame.
\item A string algebra $A$ is of finite representation type if and only if there is no band in $A$.
\end{enumerate}
\end{Thm}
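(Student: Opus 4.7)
The plan rests on the Butler--Ringel classification (due independently to Wald--Waschb\"usch) of indecomposable modules over a special biserial algebra: every indecomposable is isomorphic either to a string module $M(C)$ for some $C\in\mathrm{St}(A)$, or to a band module $M(B,n,\varphi)$ for some $B\in\mathrm{Ba}(A)$, some $n\geq 1$, and some indecomposable $k[x,x^{-1}]$-module $(k^n,\varphi)$. Granting this classification, both parts of the theorem reduce to direct combinatorial counting.

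For part (1), I would first reduce to the string-algebra case using the construction $A\mapsto\bar A=A/S_0$ recalled just before the theorem statement: this loses only finitely many indecomposables (the projective-injective non-uniserial ones) and, as noted in the excerpt, preserves representation type, so it suffices to prove tameness for a string algebra. Now fix a dimension $d$. Since $\dim M(C)=\ell(C)+1$, only finitely many strings $C$ give string modules of dimension $d$, contributing a finite ``discrete'' list of isomorphism classes. Each band module $M(B,n,\varphi)$ has dimension $n\cdot\ell(B)$, so only finitely many pairs $(B,n)$ are relevant; for each such pair, the indecomposable $k[x,x^{-1}]$-modules of dimension $n$ form a single $1$-parameter family indexed by $\lambda\in k^*$ (with $n$ the Jordan block size). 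A direct construction of a ``universal'' band module over $k[x]$ realizes this family as an $A$-$k[x]$-bimodule $Q_{B,n}$ that is finitely generated and free as a right $k[x]$-module---exactly the data required by the definition of tameness.

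For part (2), the ``only if'' direction is immediate from the classification: if $B\in\mathrm{Ba}(A)$, then $\{M(B,1,\lambda)\mid\lambda\in k^*\}$ is an infinite family of pairwise non-isomorphic indecomposables of fixed dimension $\ell(B)$, so $A$ is representation-infinite. For the ``if'' direction, assume $\mathrm{Ba}(A)=\emptyset$; it suffices to show $\mathrm{St}(A)$ is finite. A K\"onig's lemma argument on the finite alphabet $Q_1\cup Q_1^{-1}$ shows that if arbitrarily long strings existed then an infinite string would exist, and pigeonhole on the finite vertex set $Q_0$ forces some vertex $v$ to be visited infinitely often by it. The closed substring between two such visits is either purely directed (or purely inverse), contradicting the admissibility of $I$ once its length exceeds the nilpotency index of $\rad(A)$, or contains both arrows and inverse arrows, in which case a further pigeonhole iteration on such closed substrings extracts a genuine band---contradicting $\mathrm{Ba}(A)=\emptyset$. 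Hence $\mathrm{St}(A)$ is finite and $A$ is representation-finite.

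The principal obstacle is the Butler--Ringel classification itself: proving that every indecomposable module over a special biserial algebra admits a canonical string or band description requires a delicate covering-theoretic or direct Loewy-theoretic argument. Once that classification is in hand, both parts of the theorem reduce to the combinatorial accounting above, which is comparatively straightforward.
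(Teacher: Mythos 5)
This statement is quoted in the paper from Erdmann's book (\cite[II.3.1 and II.8.1]{E}) and is given no proof there, so there is nothing internal to compare your argument against; what you have written is essentially the standard proof from the literature (Wald--Waschb\"usch, Butler--Ringel), and it is correct in outline. The reduction to $\bar A=A/S_0$, the counting of string modules in each dimension, the realization of the band modules of a fixed dimension by finitely many $A$-$k[x]$-bimodules free over $k[x]$, and the ``only if'' direction of (2) via the family $\{M(B,1,\lambda)\}_{\lambda\in k^*}$ are all exactly as in the sources, and you correctly identify the classification of indecomposables into string and band modules as the real content being taken on faith.

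One step in the ``if'' direction of (2) is stated slightly too loosely. Pigeonholing on the vertex set $Q_0$ only gives a vertex visited twice, and the closed subword between two visits to the same vertex need not have the property that its powers are strings: the junction condition at the seam ($c_{i+1}\neq c_i^{-1}$ and no relation straddling the seam) can fail. The correct pigeonhole is on the finite set of \emph{letters} $Q_1\cup Q_1^{-1}$ (or, equivalently, on pairs of consecutive letters): once the same letter recurs at positions $i<j$ in the infinite string, the subword $c_j\cdots c_{i+1}$ is a cyclic word whose square, and hence every power, is again a string, because the seam reproduces a junction already present in the original string. From there your case split (purely directed or inverse, hence bounded by the nilpotency index of $\rad(A)$; otherwise a band or a proper power of one) goes through. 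Since you do signal ``a further pigeonhole iteration,'' I read this as a presentational slip rather than a gap, but it is the one place where the argument as literally written would not close.
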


The representation type and Auslander-Reiten quivers for self-injective special biserial algebras are well studied by Erdmann and Skowro\'{n}ski in \cite{ES}. Before stating their results, we recall some notions. For any algebra $A$, we denote by $\Gamma_{A}$ the Auslander-Reiten quiver of $A$ and by $_{s}\Gamma_{A}$ the stable Auslander-Reiten quiver of $A$. For the shapes of the translation quivers $\mathbb{Z}A_{\infty}^{\infty}$, $\mathbb{Z}A_{\infty}$, $\mathbb{Z}A_{\infty}/\textless\tau^{n}\textgreater$, $\tilde{A}_{p,q}$, we refer to \cite{HPR}. By $\tilde{A}_{p,q}$ we denote the following orientation of the quiver with underlying extended Dynkin diagram of type $\tilde{A}_{p+q-1}$:

$$\begin{small}\xymatrix@R-2em@L-1em{
&\cdot\ar@{->}[r]_{\alpha_2} &\cdot & \cdots &\cdot\ar@{->}[dr]_{\alpha_p}& \\
\cdot\ar@{->}[ur]_{\alpha_1}\ar@{->}[dr]^{\beta_1}& & & & &\cdot\\
&\cdot\ar@{->}[r]^{\beta_2} &\cdot & \cdots &\cdot\ar@{->}[ur]^{\beta_q}&
}\end{small}
$$

\begin{Thm}{\rm(\cite[Theorem 2.1]{ES})} \label{polynomial-domestic}
	Let $A=kQ/I$ be a self-injective special biserial algebra. The following are equivalent:	
	\begin{enumerate}[$(1)$]	
		\item  $_{s}\Gamma_{A}$ has a component of the form $\mathbb{Z}\tilde{A}_{p,q}$.
		\item $_{s}\Gamma_{A}$ is infinite but has no component of the form $\mathbb{Z}A_{\infty}^{\infty}$.
		\item There are positive integers $m$, $p$, $q$ such that $_{s}\Gamma_{A}$ is a disjoint union of $m$ components of the form $\mathbb{Z}\tilde{A}_{p,q}$, $m$ components of the form $\mathbb{Z}A_{\infty}/\textless\tau^{p}\textgreater$, $m$ components of the form $\mathbb{Z}A_{\infty}/\textless \tau^{q}\textgreater $ and infinitely many components of the form $\mathbb{Z}A_{\infty}/\textless\tau\textgreater$.
		\item All but a finite number of components of $\Gamma_{A}$ are of the form $\mathbb{Z}A_{\infty}/\textless\tau\textgreater$.
		\item The number of primitive walks in $A$ is a positive integer.
		
		\item $A$ is representation-infinite domestic.
		\item $A$ is representation-infinite of polynomial growth.
	\end{enumerate}
\end{Thm}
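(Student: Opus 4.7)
The plan is to prove the seven-way equivalence by running the cycle of implications $(1) \Rightarrow (2) \Rightarrow (3) \Rightarrow (4) \Rightarrow (6) \Rightarrow (7) \Rightarrow (5) \Rightarrow (1)$. The principal tools are: (a) the reduction from a special biserial algebra $A$ to the string algebra $\bar A = A/S_0$ recalled in Section 2.2, which preserves representation type and, at the level of AR-quivers, merely replaces non-uniserial projective-injectives by their ``hearts'', so the component structure of $\Gamma_A$ and $\Gamma_{\bar A}$ can be compared vertex by vertex; (b) the Butler--Ringel classification of indecomposables over a string algebra as string modules $M(C)$ and band modules $M(B,n,\varphi)$; and (c) the combinatorial description of the AR-translate in terms of hook and cohook operations on strings, which gives a purely combinatorial handle on $\tau$-orbits and walks in the AR-quiver.

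First I would establish the structural equivalences $(1) \Leftrightarrow (2) \Leftrightarrow (3) \Leftrightarrow (4)$. Using (b) and (c), every band $B$ produces a one-parameter family of tubes in $_s\Gamma_A$ whose ranks are read off from the $\tau$-period of $B$; every string module that is not eventually $\tau$-periodic sits in a component of type $\mathbb{Z}A_\infty^\infty$, whereas one that becomes periodic on both sides is attached to a $\mathbb{Z}\tilde A_{p,q}$ component glued to two exceptional tubes. Self-injectivity prevents AR-orbits from terminating pathologically at projective-injective vertices, so the list of possible infinite stable components is exactly $\{\mathbb{Z}A_\infty^\infty,\ \mathbb{Z}\tilde A_{p,q},\ \mathbb{Z}A_\infty/\langle\tau^n\rangle\}$. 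This gives $(1)\Leftrightarrow(2)$ immediately. The equivalence with $(3)$ is then bookkeeping, since each $\mathbb{Z}\tilde A_{p,q}$ must be glued to exactly one tube of rank $p$, one tube of rank $q$, and infinitely many homogeneous tubes parametrising the band modules over $B$; counting these pairings explains the multiplicity $m$. Passing between (3) and (4) only involves reattaching the projective-injective vertices, using self-injectivity.

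The implications $(4)\Rightarrow(6)\Rightarrow(7)$ are straightforward counting: each homogeneous tube $\mathbb{Z}A_\infty/\langle\tau\rangle$ supports precisely one one-parameter family of band modules, so once all but $m$ components are homogeneous, we obtain $\mu_A(d)\leq m$ for every $d$, giving $m$-domesticity and a fortiori polynomial growth. For $(7)\Rightarrow(5)$ I would argue contrapositively: infinitely many primitive walks would yield, for each fixed $d$, an unbounded number of distinct one-parameter families of indecomposables of dimension $\leq d$, contradicting polynomial growth; and representation-infiniteness together with (b) forces the existence of at least one band, hence at least one primitive walk.

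The main obstacle is the closing implication $(5)\Rightarrow(1)$, which must construct an explicit $\mathbb{Z}\tilde A_{p,q}$ component from a given primitive walk $B$. The strategy is to analyse how strings grow under iterated hook and cohook addition starting from $B$: on each of the two ``sides'' of $B$ one attaches maximal substrings until meeting another band, and the lengths $p$ and $q$ of these two finite sequences define the Euclidean type of the resulting component. Showing that the process terminates (so that $p,q<\infty$) uses the finiteness of primitive walks together with self-injectivity, and verifying that the resulting subquiver of $_s\Gamma_A$ has underlying tree class $\tilde A_{p,q}$ requires a careful analysis of the Butler--Ringel irreducible-morphism combinatorics at the strings connecting the two exceptional tubes. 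This combinatorial bookkeeping is where the bulk of the genuine work in the theorem lies.
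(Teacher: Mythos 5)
You should first be aware that the paper does not prove this statement at all: Theorem \ref{polynomial-domestic} is quoted verbatim from Erdmann--Skowro\'{n}ski (their Theorem 2.1 in \cite{ES}), so there is no in-paper argument to compare against and any proof must reconstruct theirs. Your overall architecture --- reduce to the string algebra $\bar A$, invoke the Butler--Ringel classification of indecomposables, and control $\tau$-orbits via hooks and cohooks --- is the right framework, and your identification of $(5)\Rightarrow(1)$ as the place where the real component-building work happens is accurate. However, two of your intermediate steps contain genuine gaps.

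First, your contrapositive argument for $(7)\Rightarrow(5)$ fails as stated. If $A$ has infinitely many primitive walks, it is \emph{not} true that ``for each fixed $d$ there is an unbounded number of one-parameter families of indecomposables of dimension $\le d$'': a finite quiver has only finitely many words of each length, hence only finitely many bands and finitely many one-parameter families in each fixed dimension, no matter how many primitive walks exist in total. What must be contradicted is the \emph{polynomial} bound $\mu_A(d)\le d^m$ as $d\to\infty$, and this requires the combinatorial input that infinitely many bands force the existence of two bands that can be concatenated freely, producing exponentially many pairwise non-equivalent bands of a single dimension --- exactly the mechanism of Lemma \ref{nondomestic} in the present paper, where $2^{mn_2'-2}-1$ such bands of one dimension $d$ are exhibited. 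Second, in $(4)\Rightarrow(6)$ you pass from ``all but finitely many components are homogeneous tubes'' to ``$\mu_A(d)\le m$'' by counting components, but there are always infinitely many homogeneous tubes (one for each band and each $\lambda\in k^{*}$), so the count of non-homogeneous components does not directly bound $\mu_A(d)$; you must first deduce from $(4)$ that the number of primitive walks is finite and then bound $\mu_A(d)$ by that number. Both gaps are closed by the same missing lemma --- the dichotomy ``finitely many bands $\Rightarrow$ domestic'' versus ``infinitely many bands $\Rightarrow$ exponentially many bands in some dimension $\Rightarrow$ not of polynomial growth'' --- which is the combinatorial heart of the Erdmann--Skowro\'{n}ski proof and needs to be stated and proved explicitly rather than absorbed into counting assertions.
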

\begin{Thm}{\rm(\cite[Theorem 2.2]{ES})} \label{not-of-polynomial}
	Let $A=kQ/I$ be a self-injective special biserial algebra. The following are equivalent:
	\begin{enumerate}[$(1)$]
		\item $_{s}\Gamma_{A}$ has a component of the form $\mathbb{Z}A_{\infty}^{\infty}$.
		\item $_{s}\Gamma_{A}$ has infinitely many (regular) components of the form $\mathbb{Z}A_{\infty}^{\infty}$.
		\item $_{s}\Gamma_{A}$ is a disjoint union of a finite number of components of the form $\mathbb{Z}A_{\infty}/\textless\tau^{n}\textgreater$ with $n>1$, infinitely many components of the form $\mathbb{Z}A_{\infty}/\textless\tau\textgreater$ and infinitely many components of the form $\mathbb{Z}A_{\infty}^{\infty}$.
		\item $A$ has infinitely many primitive walks.
		\item $A$ is not of polynomial growth.
	\end{enumerate}
\end{Thm}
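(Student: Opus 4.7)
The plan is to reduce the theorem to counting primitive bands in the string algebra $\overline{\mathrm{gr}(A)} = \mathrm{gr}(A)/S_0$. Since $\mathrm{gr}(A)$ is special biserial but generally not self-injective, Theorems \ref{polynomial-domestic}--\ref{not-of-polynomial} do not apply verbatim; however, the operation $\overline{(\cdot)}$ preserves representation type, and for string algebras one can show directly (running an analogous argument to Erdmann--Skowro\'nski but without the self-injectivity hypothesis) that $\overline{\mathrm{gr}(A)}$ is $n$-domestic iff $|\mathrm{Ba}(\overline{\mathrm{gr}(A)})| = n$, and ``$|\mathrm{Ba}| < \infty$'' coincides with ``polynomial growth''. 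This immediately delivers the first assertion of the theorem (polynomial growth $\iff$ domestic) and reduces the $n$-domesticity statements to the combinatorial task: show that $|\mathrm{Ba}(\overline{\mathrm{gr}(A)})|$ equals $1$, $2$, or $\geq 3$ precisely when $G$ falls under (I), (II), or (III).

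The next step is to translate bands into combinatorial data on $G$. Arrows in the quiver of $\mathrm{gr}(A)$ correspond to one-step rotations at vertices of $G$, so a string $c_n\cdots c_1$ corresponds to a walk in $G$ with prescribed turning data. The key observation is that such a walk can be continued past an edge $u \defeqi v$ and then \emph{reverse} through $i$ (a ``bounce'') only when $i$ is unbalanced and the bounce occurs on the small-degree side $v_S$, because otherwise the sequence of rotations at the larger vertex is obstructed by the relation imposed by $\mathrm{grd}$. Similarly, a true cycle in $G$ yields a band whenever the graded-degree bounds are respected along it, and two bands are $\sim_A$-equivalent iff their underlying closed walks agree up to rotation and inversion.

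With this dictionary I would perform a case analysis on the topology of $G$. If $G$ is a tree, every band is a ``bounce walk'' between unbalanced edges: for a Brauer tree with exceptional vertex $v_0$, an unbalanced edge whose small side $G_{i,S}$ contains $v_0$ contributes $m_0 - 1$ bands (indexed by how many times the walk winds through $v_0$), while each unbalanced edge pair $(i,j)$ in the sense of Definition 1.3(2.3) contributes exactly one band bouncing between $j$ and $i$, giving the count $\kappa_0(m_0-1) + \kappa_1$ in case (I.1). Case (I.2) is the analogous tree scenario with two multiplicity-$2$ vertices $w_0,w_1$, producing one band that traverses the $w_0$-$w_1$ path twice. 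If $G$ has a unique cycle, the cycle itself is a band; its parity determines whether the reversed traversal is $\sim_A$-equivalent to the original (odd length, one band, case (I.3)) or not (even length, two bands, case II), and the degree-decreasing condition on walks leaving the cycle prevents extra bounce-bands. In every remaining situation, I would exhibit a ``pumping'' construction: if a walk out of the cycle or of the exceptional vertex contains a degree-increasing step, or if $G$ has a second independent cycle, one can splice arbitrarily many bounce excursions into a single band and thus produce infinitely many $\sim_A$-classes.

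The main obstacle is the necessity direction: whenever the listed conditions fail, exhibiting genuinely distinct primitive bands (not merely distinct strings) and showing they are pairwise $\sim_A$-inequivalent. This requires a careful invariant — for instance, the multiset of edges used with multiplicity, or the closed walk up to rotation/inversion — together with a rigidity argument for the ``good'' cases: once a band's underlying walk enters a degree-strictly-decreasing zone, it cannot reverse and must return along the same path to the core cycle or exceptional vertex, pinning the band down completely. I expect the Brauer-tree subcase, where the interaction between $\kappa_0$, $\kappa_1$ and the multiplicity $m_0$ must be disentangled, to be the most delicate piece of the argument and the natural reason the proof is split across Sections 4--6.
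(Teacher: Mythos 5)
The statement you were asked to prove is Theorem~\ref{not-of-polynomial}, which is the classical Erdmann--Skowro\'nski result (\cite[Theorem 2.2]{ES}) about an \emph{arbitrary self-injective special biserial algebra} $A=kQ/I$: the equivalence of the five conditions involving the existence of a $\mathbb{Z}A_{\infty}^{\infty}$ component in $_{s}\Gamma_{A}$, the full decomposition of $_{s}\Gamma_{A}$, the infinitude of primitive walks, and failure of polynomial growth. The paper does not prove this theorem; it simply cites it from \cite{ES} and uses it as an external input. Your proposal does not address this statement at all: it is instead a sketch of the paper's \emph{main theorem} (the characterization of when $\mathrm{gr}(A)$ is domestic for a Brauer graph algebra $A$, Theorem~\ref{main-result}). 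Indeed you explicitly treat Theorem~\ref{not-of-polynomial} as a known tool that ``does not apply verbatim'' to $\mathrm{gr}(A)$ --- that is, you are using the theorem, not proving it.

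Concretely, nothing in your write-up engages with what the equivalences $(1)\Leftrightarrow(2)\Leftrightarrow(3)\Leftrightarrow(4)\Leftrightarrow(5)$ require for a general self-injective special biserial algebra: there is no argument that a single component of the form $\mathbb{Z}A_{\infty}^{\infty}$ forces infinitely many such components, no analysis of how band modules populate homogeneous tubes versus $\mathbb{Z}A_{\infty}^{\infty}$ components, no identification of primitive walks with bands of the associated string algebra in this generality, and no covering-theory or one-parameter-family argument linking infinitely many bands to non-polynomial growth. All of your combinatorics about unbalanced edges, $\kappa_0$, $\kappa_1$, graded degrees and the topology of the Brauer graph $G$ is specific to $\mathrm{gr}(A)$ of a Brauer graph algebra and is irrelevant to the quoted statement. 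If the goal is to verify Theorem~\ref{not-of-polynomial} itself, the correct route is the one in \cite{ES} (or simply the citation, as the paper does); if the goal is the main theorem of the paper, then your sketch is at least aimed at the right combinatorics, but it is answering a different question from the one posed here.
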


\begin{Rem} For the definition of primitive walks (= primitive $V$-sequences) in a special biserial algebra $A$, we refer to \cite[Section 2]{WW}. In fact, the primitive walks in $A$ are defined using the associated string algebra $\overline{A}$ and precisely correspond to the bands in $\overline{A}$.
\end{Rem}

\begin{Ex}
	Let $A=kQ/I$ be the self-injective special biserial algebra defined by the following quiver	

	$$\xymatrix@r{
		a\ar@(ul,dl)_{\alpha}\ar@(ur,dr)^{\beta}
	}$$
	\\
	and the admissible ideal $I$ generated by $\alpha^2$, $\beta^2$ and $\alpha\beta-\beta\alpha$. We can choose $\mathrm{Ba}$ for $\overline{A}$ as follows. $$\mathrm{Ba}=\{\beta^{-1}\alpha\}=\{\alpha\beta^{-1}\}.$$
	Let $Q'$ and $Q''$ be the following quivers respectively:
	$$\xymatrix@R-1em{	1	\ar@{->}@<2pt>[r]^{\zeta_1} \ar@{->}@<-2pt>[r]_{\zeta_2} &2 };\quad \quad \quad \quad 
\xymatrix@R-1em{	1	 &2 \ar@{->}@<-2pt>[l]_{\zeta_1} \ar@{->}@<2pt>[l]^{\zeta_2}}.$$
	We have two quiver homomorphisms $u$ and $u'$ from $Q'$ to $Q$ as follows.
	$$u(1)=u(2)=a, ~u(\zeta_1)=\alpha,~ u(\zeta_2)=\beta;
	u'(1)=u'(2)=a, ~u'(\zeta_1)=\beta,~ u'(\zeta_2)=\alpha.$$
	By \cite[Section 2]{WW}, we have that $u$ and $u'$ are primitive walks in $A$. They correspond to the bands  $\beta^{-1}\alpha$ and $\alpha^{-1}\beta$ in $\overline{A}$  respectively. Similarly, we also have two quiver homomorphisms from $Q''$ to $Q$ and they are primitive walks in $A$ which correspond to the bands  $\beta\alpha^{-1}$ and $\alpha\beta^{-1}$ in $\overline{A}$. According to \cite[Proposition 2.3]{WW}, all the above primitive walks are equivalent and define isomorphic band modules over $\overline{A}$.
\end{Ex}

\subsection{Brauer graph algebras and their associated graded algebras} In this subsection, we briefly recall some notions and results on Brauer graphs, Brauer graph algebras and their associated graded algebras. For more details and examples, we refer to \cite[Section 2]{S} and \cite[Section 2]{GL}.
	
Recall that a Brauer graph is denoted by $G=(V(G),E(G),m,\mathfrak{o})$, where $V(G)$ is the vertex set, $E(G)$ is the edge set, $m$ is the multiplicity function, and $\mathfrak{o}$ is the orientation of $G$. For simplicity, we often leave out the symbol $\mathfrak{o}$. In case that $G$ is a Brauer tree, the exceptional vertex and whose multiplicity will be denoted by $v_0$ and $m_{0}$, respectively.
	
The Brauer graph algebra $A$ associated with a Brauer graph $G=(V(G),E(G),m)$ has the form $kQ/I$, where the vertex set $Q_0$ of $Q$ is identified with the edge set $E(G)$ of $G$, and the arrow set $Q_1$ of $Q$ is determined by the orientation of $G$. Note that there are at most two arrows starting and ending at every vertex of $Q$. Every vertex $v\in V(G)$ such that $m(v)\mathrm{val}(v)\geq 2$ (i.e. $v$ is not truncated), gives rise to a unique cycle $C_v$ in $Q$, called a special cycle at $v$. If $G$ contains no loops, then any special cycle in $Q$ is a simple cycle (i.e. a cycle with no repeated arrows and no repeated vertices). Let $C_v$ be such a special cycle at $v$. Then if $C_v$ is a representative in its cyclic permutation class such that $t(C_v)=i=s(C_v)$, $i\in Q_0$, we say that $C_v$ is a special $i$-cycle at $v$. If a special $i$-cycle at $v$ has starting arrow $\alpha$, then we denote this special $i$-cycle at $v$ by $C_v(\alpha)$. Note that if $i\in E(G)$ is not a loop, then the special $i$-cycle at $v$ is unique and we simply write it by $C_v$.
		
The ideal $I$ in $kQ$ is generated by three types of relations:
	
Relation of the first type:
$${C_v(\alpha)}^{m(v)}-{C_{v'}(\alpha')}^{m(v')},$$
for any $i\in Q_0$ and for any special $i$-cycles $C_v(\alpha)$ and $C_{v'}(\alpha')$ at $v$ and $v'$ respectively such that both $v$ and $v'$ are not truncated.
	
Relation of the second type:
$$\alpha{C_v(\alpha)}^{m(v)},$$
for any $i\in Q_0$, any $v\in V(G)$ and where $C_v(\alpha)$ is a special $i$-cycle at $v$ with starting arrow $\alpha$.
	
Relation of the third type:
$$\beta\alpha,$$
for any $\alpha,\beta\in Q_1$ such that $\beta\alpha$ is not a subpath of any special cycle except if $\beta=\alpha$ is a loop associated with a vertex $v$ of valency one and multiplicity $m(v)>1$.
	
It is well known that Brauer graph algebras coincide with symmetric special biserial algebras. From this point of view, Bocian and Skowro\'{n}ski give a characterization of the domestic Brauer graph algebras in \cite{BS}.

\begin{Thm}[See {\cite{BS}} and \cite{ES}, or see {\cite[Theorem 5.1]{S}}] \label{Brauer-graph-domestic}
	Let $A$ be the Brauer graph algebra with a  Brauer graph $G=(V(G), E(G),m)$, where $V(G)$ is the vertex set, $E(G)$ is the edge set and $m$ is the multiplicity function of $G$. Then
	\begin{enumerate}[$(a)$]
		\item $A$ is $1$-domestic if and only if one of the following holds
		\begin{enumerate}[$(1)$]
			\item $G$ is a tree with $m(v)=2$ for exactly two vertices $v=w_0,w_1\in V(G)$ and $m(v)=1$ for all $v\in V(G)$, $v\neq w_0,w_1$.
			\item $G$ is a graph with a unique cycle of odd length and $m(v)=1$ for all $v\in V(G)$.
		\end{enumerate}
		\item $A$ is $2$-domestic if and only if $G$ is a graph with a unique cycle of even length and $m(v)=1$ for all $v\in V(G)$.
		\item There are no $n$-domestic Brauer graph algebras for $n\geq3$.
	\end{enumerate}
\end{Thm}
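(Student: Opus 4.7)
The strategy is to exploit that $\mathrm{gr}(A)$ is a special biserial algebra (a fact already recorded in the introduction) and to reduce the $n$-domestic question to counting bands in the associated string algebra $\overline{\mathrm{gr}(A)}$ of $\mathrm{gr}(A)$. For any string algebra, each $\sim_A$-equivalence class of bands produces one $1$-parameter family of band modules, and these exhaust the $1$-parameter families up to finitely many exceptions in each dimension; consequently $\mathrm{gr}(A)$ is $n$-domestic precisely when $|\mathrm{Ba}(\overline{\mathrm{gr}(A)})|=n$, and of polynomial growth precisely when this number is finite. This observation already settles the equivalence \emph{polynomial growth $\Leftrightarrow$ domestic} stated in the theorem, and reduces the rest of the classification to a purely combinatorial band count in terms of $G$.

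The key input for the counting is how the defining relations of $A$ degenerate under passage to $\mathrm{gr}(A)$. For an unbalanced edge $v_S\defeqi v_L$ with $\mathrm{grd}(v_S)<\mathrm{grd}(v_L)$, the first-type relation $C_{v_S}^{m(v_S)}-C_{v_L}^{m(v_L)}=0$ in $A$ splits in the graded algebra into the two monomial zero relations $C_{v_S}^{m(v_S)}=0$ and $C_{v_L}^{m(v_L)}=0$, since the two summands now live in distinct graded components. These extra monomial relations are exactly what the $\star$-condition and the degree-decreasing walk notion from Section~3 are designed to track. Consequently, a band in $\overline{\mathrm{gr}(A)}$ corresponds to a closed walk in the quiver of $A$ where, at every change of direction, the inserted power of a special cycle is still allowed by the \emph{new} monomial relations; translated back to $G$, this is exactly a closed walk whose anchoring vertices have equal graded degrees and whose off-skeleton excursions are all degree-decreasing.

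The main step of the proof is a case analysis on the shape of $G$, deferred to Sections~4--6. In the Brauer tree case (I.1), each unbalanced edge whose $S$-branch contains the exceptional vertex contributes $m_0-1$ distinct bands (from rotations of the special cycle at $v_0$ that can now freely terminate on the $S$-side), and each unbalanced edge pair $(i,j)$ contributes one band, namely a closed walk from $v^{(j)}_S$ through $v^{(j)}_L$ back through $v^{(i)}_S$; the total $\kappa_0(m_0-1)+\kappa_1$ is the number of bands, giving (I.1). In the tree case with two multiplicity-$2$ vertices (I.2), a band must oscillate between $w_0$ and $w_1$, which forces $\mathrm{grd}(w_0)=\mathrm{grd}(w_1)$ together with the degree-decreasing condition and yields exactly one band. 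In the cycle cases, traversing the unique cycle of $G$ yields a band iff all its vertices share a common graded degree with degree-decreasing external walks; odd length collapses the band and its inverse to a single $\sim_A$-class (giving (I.3)), while even length keeps them distinct (giving (II)).

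The principal obstacle is \emph{sharpness}: one must show that any $G$ falling outside cases (I)--(II) either has no bands at all (so $\mathrm{gr}(A)$ is representation-finite, treated in \cite{GL}) or has infinitely many bands (so $\mathrm{gr}(A)$ is not of polynomial growth), ruling out $n$-domestic for $n\geq 3$. The subtlety is that band counts jump abruptly: introducing a second independent cycle, an extra multiplicity-$2$ vertex off the main skeleton, or an additional unbalanced edge pair typically allows one to concatenate two shorter bands into infinitely many longer primitive ones, sending the count from a small finite value directly to infinity. Establishing this dichotomy rigorously requires, in each subcase, a careful enumeration of all closed substrings in $\overline{\mathrm{gr}(A)}$ and verifying that no intermediate finite band count $\geq 3$ arises; this is the technical heart of Sections~4--6.
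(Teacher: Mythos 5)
Your proposal is a proof of the wrong statement. The theorem you were asked about is the classical classification of domestic \emph{Brauer graph algebras} $A$ themselves (Bocian--Skowro\'{n}ski, building on Erdmann--Skowro\'{n}ski; the paper cites it as Theorem \ref{Brauer-graph-domestic} without proof). Its conditions involve only the underlying graph and the multiplicity function: a tree with exactly two multiplicity-$2$ vertices, or a unique cycle with $m\equiv 1$. Nothing in it refers to graded degrees, unbalanced edges, the $\star$-condition, or the invariants $\kappa_0,\kappa_1$ --- those belong to the paper's \emph{main} theorem about the associated graded algebra $\mathrm{gr}(A)$, which is what you have actually sketched. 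The confusion is visible concretely in your third paragraph: you assert that in the Brauer tree case with $\kappa_0(m_0-1)+\kappa_1=1$ the algebra is $1$-domestic, but for a Brauer tree $G$ the Brauer graph algebra $A$ is a Brauer tree algebra and hence representation-finite, so it is never $1$-domestic; that clause is case (I.1) of Theorem \ref{main-result} about $\mathrm{gr}(A)$. Likewise your ``key input'' --- that the binomial relation $C_{v_S}^{m(v_S)}-C_{v_L}^{m(v_L)}$ splits into two monomial relations --- is precisely the passage from $A$ to $\mathrm{gr}(A)$ and is irrelevant to $A$ itself, which keeps the binomial relations and is symmetric.

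A correct argument for the stated theorem would run along entirely different lines: since $A$ is self-injective special biserial, Theorem \ref{polynomial-domestic} (Erdmann--Skowro\'{n}ski) identifies domesticity of $A$ with the number of primitive walks (equivalently, bands in $\overline{A}$) being a finite positive integer, and one then counts these directly from $G$ with its original multiplicities: a tree with exactly two multiplicity-$2$ vertices $w_0,w_1$ yields a single band supported on the walk joining $w_0$ to $w_1$ (so $1$-domestic, with no condition on graded degrees); a unique cycle with $m\equiv 1$ yields one band class for odd length and two for even length; and any other infinite-type configuration (two independent cycles, three or more non-truncated ``special'' vertices of multiplicity $\geq 2$, a cycle together with an extra multiplicity, or a vertex of multiplicity $\geq 3$ on a tree with another multiplicity $\geq 2$) produces infinitely many bands by concatenation, which also rules out $n$-domestic for $n\geq 3$. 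None of the machinery of Sections 3--6 of the present paper is needed for, or applicable to, that statement.
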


Note that if $G$ is neither one of the above mentioned cases in Theorem \ref{Brauer-graph-domestic} nor a Brauer tree, then, by Theorem \ref{polynomial-domestic}, the corresponding Brauer graph algebra $A$ is not of polynomial growth.
	
We now turn to the associated graded algebras of Brauer graph algebras. The notion of the graded algebra (denoted by $\mathrm{gr}(A)$) associated to a finite dimensional algebra $A$  with the radical filtration of $A$ plays an important role in the representation theory. For the definition and elementary properties of $\mathrm{gr}(A)$, we refer to \cite[Subsection 2.2]{GL}. Recall from \cite[Subsection 2.3]{GL} that, for a Brauer graph algebra $A=kQ/I$ associated with a Brauer graph $G$, the graded algebra $\mathrm{gr}(A)$ (associated with the radical filtration) of $A$ has the same dimension with $A$ and can be described by the same quiver and some modified relations. More precisely, $\mathrm{gr}(A)=kQ/I'$, where $I'$ is an admissible ideal in $kQ$ generated by relations of the second and the third types in $I$ and modified relations of the first type in $I$. For a relation of the first type  ${C_v(\alpha)}^{m(v)}-{C_{v'}(\alpha')}^{m(v')}$ in $I$, its modified relation is defined by the term of smaller length between ${C_v(\alpha)}^{m(v)}$ and ${C_{v'}(\alpha')}^{m(v')}$.

From the above description, we know that $\mathrm{gr}(A)$ is also special biserial (but not necessarily self-injective). Thus we can reduce the study on the representation types of $A$ and $\mathrm{gr}(A)$ to that of their associated string algebras $\overline{A}$ and $\overline {\mathrm{gr}(A)}$. The string algebra $\overline{A}$ is defined by

\begin{equation}
\overline{A}=A/(\bigoplus\limits_{i\in L}\soc(Ae_i)),
\end{equation}
where $$L=\{i\in Q_{0}|\rad(Ae_{i})/\soc(Ae_{i})=V_{i,1}\oplus V_{i,2}, V_{i,1}\neq 0, V_{i,2}\neq 0\}.$$
For each $i\in L$, there is a relation $\rho_i=p_i-q_i$ of the first type in $I$, where the length of $p_i$ is $\ell(V_{i,1})+1$, the length of $q_i$ is $\ell(V_{i,2})+1$. Therefore $\overline{A}$ can be described by the same quiver $Q$ and an admissible ideal $I_1$ in $kQ$, where $I_1$ is generated by the ideal $I$ and new relations $\{p_i, q_i\mid i\in L\}$. Similarly, the string algebra $\overline {\mathrm{gr}(A)}$ is defined by
\begin{equation}
\overline {\mathrm{gr}(A)}=\mathrm{gr}(A)/(\bigoplus\limits_{i\in L'} \soc(\mathrm{gr}(A)e_{i})),
\end{equation}
where $$L'=\{i\in L|\ell(V_{i,1})=\ell(V_{i,2})\}.$$
Note that for each $i\in L'$, there is a relation $\rho_i=p_i-q_i$ in $I'$ such that $p_i$ and $q_i$ have the same length. Therefore $\overline {\mathrm{gr}(A)}$ can be described by the same quiver $Q$ and an admissible ideal $I_2$ in $kQ$, where $I_2$ is generated by the ideal $I'$ and new relations $\{p_i, q_i\mid i\in L'\}$.

As a conclusion, the four concerned algebras have the same quiver and the following displayed formulas:
$$A=kQ/I,\quad \overline{A}=kQ/I_1,\quad \mathrm{gr}(A)=kQ/I',\quad \overline{\mathrm{gr}(A)}=kQ/I_2.$$
In order to describe some relationships among these algebras, we further generalize some notions from \cite{GL}.

\begin{Def}[Compare with {\cite[Definition 2.12]{GL}}] \label{unbalanced-edge} Let $G=(V(G), E(G), m)$ be a Brauer graph with graded degree function $\mathrm{grd}$ and $A=kQ/I$ the corresponding Brauer graph algebra. We identify $Q_0$ with $E(G)$ by the natural bijection between them.
\begin{enumerate}[$(1)$]
\item For an unbalanced edge $u\defeqi v$ in $G$, we denote the endpoints of $i$ by $v_L^{(i)}$, $v_S^{(i)}$ with $\mathrm{grd}(v_L^{(i)})>\mathrm{grd}(v_S^{(i)})$. Whenever the context is clear we will omit the superscript $(i)$. Moreover, we define
\begin{equation}
n_i =\mbox{ the number of edges in }G_{i,S},
\end{equation}
where $G_{i,S}$ is the connected branch of $G\setminus i$ containing the vertex $v_{S}$.
\item For an unbalanced edge $v_S\defeqi v_L$ in $G$, there is a relation of the first type $\rho_i=p_i-q_i$ in $I$, where $p_i=C_{v_S}^{m(v_S)}$,   $q_i=C_{v_L}^{m(v_L)}$ are paths with lengths $\mathrm{grd}(v_{S})$, $\mathrm{grd}(v_{L})$ respectively. We define the following sets:
\begin{equation}
\mathbb{W}=\{i\in Q_{0}\vert \rad(A e_{i})/\soc(A e_{i})=V_1 \oplus V_2, V_1\neq0, V_2\neq0,\ell(V_1)\neq\ell(V_2)\}\subseteq Q_0,
\end{equation}
\begin{equation}
\mathbb{P}=\bigcup\limits_{i\in \mathbb{W}}\{r_i|r_i\mbox{ is the longer path between }p_i\mbox{ and }q_i\}.
\end{equation}
\end{enumerate}
\end{Def}

Note that the set of unbalanced edges is identified with $\mathbb{W}$ under the natural bijection between $Q_0$ and $E(G)$, and that $s(r_i)=t(r_i)=i$ for $r_{i}\in \mathbb{P}$.	

By the definitions of $\overline{A}$ and $\overline{gr(A)}$, we have that $\overline{A}$ is a quotient algebra of $\overline{\mathrm{gr}(A)}$, that is, $\overline{A}\cong\overline{\mathrm{gr}(A)}/I_3$, where the ideal $I_3$ is the $k$-vector space with basis given by the paths in the set $\mathbb{P}$. In particular, any indecomposable $\mathrm{gr}(A)$-module that is not $\overline{\mathrm{gr}(A)}$-module is an indecomposable projective-injective $\mathrm{gr}(A)$-module.

For convenience, we record displayed formulas of the ideals $I,I',I_1, I_2, I_3$ in $kQ$ (see \cite[Subsection 3.2]{GL}):
$$R_1:=\{\mbox{Relation of the first type in }I\},\quad I_0:=\langle \mbox{Relation of the second type or the third type in }I\rangle;$$
$$I=I_0+\langle R_1\rangle;$$
$$I'=I_0+\langle p_i-q_i\in R_1 \mid i\in Q_0, i\notin\mathbb{W}\rangle+\langle q_i\mid i\in \mathbb{W}, p_i-q_i\in R_1, q_i\mbox{ is shorter than }p_i\rangle;$$
$$I_1=I_0+\langle p_i,q_i\mid i\in Q_0, p_i-q_i\in R_1\rangle;$$
$$I_2=I_0+\langle p_i,q_i\mid i\in Q_0, i\notin\mathbb{W}, p_i-q_i\in R_1\rangle+\langle q_i\mid i\in \mathbb{W}, p_i-q_i\in R_1, q_i\mbox{ is shorter than }p_i\rangle;$$
$$I_3=\langle r_i\in \mathbb{P}\mid i\in \mathbb{W}, p_i-q_i\in R_1\rangle=k\mbox{-vector space with basis }\{r_i\in \mathbb{P}\mid i\in \mathbb{W}\}.$$

The following proposition describes when $\mathrm{gr}(A)$ and $A$ are isomorphic.
	
\begin{Prop}[{\cite[Proposition 2.13]{GL}}] \label{isomorphism}
	Let $A=kQ/I$ be a Brauer graph algebra associated with a Brauer graph $G$ and $\mathrm{gr}(A)$ the associated graded algebra of $A$. Then the following statements are equivalent.
	
	\begin{enumerate}[$(1)$]
		\item $A$ is isomorphic to $\mathrm{gr}(A)$ as algebras.
		\item The vertices in the Brauer graph $G$ have the same graded degree.
		\item $\mathbb{W}$ (resp. $\mathbb{P}$) is an empty set.
	\end{enumerate}
\end{Prop}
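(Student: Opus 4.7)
The plan is to prove $(2)\Leftrightarrow(3)$ by unwinding the definitions, $(3)\Rightarrow(1)$ by direct comparison of ideals, and $(1)\Rightarrow(3)$ via the self-injectivity of $A$. The first two are soft; the last is the substantive point.

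For $(2)\Leftrightarrow(3)$, I first identify $\mathbb{W}$ with the set of unbalanced edges. For an edge $i$ with endpoints $u,v$, the two arm lengths of $\rad(Ae_i)/\soc(Ae_i)$ equal $\mathrm{grd}(u)-1$ and $\mathrm{grd}(v)-1$, both positive precisely when neither endpoint is truncated; and any edge incident to a truncated vertex $v$ is automatically balanced, since $\mathrm{grd}(v)=\mathrm{grd}(v')$. Connectedness of $G$ then upgrades ``no unbalanced edges'' to ``all vertices share the same graded degree'' by an induction along any walk. For $(3)\Rightarrow(1)$, when $\mathbb{W}=\emptyset$ the displayed formula for $I'$ collapses term-by-term to that of $I$, so $A=kQ/I=kQ/I'=\mathrm{gr}(A)$ on the nose.

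The substantive direction $(1)\Rightarrow(3)$ uses that $A$ is symmetric, hence any algebra isomorphic to $A$ is self-injective; in particular every indecomposable projective $\mathrm{gr}(A)$-module must have simple socle. I would argue the contrapositive: if $\mathbb{W}\neq\emptyset$, fix $i\in\mathbb{W}$ with $\mathrm{grd}(v_S)<\mathrm{grd}(v_L)$. Replacing the first-type relation $p_i-q_i$ by only its shorter term $p_i=C_{v_S}^{m(v_S)}$ (set to zero) truncates the $v_S$-arm of $\mathrm{gr}(A)e_i$ one step before it would close back to vertex $i$, while $q_i=C_{v_L}^{m(v_L)}\neq 0$ so the $v_L$-arm still closes at $i$. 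The two terminal elements of the two arms are annihilated by every outgoing arrow and are linearly independent (they are paths of different lengths starting at $i$), producing a socle of $k$-dimension at least $2$; this contradicts self-injectivity and forces $\mathbb{W}=\emptyset$.

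The main obstacle is verifying cleanly that the two terminal elements really survive as independent socle vectors: no relation of types~2 or~3, nor any first-type relation at an adjacent vertex, should identify them or annihilate one of them. This should be a straightforward consequence of the special biserial presentation $\mathrm{gr}(A)=kQ/I'$ and the explicit list of generators of $I'$ recorded in the excerpt, but it is where I expect the bulk of the bookkeeping to live.
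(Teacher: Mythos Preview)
The paper does not prove this proposition here; it is quoted from \cite[Proposition~2.13]{GL}, so there is no in-paper argument to compare against. Your proof is correct and self-contained: the identification of $\mathbb{W}$ with the unbalanced edges and the collapse $I'=I$ when $\mathbb{W}=\emptyset$ are exactly as the paper's displayed generators of $I,I'$ dictate, and the contrapositive via self-injectivity for $(1)\Rightarrow(3)$ is the natural route.

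The one place you flag as an obstacle---whether the two terminal elements of $\mathrm{gr}(A)e_i$ really survive as independent socle vectors---is in fact the easiest step, and no bookkeeping is needed. Since every generator of $I'$ is homogeneous by construction, $\mathrm{gr}(A)$ is graded by path length; your two elements sit in degrees $\mathrm{grd}(v_S)-1$ and $\mathrm{grd}(v_L)$, so linear independence is automatic. That each is nonzero follows from $\dim_k\mathrm{gr}(A)=\dim_k A$ (recorded in the paper), which forces $\dim_k\mathrm{gr}(A)e_i=\dim_k Ae_i$ and hence forces the $v_L$-arm to reach its full length $\mathrm{grd}(v_L)$; equivalently, the paper's identity $\overline{A}\cong\overline{\mathrm{gr}(A)}/I_3$ with $I_3$ the span of the longer paths $r_i$ already says $r_i=C_{v_L}^{m(v_L)}\neq 0$ in $\mathrm{gr}(A)$. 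That each terminal element is annihilated by every outgoing arrow is exactly one type-2 or type-3 relation together with the new relation $C_{v_S}^{m(v_S)}=0$, and homogeneity rules out any interference from first-type relations at adjacent vertices.
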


\section{$\star$-condition, unbalanced edge pair and main result}
	
Throughout this section, we assume that $A=kQ/I$ is a Brauer graph algebra associated with a Brauer graph $G=(V(G),E(G),m)$ and that $\mathrm{gr}(A)=kQ/I'$ its associated graded algebra. Moreover, let $\overline{A}=kQ/I_1$ and $\overline{\mathrm{gr}(A)}=kQ/I_2$ be the associated string algebras in (2.1) and (2.2), respectively. Note that by definition, $A$ and $\overline{A}$ (resp. $\mathrm{gr}(A)$ and $\overline{\mathrm{gr}(A)}$) have the same representation type. In this section, we will define useful notions and state our main results on the infinite representation type of $\mathrm{gr}(A)$.
	
\subsection{$\star$-Condition in a Brauer graph}

\begin{Def}[Compare with {\cite[Definition 3.7]{GL}}] \label{walk-in-graph} Let $u, v$ be two distinct vertices in a Brauer graph $G$.
\begin{enumerate}[$(1)$]
\item A walk from $u$ to $v$ is a sequence $[v_{1},a_{1},v_{2},\ldots, v_{k-1}, a_{k-1},v_{k}]$ of vertices and edges, where $v_{1}=u$, $v_{k}=v$, $a_i$ is an edge incident to the vertices $v_i$ and $v_{i+1}$ for each $1\leq i\leq k-1$, and all vertices (and hence all edges) are pairwise distinct. We often simply write this walk by $[a_{1},\ldots,a_{k-1}]$ and call it walk from edge $a_{1}$ to edge $a_{k-1}$. In particular, when $G$ is a tree, the walk from $u$ to $v$ is unique.
\item The length of a walk from $u$ to $v$ is defined to be the number of edges in this walk and will be denoted by $d_G(u,v)$ whenever the context is clear.
\item We say that a walk $[v_{1},a_{1},v_{2},\ldots, v_{k-1}, a_{k-1},v_{k}]$ is degree decreasing if $\mathrm{grd}(v_{1})\geq \mathrm{grd}(v_{2})\geq\cdots\geq \mathrm{grd}(v_{k})$.
\end{enumerate}
\end{Def}

\begin{Rem} 
		The definition of a walk in Brauer graph is different from the definition of a walk in graph theory, actually, any walk in Brauer graph is identified with a path in graph theory. 
\end{Rem}

\begin{Rem} \label{finite-representation-type}	
According to \cite{GL}, $\mathrm{gr}(A)$ is of finite representation type if and only if $G$ is a Brauer tree with an exceptional vertex $v_0$ of multiplicity $m_0$ such that any walk starting from a specified vertex $v_h$ is degree decreasing, where $v_h$ is defined to be $v_{0}$ when $m_0>1$ or one of the vertices with maximal graded degree when $m_0=1$.
\end{Rem}

In order to generalize our description from finite representation type to infinite representation type, we introduce the following condition on any Brauer graph.

\begin{Def} \label{star-condition} Let $G$ be a Brauer graph and $v_S\defeqi v_L$ an unbalanced edge in $G$. We say that $G$ satisfies $\star$-condition with respect to $v_S\defeqi v_L$ if the following three conditions hold:
\begin{enumerate}[{$(1)$}]		
\item $G_{i,S}\neq G_{i,L}$.
\item $G_{i,S}$ is a tree with $m(v)=1$ for all $v\in V(G_{i,S})$.
\item The unique walk from $v_S$ to any vertex $v_{k}$ in $G_{i,S}$ is degree decreasing.
\end{enumerate}
\end{Def}

\begin{Rem}	
\begin{enumerate}[$(1)$]
	\item $G_{i,L}=G_{i,S}$ for an unbalanced edge $v_S\defeqi v_L$ in a Brauer graph $G$ if and only if $i$ is an edge in some cycle of $G$ if and only if there is another walk from $v_L$ to $v_S$ different from $[i]$. 
    \item By \cite[Theorem 4.5]{GL}, we can formulate the finite representation type using $\star$-condition as follows: $\mathrm{gr}(A)$ is of finite representation type if and only if $G$ is a Brauer tree which satisfies $\star$-condition with respect to any unbalanced edge in $G$. 
\end{enumerate}
\end{Rem}

\begin{Def}[Compare with {\cite[Definition 3.9]{GL}}] \label{simple-string} Let $c_n\ldots c_1$ be a string in $\overline{\mathrm{gr}(A)}$. We say that $c_n\ldots c_1$ is a simple string in $\overline{\mathrm{gr}(A)}$ from $s(c_1)$ to $t(c_n)$ if all $s(c_k)$ are pairwise distinct and $t(c_n)$ is different from $s(c_k)$ for each $1\leq k\leq n$.
\end{Def}
	
\begin{Rem} \label{construct-simple-string}	Similarly as in the proof of \cite[Lemma 3.8]{GL}, for any walk of length $\geq 2$ in $G$ we can get exactly two simple strings in $\overline{\mathrm{gr}(A)}$ corresponding to this walk.
\end{Rem}

We now generalize some results for Brauer tree algebras in \cite{GL} to Brauer graph algebras.

\begin{Lem}[Compare with {\cite[Lemma 3.10]{GL}}] \label{subword-cycle-1} 
	Let $\overline {\mathrm{gr}(A)}=kQ/I_{2}$ and $C=c_n\ldots c_2 c_1$ a string in $\overline{\mathrm{gr}(A)}$, where $s(C)=t(C)=i$. We denote by $u\defeqi v$ the corresponding edge in $G$. If $c_1$ or $c^{-1}_1$ lies in a special cycle at $v$ and $G_{i,v}\neq G_{i,u}$, then $C$ has a substring $C'=c_s\ldots c_2 c_1$ satisfying $s(C')=t(C')=i$ such that the edges  corresponding to vertices $s(c_k)$ ($2\leq k\leq s$) lie in  $G_{i,v}$. Moreover, if $G_{i,v}$ is a tree, then $C'$ has a substring $C_1$ such that $s(C_1)=t(C_1)$ and that $C_1$ or ${C_1}^{-1}$ is a directed string.
\end{Lem}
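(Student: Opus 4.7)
My strategy is to associate to $C = c_n \cdots c_1$ a walk on $V(G)$: for each letter $c_k$ (arrow or its formal inverse), the \emph{color} $w_k \in V(G)$ is the common $G$-vertex of the edges $e_{s(c_k)}$ and $e_{t(c_k)}$---equivalently, the vertex around which the underlying arrow is defined in the cyclic ordering. The essential dichotomy, which I expect to be the main technical point, is: $w_k = w_{k+1}$ if and only if $c_k$ and $c_{k+1}$ are of the same type (both arrows, or both inverse arrows); this case forces the two-letter substring into a single special cycle via the third-type relations inherited by $\overline{\mathrm{gr}(A)}$. Conversely, $w_k \neq w_{k+1}$ corresponds to $(c_{k+1}, c_k)$ being a peak or a valley, using that at any vertex of $Q$ the at most two incoming (resp.\ outgoing) arrows belong to the two distinct special cycles at the two endpoints of the underlying $G$-edge, together with the string condition $c_{k+1} \neq c_k^{-1}$ (loops do not arise in the tree setting of the moreover part, so there is no ambiguity).

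For the first assertion I take $s = \min\{k \geq 1 \mid t(c_k) = i\}$, which exists because $t(C) = i$, and set $C' := c_s \cdots c_1$, so that $s(C') = t(C') = i$ is automatic. Then I prove $e_{s(c_k)} \in E(G_{i,v})$ for every $2 \leq k \leq s$ by induction on $k$. For the base case, $w_1 = v$ since $c_1$ or $c_1^{-1}$ lies in the special cycle at $v$, so $e_{t(c_1)}$ has $v$ as an endpoint and by minimality of $s$ is not $i$, hence lies in $E(G_{i,v})$. For the step, if $e_{s(c_k)} \in E(G_{i,v})$ then both its endpoints lie in $V(G_{i,v})$, in particular $w_k$; the edge $e_{t(c_k)}$ then also has $w_k \in V(G_{i,v})$ as an endpoint. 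The hypothesis $G_{i,v} \neq G_{i,u}$ forces $i$ to be a bridge and $V(G_{i,v}) \cap V(G_{i,u}) = \emptyset$, so the unique edge of $G$ with endpoints on both sides is $i$ itself; since $k < s$, minimality of $s$ excludes $e_{t(c_k)} = i$, and hence $e_{t(c_k)} \in E(G_{i,v})$.

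For the moreover part, assume $G_{i,v}$ is a tree and form the color sequence $v = w_1, w_2, \ldots, w_s = v$ attached to $C'$; by the first part every $w_k$ lies in $V(G_{i,v})$, and collapsing consecutive repeats yields a closed walk $v = u_0, u_1, \ldots, u_r = v$ in the tree. If $r = 0$ every adjacent pair is a stay at $v$, so by the dichotomy all the $c_k$ are of the same type and $C_1 := C'$ is already directed or inverse with $s(C_1) = t(C_1) = i$. Otherwise $r \geq 2$; choose $j'$ such that $u_{j'}$ attains the maximum tree-distance from $v$ among $u_0, \ldots, u_r$. Since $u_0 = u_r = v$ and this maximum is $\geq 1$, necessarily $1 \leq j' \leq r-1$, and because in the rooted tree $(G_{i,v}, v)$ every vertex has a unique neighbor closer to the root, $u_{j'-1} = u_{j'+1}$. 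Pulling back to the unreduced sequence, there exist indices $a < b$ with $w_a = u_{j'-1}$, $w_{a+1} = \cdots = w_b = u_{j'}$, and $w_{b+1} = u_{j'-1}$, so the block $c_{a+1}, \ldots, c_b$ consists of stays and, by the dichotomy, is of constant type. Setting $C_1 := c_b \cdots c_{a+1}$ then gives a directed or inverse substring of $C'$. Finally, its endpoint $Q$-vertices $s(C_1) = t(c_a)$ and $t(C_1) = t(c_b)$ correspond to edges of $G$ whose endpoint sets are respectively $\{w_a, w_{a+1}\} = \{u_{j'-1}, u_{j'}\}$ and $\{w_b, w_{b+1}\} = \{u_{j'}, u_{j'-1}\}$; tree-ness of $G_{i,v}$ (no multi-edges) forces these to be the same edge, and hence $s(C_1) = t(C_1)$ in $Q$.
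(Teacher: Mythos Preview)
Your argument is correct. The paper itself gives no details here: it declares the first assertion ``an obvious consequence of $G_{i,v}\neq G_{i,u}$'' and for the second simply refers back to \cite[Lemma~3.10]{GL}. Your proof supplies precisely those details, via the natural device of attaching to each letter $c_k$ its ``color'' $w_k\in V(G)$ (the vertex whose special cycle contains the underlying arrow) and then reading the string as a walk on $V(G)$. This is the standard way to translate strings over a Brauer graph algebra into combinatorics on $G$, and is almost certainly what \cite{GL} does; so your approach is not different in spirit, only more explicit than the present paper. A couple of points you leave implicit but which are indeed routine: that $w_s=v$ (since $w_s$ is a common endpoint of $e_{s(c_s)}\in E(G_{i,v})$ and $e_{t(c_s)}=i$, hence lies in $\{u,v\}\cap V(G_{i,v})=\{v\}$), that consecutive distinct colors $u_j,u_{j+1}$ are adjacent in $G_{i,v}$ (both are endpoints of $e_{t(c_a)}\in E(G_{i,v})$ at the block boundary), and that $r=1$ is impossible (the collapse forces $u_0\neq u_1$, but $u_1=u_r=v=u_0$). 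With these noted, the maximum-distance vertex argument and the no-multi-edges step go through exactly as you wrote.
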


\begin{proof} The first result is an obvious consequence of $G_{i,v}\neq G_{i,u}$. The  proof  of the second result is identical to \cite[Lemma 3.10]{GL}.
\end{proof}

\begin{Lem}[Compare with {\cite[Lemma 4.3]{GL}}]\label{not-star-condition} 
Let $G$ be a Brauer graph and $\overline {\mathrm{gr}(A)}=kQ/I_{2}$. Suppose that $C=c_n\ldots c_{l}\ldots c_1$ is a string in $\overline{\mathrm{gr}(A)}$ satisfying $l<n$ and that $c_l\ldots c_1$ or $c_1^{-1}\ldots c_l^{-1}$ is an element of $\mathbb{P}$, where $s(c_1)=t(c_n)=i$.  We denote by $v_S\defeqi v_L$ the corresponding unbalanced edge in $G$. If $G_{i,S}\neq G_{i,L}$ and $G_{i,S}$ is a tree, then at least one of the following holds.
\begin{enumerate}[$(1)$]
	\item There is a vertex $v$ with $m(v)\geq 2$ in $G_{i,S}$.
	\item There are some adjacent vertices $v$, $w$ in $G_{i,S}$, such that $d_{G}(v,v_{S})+1=d_{G}(w,v_{S})$ and $\mathrm{grd}(v)<\mathrm{grd}(w)$.
\end{enumerate}	
In other words,  $G$ does not satisfy $\star$-condition with respect to $v_S\defeqi v_L$.	
\end{Lem}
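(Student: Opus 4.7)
The plan is to prove the contrapositive: assume $G$ satisfies the $\star$-condition with respect to $v_S\defeqi v_L$, so that $G_{i,S}$ is a tree, every vertex of $G_{i,S}$ has multiplicity one, and every walk from $v_S$ in $G_{i,S}$ is degree decreasing; I will show that no string $C$ of the stated form can exist. Without loss of generality, take $c_l\cdots c_1=r_i=C_{v_L}(\alpha_L)^{m(v_L)}$, writing $\alpha_L=c_1,\beta_L=c_l$ for the starting and ending arrows of the special cycle at $v_L$ (and $\alpha_S,\beta_S$ analogously at $v_S$); the case $c_1^{-1}\cdots c_l^{-1}\in\mathbb{P}$ is symmetric after replacing $C$ by $C^{-1}$.

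The first step is to pin down the letter $c_{l+1}$. The only letters in $\overline{\mathrm{gr}(A)}$ starting at the vertex $i$ are $\alpha_L,\alpha_S,\beta_L^{-1},\beta_S^{-1}$. The choice $\alpha_L$ creates the second-type relation $\alpha_L\cdot r_i$; the choice $\alpha_S$ creates the third-type relation $\alpha_S\beta_L$; and $\beta_L^{-1}=c_l^{-1}$ is forbidden by the string axiom. Hence $c_{l+1}=\beta_S^{-1}$. Applying Lemma \ref{subword-cycle-1} to the substring $D=c_n\cdots c_{l+1}$, using that $c_{l+1}^{-1}=\beta_S$ lies in the special cycle at $v_S$ and that $G_{i,S}\neq G_{i,L}$, I obtain a closed substring $D'$ at $i$ whose intermediate vertices (as edges of $G$) lie in $G_{i,S}$. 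Since $G_{i,S}$ is a tree, the second half of the lemma yields a further substring $D_1$ with $s(D_1)=t(D_1)$ such that $D_1$ or $D_1^{-1}$ is a directed string; so $D_1=C_w(\gamma)^t$ for some non-truncated vertex $w$ of $G$ and some $t\geq 1$.

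Next I perform a case analysis on $j'=s(D_1)$ and $w$ under the $\star$-condition. If $j'=i$, the case $w=v_L$ is excluded because the intermediate vertices of $D_1$ would then lie in $G_{i,L}$, contradicting Lemma \ref{subword-cycle-1}, while $w=v_S$ forces $t<m(v_S)=1$ since $C_{v_S}^{m(v_S)}=p_i$ is the shorter term of the first-type relation at the unbalanced edge $i$ and thus lies in $I_2$. If $j'\in V(G_{i,S})$ then $w\in V(G_{i,S})$, and $w\neq v_S$ lest $D_1$ traverse $i$ as an intermediate vertex. When $j'$ is balanced, or $j'$ is unbalanced with $w$ its shorter endpoint, the relation $C_w^{m(w)}\in I_2$ forces $t<m(w)=1$, impossible. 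The sole surviving case is that $j'$ is unbalanced with $w=v_L^{(j')}$ the longer endpoint, $m(w)=1$, and $D_1=C_w=p_{j'}$.

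Finally, I dispose of this remaining case by the geometry of the tree $G_{i,S}$. If the other endpoint $v_S^{(j')}$ of $j'$ is closer to $v_S$ than $w$, the unique walk $v_S\to\cdots\to v_S^{(j')}\to w$ has a final step with $\mathrm{grd}(v_S^{(j')})<\mathrm{grd}(w)$, directly violating $\star$-condition (3). Otherwise $w$ is the closer endpoint, and the configuration of $D_1=r_{j'}$ inside $D'$ mirrors that of $r_i$ inside $C$; iterating the entire analysis with $j'$ and $v_L^{(j')}$ in the roles of $i$ and $v_L$, and with the subtree of $G_{i,S}$ on the far side of $j'$ in the role of $G_{i,S}$, produces a properly shorter closed substring of $D'$ satisfying the same hypotheses. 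Since $G_{i,S}$ is a finite tree, this descent must terminate, yielding a contradiction. The main obstacle is justifying this recursive step rigorously: one must verify that the subtree beyond $w$ inherits the $\star$-condition (which it does, because degree-decreasing walks from $v_S$ restrict to degree-decreasing walks from any interior vertex) and that the local structure of $D'$ around $D_1$ supplies exactly the context required to reapply Lemma \ref{subword-cycle-1}.
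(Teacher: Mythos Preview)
Your approach is essentially the same as the paper's, which simply invokes the analogous Brauer-tree result \cite[Lemma 4.3]{GL} together with Lemma~\ref{subword-cycle-1}; you have reconstructed that contrapositive argument explicitly, extracting the closed directed substring $D_1$ and case-analysing its location. The recursion you flag is exactly the descent used in \cite{GL}, and your outline of why the subtree beyond $j'$ inherits the $\star$-condition and why $D'$ must return to $j'$ before reaching $i$ is the correct justification.

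One minor slip: your claim ``$C_w^{m(w)}\in I_2$'' in the balanced sub-case fails when the other endpoint $u$ of $j'$ is truncated (a leaf with $m(u)=1$), since then $j'\notin L'$ and the socle $C_w$ survives in $\overline{\mathrm{gr}(A)}$. The conclusion is still correct, however: with $u$ truncated the only letters at $j'$ come from $C_w$, and both one-step extensions of $D_1$ produce $\alpha_w C_w$ or $C_w\beta_w$, each of which contains a second-type relation (the latter because $C_w\beta_w=\beta_w C_w'$ with $C_w'$ the rotated special cycle at $w$). Hence $D_1$ cannot be extended inside $D'$, forcing $D_1=D'$ and $j'=i$, a case you have already disposed of.
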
	

\begin{proof} Noting that {\cite[Lemma 3.4]{GL}}, {\cite[Lemma 4.1]{GL}} and Lemma \ref{subword-cycle-1}, we have an identical proof to \cite[Lemma 4.3]{GL}.
\end{proof}

The first statement in the following result generalizes \cite[Lemma 3.5]{GL}, and the second one generalizes \cite[Lemma 5.1]{GL} and \cite[Proposition 5.4]{GL}, both are stated in the Brauer tree case in \cite{GL}.
	
\begin{Lem} \label{finite-number} \begin{enumerate}[$(1)$] 
\item Let $E$ be a set consisting of some unbalanced edges in $G$, and for each $i\in E$, let $r_i$ be the element in $\mathbb{P}$ corresponding to the unbalanced edge $i$, where $\mathbb{P}$ is defined in $(2.5)$. If $C$ is a string in $\overline{\mathrm{gr}(A)}$ and $C$ is not a string in $\overline{\mathrm{gr}(A)}/(\bigoplus_{i\in E}r_i)$, then, there exists  $i\in E$ such that  $C$ or $C^{-1}$ has a substring $r_i$. In particular, if $C$ is a string in $\overline{\mathrm{gr}(A)}$ and $C$ is not a string in $\overline{A}$, then $C$ or $C^{-1}$ has a substring lying in the set $\mathbb{P}$.
\item Suppose that $C=c_n\ldots c_{l}\ldots c_1$ is a string in $\overline{\mathrm{gr}(A)}$ satisfying $l<n$ and that $c_l\ldots c_1$ or $c_1^{-1}\ldots c_l^{-1}$ is an element of $\mathbb{P}$, where $s(c_1)=t(c_l)=i$ and $\mathbb{P}$ is defined in $(2.5)$. Denote by $v_S\defeqi v_L$ the corresponding unbalanced edge in $G$. If $G$ satisfies $\star$-condition with respect to $v_S\defeqi v_L$, then 
\begin{enumerate}[$(2.1)$]
	\item $c_n\ldots c_{l+1}$ is a simple substring of $C$ such that $t(c_k)$ is in $G_{i,S}$ for each $l+1\leq k \leq n$ and in particular the string $C$ is not a band in $\overline{\mathrm{gr}(A)}$;
    \item the number of strings $C$ in  $\mathrm{St}(\overline{\mathrm{gr}(A)})$  which contain a substring $r_i$ or $r_i^{-1}$ is equal to $(n_{i}+1)^{2}$, where $n_i$ is the number of edges in $G_{i,S}$.
\end{enumerate}
\item Let $E$ be a set consisting of some unbalanced edges in $G$, and for each $i\in E$, let $r_i$ be the element in $\mathbb{P}$ corresponding to the unbalanced edge $i$. If $G$ satisfies $\star$-condition with respect to any unbalanced edge in $E$,  then  $\overline{\mathrm{gr}(A)}$ and  $\overline{\mathrm{gr}(A)}/(\bigoplus_{i\in E}r_i)$ have the same representation type. In particular, if  $E$ is a set of all unbalanced edges in $G$ and $G$ satisfies $\star$-condition with respect to any unbalanced edge, then, by the relationship $\overline{A}\cong\overline{\mathrm{gr}(A)}/(\bigoplus_{i\in E}r_i)$, $\overline{\mathrm{gr}(A)}$ and $\overline{A}$ have the same representation type.
\end{enumerate}
\end{Lem}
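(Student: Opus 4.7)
The three parts chain together: part (1) is a bookkeeping step, (2.1) carries the technical weight, (2.2) is a counting consequence, and (3) combines them to compare representation types. For part (1), I would unpack definitions: the ideal defining $\overline{\mathrm{gr}(A)}/(\bigoplus_{i\in E}r_i)$ is obtained from $I_2$ by adjoining the paths $\{r_i:i\in E\}$ as generators, so a string $C$ in $\overline{\mathrm{gr}(A)}$ fails to be a string in the quotient precisely when some subword of $C$ or of $C^{-1}$ coincides with one of the new generators, that is, with some $r_i$ for $i\in E$. The \emph{in particular} statement is then an immediate consequence of $\overline{A}\cong\overline{\mathrm{gr}(A)}/I_3$ with $I_3=\langle r_i:i\in\mathbb{W}\rangle$, as recorded just before the lemma.

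For part (2.1), I would analyze what can follow the directed path $r_i=c_l\cdots c_1=C_{v_L}^{m(v_L)}$ at its target end. The letter $c_{l+1}$ cannot be an arrow: otherwise $c_{l+1}c_l$ would have to be a subpath of a special cycle (else it is a relation of the third type in $I_2$), so $c_{l+1}$ must continue the cycle at $v_L$, but then $c_{l+1}r_i$ is the second-type relation $\alpha C_{v_L}(\alpha)^{m(v_L)}\in I_2$. Hence $c_{l+1}$ is an inverse arrow, and its underlying arrow must end at $i$ and differ from $c_l$ (to avoid the forbidden cancellation $c_{l+1}=c_l^{-1}$), leaving only the last arrow of the special cycle at $v_S$. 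Iterating, $c_n\cdots c_{l+1}$ traces a walk in $G_{i,S}$ starting at $v_S$, running ``backwards'' through special cycles via inverse arrows. Under $\star$-condition, $G_{i,S}$ is a tree with trivial multiplicities and degree-decreasing walks from $v_S$; any self-intersection of the extension would, by the arguments underlying Lemma \ref{not-star-condition} together with Lemma \ref{subword-cycle-1} and the cited \cite[Lemma 3.4]{GL} and \cite[Lemma 4.1]{GL}, manufacture either an unwanted cycle, a vertex of multiplicity $\geq 2$, or an unbalanced edge oriented against the decreasing order, contradicting $\star$-condition. Simplicity follows, and the band claim is immediate: were $C$ a band, some rotation of $C$ would be a closed string containing $r_i$ at its right end with $s=t=i$, which triggers Lemma \ref{not-star-condition} and contradicts $\star$-condition.

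For part (2.2), any string in $\mathrm{St}(\overline{\mathrm{gr}(A)})$ containing $r_i$ or $r_i^{-1}$ as a substring can, after choosing the representative containing $r_i$, be uniquely written $C=C_R\cdot r_i\cdot C_L$ with $C_L$ (possibly empty) ending at $i$ and $C_R$ (possibly empty) starting at $i$. The symmetric version of the analysis in (2.1), applied at the source end of $r_i$, shows that $C_L$ and $C_R$ are simple walks in $G_{i,S}$ starting at $v_S$ (the arrow extension at the source is excluded because it would complete the second-type relation $\alpha_p\cdot C_{v_L}(\alpha_p)^{m(v_L)}$). Since $G_{i,S}$ is a tree with $n_i+1$ vertices, each walk from $v_S$ is uniquely determined by its endpoint, yielding $n_i+1$ choices for each of $C_L$ and $C_R$ and hence $(n_i+1)^2$ strings. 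Part (3) then follows: by (2.1) every string containing some $r_i$ ($i\in E$) as substring is not a band, and by (2.2) there are only finitely many such strings, so passing to the quotient removes finitely many string modules and no band modules; conversely, no power $B^n$ of a band $B$ in $\overline{\mathrm{gr}(A)}$ can contain any $r_i$ as substring, for then the extension of $r_i$ inside $B^{2n},B^{3n},\ldots$ would be an arbitrarily long simple walk in the finite tree $G_{i,S}$. The sets of bands coincide, the sets of strings differ by finitely many, and the representation type of a string algebra is determined by its bands together with the cardinality (finite or infinite) of its strings; so the two algebras share the same representation type, and the in-particular statement follows from $\overline{A}\cong\overline{\mathrm{gr}(A)}/I_3$.

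The main obstacle I anticipate is making the simplicity assertion in (2.1) fully rigorous: one must show that the interplay of tree-ness, trivial multiplicities, and degree-decreasing walks from $v_S$ genuinely forbids the extension $c_n\cdots c_{l+1}$ from ever revisiting a vertex of $G_{i,S}$. Several local configurations — loops, valency-one endpoints, the boundary of $G_{i,S}$, and the transition point where the string leaves the cycle at $v_L$ and enters $G_{i,S}$ through an inverse arrow — must be checked individually, in a case analysis parallel to the proof of Lemma \ref{not-star-condition} but carried out in the open (extending) rather than closed (returning) setting.
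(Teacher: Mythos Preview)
Your proposal is correct and follows essentially the same route as the paper: part (1) is unwound from the ideal description, part (2.1) is reduced to a contradiction with the $\star$-condition via Lemma~\ref{not-star-condition}, part (2.2) counts simple walks in the tree $G_{i,S}$ (the paper simply cites \cite[Proposition~5.4]{GL}), and part (3) combines these to match bands and count the excess strings. The one stylistic difference worth noting is that the paper's argument for (2.1) is more economical than yours: rather than first analysing why $c_{l+1}$ must be an inverse arrow and then iterating, the paper goes straight to the contradiction --- if some $t(c_m)=i$ with $m>l$ minimal, then the closed substring $c_m\cdots c_1$ feeds directly into Lemma~\ref{not-star-condition}, violating $\star$-condition; the simplicity step is then ``repeat the similar proof'' on the closed piece $c_k\cdots c_{t+1}$. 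So the obstacle you anticipated (a configuration-by-configuration case analysis) is sidestepped in the paper by packaging everything into a single appeal to Lemma~\ref{not-star-condition}, which already encodes the needed casework.
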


\begin{proof}
(1) Since the $k$-vector space $\bigoplus_{i\in E}r_i$ is an ideal of $\overline{\mathrm{gr}(A)}$, the conclusion is clear.

(2.1) To show that the edge corresponding to $t(c_k)$ is in $G_{i,S}$ for $l+1\leq k\leq n$, it suffices to prove that $t(c_k)\neq i$ for $l+1\leq k\leq n$. Suppose on the contrary that there exists $l+1\leq m \leq n$ such that $t(c_m)=i$ and $t(c_k)\neq i$ for $l+1\leq k\leq m-1$. Since $G$ satisfies $\star$-condition with respect to $v_S\defeqi v_L$, we have that $G_{i,S}\neq G_{i,L}$ and $G_{i,S}$ is a tree. For the substring $c_m\ldots c_{l}\ldots c_1$, where $s(c_1)=t(c_m)=i$, the string $c_m\ldots c_{l}\ldots c_1$ satisfies the conditions of Lemma \ref{not-star-condition}, which then leads to a contradiction.

Next we show that $c_n\ldots c_{l+1}$ is a simple string. It suffices to show that all $t(c_k)$ are pairwise distinct for $l\leq k\leq n$. Suppose that there exist $k$ and $t$ satisfying $l\leq t<k\leq n$ such that $t(c_k)=t(c_t)=s(c_{t+1})$ and that $t(c_m)$ is different from $t(c_s)$ for each $l\leq m<k$ and $l\leq s<m$. Repeating the similar proof as above, we still get a contradiction.

(2.2) Since $G$ satisfies $\star$-condition with respect to $v_S\defeqi v_L$, this proof is  identical to the proof of  \cite[Proposition 5.4]{GL}.

(3) Since $G$ satisfies $\star$-condition with respect to any unbalanced edge in $E$, (2.1) shows that the band modules over the two algebras $\overline{\mathrm{gr}(A)}$ and  $\overline{\mathrm{gr}(A)}/(\bigoplus_{i\in E}r_i)$ are the same, and (2.2) shows that the number of string $\overline{\mathrm{gr}(A)}$-modules is equal to the number of string $\overline{\mathrm{gr}(A)}/(\bigoplus_{i\in E}r_i)$-modules plus $\sum_{i\in E}(n_{i}+1)^{2}$, it follows that $\overline{\mathrm{gr}(A)}$ and  $\overline{\mathrm{gr}(A)}/(\bigoplus_{i\in E}r_i)$ have the same representation type.
\end{proof}

\begin{Prop}\label{1-domestic-tree} Let $G$ be a Brauer graph which is a tree with $m(v)=2$ for exactly two vertices $v=w_0,w_1$ in $V(G)$ and $m(v)=1$ for all $v\neq w_0,w_1$. Then the following two conditions are equivalent:
\begin{enumerate}[$(1)$]
	\item $G$ satisfies $\star$-condition with respect to any unbalanced edge $v_S\defeqi v_L$;
    \item $w_0$ and $w_1$ are in $G_{i,L}$ for any unbalanced edge $v_{S}\defeqi v_{L}$ in $G$.
\end{enumerate}
Moreover, if $G$ satisfies $\star$-condition with respect to any unbalanced edge, then $\mathrm{gr}(A)$ is domestic. In particular, $\mathrm{gr}(A)$ is $1$-domestic.
\end{Prop}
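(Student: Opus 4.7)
\emph{Equivalence of (1) and (2).} For the direction (1) $\Rightarrow$ (2), I would observe that the $\star$-condition requires $m(v)=1$ for every $v \in V(G_{i,S})$. Since $w_0$ and $w_1$ are the only vertices of multiplicity $2$, they cannot lie in $V(G_{i,S})$; because $G$ is a tree and $V(G) = V(G_{i,S}) \sqcup V(G_{i,L})$, they must therefore lie in $V(G_{i,L})$.

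For the converse (2) $\Rightarrow$ (1), conditions $(1)$ and $(2)$ of Definition \ref{star-condition} are immediate from the tree hypothesis together with $w_0, w_1 \in V(G_{i,L})$. The real content is condition $(3)$, that every walk from $v_S$ inside $G_{i,S}$ is degree decreasing. I would reduce this to showing that for each consecutive pair $v_\ell, v_{\ell+1}$ along such a walk with $v_\ell$ closer to $v_S$ (joined by an edge $a_\ell$), we have $\mathrm{grd}(v_\ell) \geq \mathrm{grd}(v_{\ell+1})$. If $a_\ell$ is balanced this is trivial, so suppose $a_\ell$ is unbalanced; it then suffices to prove $v_\ell = v_L^{(a_\ell)}$. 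The key observation is that in $G \setminus a_\ell$, since $a_\ell$ lies in $G_{i,S}$ and hence is disjoint from $G_{i,L}$, the path from $w_0$ (or $w_1$) to $v_L^{(i)}$ and then across edge $i$ to $v_S^{(i)}$ and along the walk to $v_\ell$ all survives; thus $w_0, w_1$ and $v_\ell$ all lie in the same component of $G\setminus a_\ell$, while $v_{\ell+1}$ lies in the other component. Applying hypothesis (2) to the unbalanced edge $a_\ell$ forces $v_L^{(a_\ell)}$ to be in the component containing $w_0, w_1$, hence $v_L^{(a_\ell)} = v_\ell$, as required.

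\emph{The ``moreover'' statement.} Once the equivalence is established, the domestic conclusion follows by reduction to the known Brauer graph algebra case. Assuming $G$ satisfies $\star$-condition at every unbalanced edge, Lemma \ref{finite-number}(3) (with $E$ the set of all unbalanced edges) yields that $\overline{\mathrm{gr}(A)}$ and $\overline{A}$ have the same representation type; since $A$ and $\overline{A}$ (respectively $\mathrm{gr}(A)$ and $\overline{\mathrm{gr}(A)}$) have the same representation type by construction, $\mathrm{gr}(A)$ and $A$ have the same representation type. By Theorem \ref{Brauer-graph-domestic}(a)(1), a tree with $m(v)=2$ at exactly two vertices and $m(v)=1$ elsewhere yields a $1$-domestic Brauer graph algebra $A$, so $\mathrm{gr}(A)$ is $1$-domestic as well.

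\emph{Expected main obstacle.} The routine parts are the multiplicity-based observation (1) $\Rightarrow$ (2) and the final invocation of Theorem \ref{Brauer-graph-domestic}. The delicate step I expect to need the most care is the degree-decreasing verification in (2) $\Rightarrow$ (1), specifically the connectivity argument that identifies $v_L^{(a_\ell)}$ with the endpoint of $a_\ell$ closer to $v_S$; one must carefully track that removing $a_\ell$ does not separate $w_0,w_1$ from the partial walk, which uses crucially both the tree structure of $G$ and the fact that $a_\ell$ lies entirely inside $G_{i,S}$.
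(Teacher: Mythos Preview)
Your proposal is correct and follows essentially the same approach as the paper: the implication $(1)\Rightarrow(2)$ via condition~$(2)$ of the $\star$-condition, the implication $(2)\Rightarrow(1)$ via the connectivity argument identifying which endpoint of $a_\ell$ lies on the $w_0,w_1$ side (the paper phrases this by contradiction, you phrase it directly, but the content is identical), and the ``moreover'' part via Lemma~\ref{finite-number}(3) and Theorem~\ref{Brauer-graph-domestic} exactly as in the paper.
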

	
\begin{proof} Assume that $w_0$ and $w_1$ are in $G_{i,L}$ for any unbalanced edge $v_{S}\defeqi v_{L}$ in $G$. Since $G$ is a tree with $m(v)=2$ for exactly $v=w_0,w_1$ and $w_0$ and $w_1$ are in $G_{i,L}$ for $v_{S}\defeqi v_{L}$, $G_{i,S}\neq G_{i,L}$ and $G_{i,S}$ is a tree with $m\equiv1$. Then the conditions $(1)$ and $(2)$ of $\star$-condition hold. We next show that the condition $(3)$ of $\star$-condition holds.
		
We suppose, on the contrary that, there exists a vertex $w$ in $G_{i,S}$ for some unbalanced edge $v_S\defeqi v_L$ such that the walk $[v_{1},a_{1},v_{2},\ldots,$ $v_{k-1},a_{k-1},v_{k}]$ from $v_S$ to $w$ is not degree decreasing, where $v_1=v_{S}$ and $v_k=w$. In other words, there exists an unbalanced edge $v_{j}\defeqtj v_{j+1}$ with $\mathrm{grd}(v_{j})<\mathrm{grd}(v_{j+1})$ for some $1\leq j\leq k-1$. Since $d_G(v_j,v_L)+1=d_G(v_{j+1},v_L)$ and $w_0$ and $w_1$ are in $G_{i,L}$, $G_{i,L}\subseteq G_{a_j,S}$ and $w_0$ and $w_1$ are in $G_{a_j,S}$. It contradicts the condition that $w_0$ and $w_1$ are in $G_{a_j,L}$.

Conversely, assume that $G$ satisfies $\star$-condition with respect to any unbalanced edge $v_S\defeqi v_L$. Suppose that there is some unbalanced edge $v_S\defeqi v_L$ such that $w_0$ or $w_1$ is in $G_{i,S}$. It clearly contradicts the condition $(2)$ of $\star$-condition.

Now assume that $G$ satisfies $\star$-condition with respect to any unbalanced edge. Then,  by Lemma \ref{finite-number}, $\overline{\mathrm{gr}(A)}$ and $\overline{A}$ have the same representation type. It follows from Theorem \ref{Brauer-graph-domestic} that $\overline{\mathrm{gr}(A)}$ is $1$-domestic. Hence $\mathrm{gr}(A)$ is $1$-domestic.
\end{proof}
	
\begin{Prop}\label{domestic-cycle}
		Let $G$ be a Brauer graph with a unique cycle and $m(v)=1$ for all $v\in V(G)$. Then the following two conditions are equivalent:
\begin{enumerate}[$(1)$]
	\item $G$ satisfies $\star$-condition with respect to any unbalanced edge $v_S\defeqi v_L$;
    \item all edges in the unique cycle are not unbalanced edges and the unique cycle is in $G_{i,L}$ for any unbalanced edge $v_{S}\defeqi v_{L}$ in $G$.
\end{enumerate}
Moreover, if $G$ satisfies $\star$-condition with respect to any unbalanced edge, then $\mathrm{gr}(A)$ is domestic. In particular, if the unique cycle is of odd length (resp. even length), then $\mathrm{gr}(A)$ is $1$-domestic (resp. $2$-domestic).
\end{Prop}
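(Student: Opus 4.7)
The plan is to mirror the argument used in Proposition~\ref{1-domestic-tree} (the tree case), adapting it to the unique-cycle setting. The essential geometric observation driving everything is: in a graph with exactly one cycle, cutting any edge that does \emph{not} lie on the cycle leaves the cycle entirely in one of the two resulting components, so for an unbalanced edge $i$ off the cycle the cycle sits either in $G_{i,L}$ or in $G_{i,S}$.

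For the direction (2) $\Rightarrow$ (1), I would fix an arbitrary unbalanced edge $v_S\defeqi v_L$ and verify the three clauses of Definition~\ref{star-condition}. Clause~(1), $G_{i,S}\neq G_{i,L}$, follows from Remark~3.13(1): since $i$ is unbalanced and, by hypothesis, every cycle edge is balanced, $i$ cannot lie on the unique cycle. Clause~(2) holds because $G_{i,S}$ is a connected subgraph of $G\setminus i$ which, by hypothesis, does not contain the cycle, so it is a tree, and $m\equiv1$ is inherited from $G$. For clause~(3) I argue by contradiction: a non-degree-decreasing walk $[v_1=v_S,a_1,v_2,\ldots,a_{k-1},v_k]$ in $G_{i,S}$ would supply an index $j$ with $\mathrm{grd}(v_j)<\mathrm{grd}(v_{j+1})$, giving an unbalanced edge $a_j$ whose $S$-endpoint is $v_j$. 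Because $a_j$ lies in the tree $G_{i,S}$, removing $a_j$ from $G$ places $v_S$, the edge $i$, its endpoint $v_L$, and hence the unique cycle, all on the $v_j$-side; that is, the cycle sits in $G_{a_j,S}$, contradicting hypothesis~(2) applied to $a_j$. Conversely, for (1) $\Rightarrow$ (2), if some cycle edge $j$ were unbalanced then Remark~3.13(1) would force $G_{j,S}=G_{j,L}$, violating clause~(1) of $\star$-condition; so every cycle edge is balanced. Then for any unbalanced $i$, clause~(2) forces $G_{i,S}$ to be a tree, so the unique cycle is trapped in $G_{i,L}$.

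For the domesticity statement, once $\star$-condition is known to hold at every unbalanced edge, Lemma~\ref{finite-number}(3) gives that $\overline{\mathrm{gr}(A)}$ and $\overline{A}$ share the same representation type. Since the passage $(\cdot)\mapsto\overline{(\cdot)}$ preserves representation type, the same is true for $\mathrm{gr}(A)$ and $A$. Under the hypotheses on $G$ (unique cycle, $m\equiv1$), Theorem~\ref{Brauer-graph-domestic} says that $A$ is $1$-domestic when the cycle is of odd length and $2$-domestic when the cycle is of even length, so $\mathrm{gr}(A)$ inherits the same degree of domesticity.

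The main delicacy I expect is in clause~(3) of the (2)~$\Rightarrow$~(1) direction, where the cycle must be shown to lie on the wrong side of the putative bad edge $a_j$. Everything else reduces cleanly: the equivalence with non-trivial $G_{i,S}\neq G_{i,L}$ comes from Remark~3.13(1), and the domesticity transfer is immediate from Lemma~\ref{finite-number}(3) combined with Theorem~\ref{Brauer-graph-domestic}.
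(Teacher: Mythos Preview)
Your proposal is correct and follows essentially the same route as the paper: the paper also verifies clauses~(1) and~(2) of the $\star$-condition directly from the hypothesis (cycle edges balanced, cycle in $G_{i,L}$), defers clause~(3) to the same contradiction argument used in Proposition~\ref{1-domestic-tree}, handles the converse by observing that an unbalanced cycle edge violates clause~(1) and a cycle in $G_{i,S}$ violates clause~(2), and finishes with Lemma~\ref{finite-number}(3) plus Theorem~\ref{Brauer-graph-domestic}. Your write-up is in fact slightly more explicit in the clause~(3) contradiction than the paper, which merely says ``similar approach to the proof of Proposition~\ref{1-domestic-tree}''.
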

	
\begin{proof}  Assume that all edges in the unique cycle are not unbalanced edges and the unique cycle is in $G_{i,L}$ for any unbalanced edge $v_{S}\defeqi v_{L}$ in $G$. Since $m(v)=1$ for all $v\in V(G)$,  $m(v)=1$ for all $v\in V(G_{i,S})$ for $v_S\defeqi v_L$. Since all edges in the unique cycle are not unbalanced edges and the unique cycle is in $G_{i,L}$ for $v_S\defeqi v_L$, $G_{i,S}\neq G_{i,L}$ and $G_{i,S}$ is a tree. Then the conditions $(1)$ and $(2)$ of $\star$-condition hold. The condition $(3)$ of $\star$-condition holds by using a similar approach to the proof of Proposition \ref{1-domestic-tree}.

Conversely, assume that $G$ satisfies $\star$-condition with respect to any unbalanced edge $v_S\defeqi v_L$. If there is some edge in the unique cycle is an unbalanced edge, then it contradicts the condition $(1)$ of $\star$-condition. If there is some unbalanced edge $v_S\defeqi v_L$ such that the unique cycle is in $G_{i,S}$, then it contradicts the condition $(2)$ of $\star$-condition.

Now assume that $G$ satisfies $\star$-condition with respect to any unbalanced edge. Then, by Lemma \ref{finite-number}, $\overline{\mathrm{gr}(A)}$ and $\overline{A}$ have the same representation type. It follows from Theorem \ref{Brauer-graph-domestic} that if the unique cycle in $G$ is of odd length (resp. even length), then $\mathrm{gr}(A)$ is $1$-domestic (resp. $2$-domestic).
\end{proof}

The next result deals with a case where the cardinality of $\mathrm{Ba}(\overline{\mathrm{gr}(A)})$ is infinite. 

\begin{Lem}\label{nondomestic} If there are two distinct bands $b_1=c_m\ldots c_{l+1} c_l\ldots c_1$ and $b_2=c'_{m'}\ldots c'_{l+1} c_l\ldots c_1$ in $\overline{\mathrm{gr}(A)}$, where $s(c_1)=t(c_l)$, $c_l\ldots c_1$ is a directed substring, $c_{l+1}=c'_{l+1}$ is an inverse arrow, and $s(c_i)\neq s(c_1)$ (resp. $s(c'_i)\neq s(c_1)$) for $l+1<i\leq m$ (resp. $l+1<i\leq m'$). Then the cardinality of $\mathrm{Ba}(\overline{\mathrm{gr}(A)})$ is infinite and $\mathrm{gr}(A)$ is not of polynomial growth.
\end{Lem}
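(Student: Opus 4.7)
My plan is to construct an infinite family of bands in $\overline{\mathrm{gr}(A)}$ by concatenating suitable rotations of $b_1$ and $b_2$ at a common gluing letter, and then use a Lyndon-word count to upgrade this to super-polynomial growth.

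First I rotate $b_1$ and $b_2$ cyclically so that the common inverse letter $c_{l+1}$ becomes the leftmost letter of each:
\[
\tilde{b}_1 = c_{l+1} c_l \cdots c_1 c_m \cdots c_{l+2}, \qquad \tilde{b}_2 = c_{l+1} c_l \cdots c_1 c'_{m'} \cdots c'_{l+2}.
\]
Both are bands based at $v_\alpha := t(c_{l+1})$. For any $\sigma = (\sigma_1, \ldots, \sigma_k) \in \{1,2\}^k$, form $B_\sigma := \tilde{b}_{\sigma_1} \cdots \tilde{b}_{\sigma_k}$; I claim each $B_\sigma$ is a string. The only new adjacencies occur at junctions between consecutive blocks and take the form $c_{l+2}^{(\sigma_i)} c_{l+1}$ (with $c_{l+2}^{(1)} := c_{l+2}$ and $c_{l+2}^{(2)} := c'_{l+2}$), which already appears at positions $l+2, l+1$ of $b_{\sigma_i}$; so no cancellation and no subword of $I_2$ is introduced. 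Longer subwords crossing a junction either lie inside $b_{\sigma_i}^2$ (hence are substrings of a string) or contain a mixture of arrows and inverse arrows (so they cannot lie in $I_2$, whose generators are pure paths in $Q$). Thus every $B_\sigma$ is a closed walk at $v_\alpha$, and the same analysis shows every power $B_\sigma^r$ is a string.

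Specializing to $B_n := \tilde{b}_1 \tilde{b}_2^n$ for $n \geq 1$, I verify $B_n$ is a band by a periodicity argument: if $B_n = X^p$ with $p \geq 2$, then the rightmost $nm'$ letters of $B_n$ form $\tilde{b}_2^n$, whose minimal period as a word is $m'$, forcing $m' \mid |X|$; combined with $|X| \mid (m+nm')$, a short case analysis on whether $|X| \leq m$ or $|X| > m$ forces either $\tilde{b}_1$ to equal a power $\tilde{b}_2^{m/m'}$ or $\tilde{b}_1$ to be a proper power of a shorter word, both of which contradict that $\tilde{b}_1$ is a band distinct from $\tilde{b}_2$. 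Since the $B_n$ have pairwise distinct lengths $m + nm'$ and length is invariant under $\sim_A$, the bands $B_n$ give pairwise distinct elements of $\mathrm{Ba}(\overline{\mathrm{gr}(A)})$, so this set is infinite.

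For non-polynomial growth I use the full family $\{B_\sigma : \sigma \in \{1,2\}^k\}$. There are at least $(2^k-2)/k$ aperiodic cyclic classes in $\{1,2\}^k$ (Lyndon-word count). The block decomposition of the cyclic word $B_\sigma$ is recoverable up to bounded ambiguity by locating the pattern $c_{l+1} c_l \cdots c_1$ (which marks each block start) and reading the letter $c_{l+2}$ vs.\ $c'_{l+2}$ at each block end (which identifies the block type). Hence distinct aperiodic cyclic classes of $\sigma$ give distinct cyclic classes of bands, producing $\Omega(2^k/k)$ bands of length at most $k \max(m,m')$. Each such band contributes a one-parameter family of band modules of its length, so $\mu(d)$ grows faster than any polynomial in $d$, and therefore $\mathrm{gr}(A)$ is not of polynomial growth. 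The main technical obstacle I anticipate is verifying the cyclic identifiability of the block decomposition — the pattern $c_{l+1} c_l \cdots c_1$ might in principle also appear inside a single block $\tilde{b}_i$, potentially collapsing different $\sigma$'s onto the same cyclic $B_\sigma$; this will be handled by observing that spurious occurrences of the pattern are combinatorially restricted (each extra $c_{l+1}$ inside $\tilde{b}_i$ must be followed by the specific directed path $c_l \cdots c_1$), giving bounded fiber size and preserving the exponential lower bound.
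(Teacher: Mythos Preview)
Your strategy coincides with the paper's: form arbitrary concatenations of $b_1$ and $b_2$, check that the results are strings, and then count. The paper does not rotate; it works directly with $b_1,b_2$ based at $s(c_1)$, shows that $b_2^{k}b_1$ is a band for every $k\ge 1$, and for non-polynomial growth writes down, for each $m$, an explicit family of products $b_2^{k_n}b_1^{k_n}\cdots b_2^{k_1}b_1^{k_1}$ of a common dimension $d=2m\,n_1'n_2'$ (here $n_i'$ is the length of $b_i$) and counts them as $2^{mn_2'-1}$ (or $2^{mn_2'-2}-1$ when $n_1'=n_2'$), which beats any polynomial in $d$. Your Lyndon-word count is a perfectly good alternative to this explicit enumeration.

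Where your write-up works harder than necessary is exactly the place you flag as a gap. You never invoke the hypothesis $s(c_i)\neq s(c_1)$ for $l+1<i\le m$ (and its analogue for $b_2$), and this is precisely the paper's tool for both primitivity and identifiability. Since $s(c_{l+1})=s(c_1)$, the hypothesis forces the specific inverse letter $c_{l+1}$ to occur \emph{exactly once} in each block $\tilde b_1,\tilde b_2$: it cannot recur among $c_{l+2},\dots,c_m$ (that would give $s(c_i)=s(c_1)$), and $c_1,\dots,c_l$ are direct arrows. Hence in any concatenation $B_\sigma$ the occurrences of $c_{l+1}$ mark the block boundaries uniquely, the tail after each boundary distinguishes $\tilde b_1$ from $\tilde b_2$, and the map from cyclic sequences $\sigma$ to cyclic words $B_\sigma$ is injective. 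This single observation replaces your Fine--Wilf style period argument for $B_n$ and dissolves the ``bounded-fiber'' worry in the Lyndon count; your combinatorics-on-words route is more general in flavour, but here the available hypothesis makes the argument a one-liner.
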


\begin{proof} From the assumption of the two bands $b_1$ and $b_2$, we have that all powers of $b_2b_1$ are strings in $\overline{\mathrm{gr}(A)}$.
Moreover, since $b_1$ and $b_2$ are distinct and $s(c_i)\neq s(c_1)$ (resp. $s(c'_i)\neq s(c_1)$) for $l+1<i\leq m$ (resp. $l+1< i\leq m'$), $b_2b_1$ is not a power of a string of smaller length. Then $b_2b_1$ is also a band. Similarly, for any positive integer $k$, $b^k_2b_1$ is a band. Then the cardinality of $\mathrm{Ba}(\overline{\mathrm{gr}(A)})$ is infinite.

In order to prove that $\mathrm{gr}(A)$ is not of polynomial growth.  We just prove that $\overline{\mathrm{gr}(A)}$ is not of polynomial growth. Let $n'_i$ be the length of $b_i$ for $i=1,2$. When $n'_1=n'_2$, there are pairwise distinct elements $b_2^{k_n-1}b_1^{k_n}b_2^{k_{n-1}+1}b_1^{k_{n-1}}b_2^{k_{n-2}}b_1^{k_{n-2}}\ldots b_2^{k_1}b_1^{k_1}$  in $\mathrm{Ba}(\overline{\mathrm{gr}(A)})$, where $n$ and $k_n$ are positive integers greater than $1$, and $k_i$ is positive integer such that  $\sum_{i=1}^{n}k_i=mn'_2$ for each $1\leqslant i\leqslant n-1$.
Considering the corresponding indecomposable band modules with dimension $d$, where $d=2mn'_1n'_2$. We have that the number of the above indecomposable band modules is $\sum_{n=2}^{mn'_2-1}\binom{mn_2'-2}{n-1}=2^{mn'_2-2}-1$. Therefore $\mu_{\overline{\mathrm{gr}(A)}}(d)\geq 2^{mn'_2-2}-1$ and there is no positive integer $m$ such that $\mu_{\overline{\mathrm{gr}(A)}}(d)\leq d^{m}$ for all $d\geq 2$. Hence $\mathrm{gr}(A)$ is not of polynomial growth.

When $n'_1\neq n'_2$, without loss of generality, we can assume  $n'_2<n'_1$. There are pairwise distinct elements $b_2^{k_n+m(n'_1-n'_2)}b_1^{k_n}b_2^{k_{n-1}}b_1^{k_{n-1}}\ldots b_2^{k_1}b_1^{k_1}$  in $\mathrm{Ba}(\overline{\mathrm{gr}(A)})$, where $n$ and $k_i$ are positive integers with $\sum_{i=1}^{n}k_i=mn'_2$ for each $1\leqslant i\leqslant n$.  Considering the corresponding indecomposable band modules with dimension $d$, where $d=2mn'_1n'_2$.  
we have that the number of the above indecomposable band modules is $\sum_{n=1}^{mn'_2}\binom{mn_2'-1}{n-1}=2^{mn'_2-1}$. Therefore $\mu_{\overline{\mathrm{gr}(A)}}(d)\geq 2^{mn'_2-1}$ and there is no positive integer $m$ such that $\mu_{\overline{\mathrm{gr}(A)}}(d)\leq d^{m}$ for all $d\geq 2$. Hence $\mathrm{gr}(A)$ is not of polynomial growth.
\end{proof}

\subsection{Unbalanced edge pair in a Brauer tree} In order to describe the domestic $\overline{\mathrm{gr}(A)}$ when $G$ is a Brauer tree, we introduce the notion of unbalanced edge pair.

\begin{Def} \label{edge-pair}
Let $G$ be a Brauer tree with an exceptional vertex $v_0$ of multiplicity  $m_0$.
\begin{enumerate}[$(1)$]
	\item We call $(i,j)$ an unbalanced edge pair if $j$ is an edge in $G_{i,S}$ and $d_G(v^{(j)}_S,v^{(i)}_S)+1=d_G(v^{(j)}_L,v^{(i)}_S)$, where $v^{(i)}_S\defeqi v^{(i)}_L$ and $v^{(j)}_S\defeqj v^{(j)}_L$ are two unbalanced edges in $G$.
	\item We define $\kappa_0$ to be the number of unbalanced edges $v_S \defeqi v_L$ in $G$ such that the exceptional vertex $v_0$ is a vertex in $G_{i,S}$.
	\item We define $\kappa_1$ to be the number of unbalanced edge pairs in $G$.
\end{enumerate}
\end{Def}

\begin{Rem}	
\begin{enumerate}[$(1)$]	
	\item $(i,j)$ is an unbalanced edge pair if and only if $(j,i)$ is an unbalanced edge pair.
    \item $\kappa_1=0$ if and only if the unique walk from $v_S$ to any vertex in $G_{i,S}$ is degree decreasing for any unbalanced edge $v_S\defeqi v_L$ in $G$.
	\item By \cite[Theorem 4.5]{GL}, $\mathrm{gr}(A)$ is of finite representation type if and only if $G$ is a Brauer tree such that $\kappa_0(m_0-1)+\kappa_1=0$.	
\end{enumerate}
\end{Rem}
	
We have the following observations about $\kappa_0$ and $\kappa_1$.
	
\begin{Lem}\label{e_1}
 If $\kappa_1\neq 0$, then there are two unbalanced edges $v^{(i)}_S\defeqi v^{(i)}_L$ and  $v^{(j)}_S\defeqj v^{(j)}_L$ in $G$ such that the exceptional vertex $v_0$ is a vertex in $G_{i,S}$ and $(i,j)$ is an unbalanced edge pair. In particular, if $\kappa_1\neq 0$, then $\kappa_0\neq 0$.	
\end{Lem}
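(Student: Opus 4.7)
The plan is to exploit the symmetry, recorded in the preceding remark, that $(i,j)$ is an unbalanced edge pair if and only if $(j,i)$ is. Since $\kappa_1\neq 0$, there exists at least one unbalanced edge pair $(i,j)$ in $G$. I will show that at least one of the two statements ``$v_0$ lies in $G_{i,S}$'' and ``$v_0$ lies in $G_{j,S}$'' must hold, so that one of the two (symmetric) pairs $(i,j)$ or $(j,i)$ witnesses the desired conclusion. The final clause $\kappa_0 \neq 0$ is then immediate from the definition of $\kappa_0$, since an unbalanced edge whose smaller branch contains $v_0$ contributes to $\kappa_0$.

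The key step is the implication: if $v_0$ is not in $G_{i,S}$, then $v_0$ lies in $G_{j,S}$. Assume $v_0 \in G_{i,L}$. The defining equality $d_G(v^{(j)}_S, v^{(i)}_S)+1 = d_G(v^{(j)}_L, v^{(i)}_S)$ of the unbalanced edge pair $(i,j)$ tells us that, in the tree $G$, the vertex $v^{(j)}_S$ lies on the unique walk from $v^{(i)}_S$ to $v^{(j)}_L$; consequently the walk from $v^{(i)}_S$ to $v^{(j)}_S$ stays inside $G_{i,S}$ and does not use the edge $j$.

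Next I consider the unique walk from $v_0$ to $v^{(j)}_S$ in $G$. Since $v_0 \in G_{i,L}$ and $v^{(j)}_S \in G_{i,S}$, this walk crosses the edge $i$ exactly once, landing at $v^{(i)}_S$, and then continues along the walk from $v^{(i)}_S$ to $v^{(j)}_S$ identified above, which avoids $j$. Hence $v_0$ and $v^{(j)}_S$ lie in the same connected component of $G\setminus j$, namely $G_{j,S}$, so $v_0$ is in $G_{j,S}$. Combining this with the symmetry of the preceding remark, the pair $(j,i)$ is an unbalanced edge pair with $v_0$ in $V(G_{j,S})$, which fulfils the statement of the lemma.

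I anticipate no substantial obstacle; the entire argument is a short tree-geometry observation. The only point that deserves care is extracting, from the distance identity in the definition of an unbalanced edge pair, the fact that $v^{(j)}_S$ lies on the walk from $v^{(i)}_S$ to $v^{(j)}_L$, but this is automatic in a tree because $v^{(j)}_S$ and $v^{(j)}_L$ are adjacent via the edge $j$.
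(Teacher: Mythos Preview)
Your proof is correct and follows essentially the same approach as the paper's: assume an unbalanced edge pair $(i,j)$ exists, reduce to the case $v_0\in G_{i,L}$, and then show $v_0\in G_{j,S}$ so that the symmetric pair $(j,i)$ works. The paper states the containment $G_{i,L}\subseteq G_{j,S}$ directly, whereas you unpack it via the explicit walk argument; the underlying reasoning is the same.
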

	
\begin{proof} Since $\kappa_1\neq0$, there are two unbalanced edges $v^{(i)}_S\defeqi v^{(i)}_L$ and  $v^{(j)}_S\defeqj v^{(j)}_L$ in $G$ such that $(i,j)$ is an unbalanced edge pair. Without loss of generality, we assume that $v_0$ is a vertex in $G_{i,L}$. Since  $G\setminus i=G_{i,S}\bigcup G_{i,L}$ and $(i,j)$ is an unbalanced edge pair, $G_{i,L}\subseteq G_{j,S}$ and $v_0$ is a vertex in $G_{j,S}$. We get our desired result.
\end{proof}

\begin{Lem}\label{e_1-character}
Let $G$ be a Brauer tree with an exceptional vertex $v_0$ of multiplicity  $m_0$. Then $\kappa_1\geq 2$ if and only if there are three unbalanced edges $v^{(i)}_S\defeqi v^{(i)}_L$, $v^{(j)}_S\defeqj v^{(j)}_L$ and  $v^{(k)}_S\defeqk v^{(k)}_L$ in $G$ such that $(i,j)$ and $(i,k)$ are unbalanced edge pairs. 	
\end{Lem}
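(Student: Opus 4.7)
The backward direction is immediate: if three distinct unbalanced edges $i,j,k$ satisfy that $(i,j)$ and $(i,k)$ are both unbalanced edge pairs, they give two distinct unordered pairs (distinct because $j\neq k$), so $\kappa_1\geq 2$.

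For the forward direction, I would suppose $\kappa_1\geq 2$ and pick two distinct unbalanced edge pairs $\{a,b\}$ and $\{c,d\}$. If they share an edge --- WLOG $a=c$ --- then $(a,b)$ and $(a,d)$ share $a$ and give the desired triple with $i=a$, $j=b$, $k=d$. So the essential case is $\{a,b\}\cap\{c,d\}=\emptyset$, i.e.\ four distinct unbalanced edges $a,b,c,d$; it then suffices to show that at least one of the four ``mixed'' pairs $(a,c),(a,d),(b,c),(b,d)$ is also an unbalanced edge pair, since such a new pair would share an edge with one of the two original pairs, producing the triple.

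The key tool I would use is the reformulation: $(e,f)$ is an unbalanced edge pair iff $e\in G_{f,S}$ and $f\in G_{e,S}$, equivalently the unique tree path in $G$ from $v_S^{(e)}$ to $v_S^{(f)}$ traverses neither $e$ nor $f$. Then I would reduce to a small-tree analysis: the graph $G\setminus\{a,b,c,d\}$ has exactly five connected components, which are the vertices of a reduced tree $T$ having $a,b,c,d$ as its four edges (each oriented from its $L$-endpoint-component to its $S$-endpoint-component). The edge pair condition then translates cleanly to a condition on $T$: $(e,f)$ is an edge pair iff the unique path in $T$ from $C_e^S$ to $C_f^S$ uses neither $e$ nor $f$.

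The proof concludes by case analysis on the shape of $T$. As a tree on 5 vertices with 4 edges, $T$ has degree sequence $(2,2,2,1,1)$ (a path), $(3,2,1,1,1)$ (a ``Y with a tail''), or $(4,1,1,1,1)$ (a star). For each shape I would enumerate the partitions $\{a,b\}\sqcup\{c,d\}$ of the four edges and the orientations consistent with both $(a,b)$ and $(c,d)$ being edge pairs, and verify directly that in every such configuration at least one of $(a,c),(a,d),(b,c),(b,d)$ is also an edge pair; intuitively, the two original pairs force arrows to point ``inward'' along their connecting subpaths in $T$, which leaves one of the cross-paths unobstructed. The main obstacle is the bookkeeping --- particularly in the Y-with-tail case whose asymmetry produces the largest number of sub-cases --- but each sub-case reduces to a short combinatorial check in a 4-edge tree, and no machinery beyond the above reformulation is needed.
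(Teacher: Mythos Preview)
Your approach is correct, but it is considerably more laborious than the paper's. After dispensing with the shared-edge case exactly as you do, the paper also takes four pairwise distinct edges $i,j,k,l$ with $(i,j)$ and $(k,l)$ unbalanced pairs, but then makes one further \emph{WLOG} reduction that you do not: using the symmetry of the pair relation (and the inclusion $G_{i,L}\subseteq G_{j,S}$ coming from $(i,j)$ being a pair), one may assume $k\in G_{i,S}$, possibly after swapping $i\leftrightarrow j$. This single normalisation replaces your three-shape, multi-orientation enumeration on the contracted tree $T$ by a two-case dichotomy on the orientation of $k$ alone: if $v_S^{(k)}$ is closer to $v_S^{(i)}$ than $v_L^{(k)}$ is, then $(i,k)$ is a pair directly; otherwise $G_{k,S}\subseteq G_{i,S}$, so $l\in G_{i,S}$ and the orientation condition for $l$ relative to $v_S^{(k)}$ propagates to $v_S^{(i)}$, giving that $(i,l)$ is a pair. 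Either way one obtains a pair sharing the edge $i$ with $(i,j)$.

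What your approach buys is a self-contained combinatorial picture (the $5$-vertex tree $T$) that makes the symmetry of the situation visible and requires no clever choice of reference edge; what the paper's approach buys is brevity---it sidesteps the entire shape/partition/orientation enumeration and the attendant bookkeeping you flag in the ``Y with tail'' case. Since the case analysis you outline does go through (your reformulation of the pair condition as ``the $v_S$--$v_S$ path avoids both edges'' is correct and transfers verbatim to $T$), there is no gap; but you should be aware that the direct argument is only a few lines.
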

	
\begin{proof}
		``$\Longleftarrow$''
		It is obvious to get $\kappa_1\geq 2$ if $(i,j)$ and $(i,k)$ are unbalanced edge pairs. 	
		
		``$\Longrightarrow$''
		Since $\kappa_1\geq 2$, there are at least two unbalanced edge pairs. Without loss of generality, we assume that $(i,j)$ and $(k,l)$ are two unbalanced edge pairs, where the unbalanced edges $v^{(i)}_S\defeqi v^{(i)}_L$, $v^{(j)}_S\defeqj v^{(j)}_L$,   $v^{(k)}_S\defeqk v^{(k)}_L$ and $v^{(l)}_S \defeqL v^{(l)}_L$ are pairwise distinct and $k$ is an edge in $G_{i,S}$. There are two cases to be considered.
		
		{\it Case 1.} If $d_G(v^{(k)}_S,v^{(i)}_S)+1=d_G(v^{(k)}_L,v^{(i)}_S)$, then $(i,k)$ is an unbalanced edge pair. Moreover, $(i,j)$ is also an unbalanced edge pair. We have that $(i,j)$ and $(i,k)$ are two unbalanced edge pairs.
		
		{\it Case 2.} If $d_G(v^{(k)}_S,v^{(i)}_S)-1=d_G(v^{(k)}_L,v^{(i)}_S)$, since the unbalanced edge $l$ is an edge in $G_{k,S}$ with $d_G(v^{(l)}_S,v^{(k)}_S)+1=d_G(v^{(l)}_L,v^{(k)}_S)$ and $G_{k,S}\subseteq G_{i,S}$, then $l$ is an edge in $G_{i,S}$ with $d_G(v^{(l)}_S,v^{(i)}_S)+1=d_G(v^{(l)}_L,v^{(i)}_S)$ and therefore $(i,l)$ is an unbalanced edge pair. Moreover, $(i,j)$ is also an unbalanced edge pair. We have that $(i,j)$ and $(i,l)$ are two unbalanced edge pairs.
\end{proof}

\subsection{Main result and consequences} We now state our main result, which characterizes when $\mathrm{gr}(A)$ is domestic.

\begin{Thm}\label{main-result}
Let $A$ be the Brauer graph algebra associated with a Brauer graph $G=(V(G),E(G))$ and $\mathrm{gr}(A)$ the graded algebra associated with the radical filtration of $A$, where $V(G)$ is the vertex set and $E(G)$ is the edge set. Let $\kappa_0$ and $\kappa_1$ be defined as in Definition \ref{edge-pair}. Then 
the following three statements are equivalent.
\begin{enumerate}[$(a)$]
	\item $\mathrm{gr}(A)$ is of polynomial growth.
	\item $\mathrm{gr}(A)$ is domestic.
	\item The cardinality of $\mathrm{Ba}(\overline{\mathrm{gr}(A)})$ is finite, where  $\overline{\mathrm{gr}(A)}$ is defined in $(2.2)$.
\end{enumerate}
Furthermore, for domestic type, we have the following.
\begin{enumerate}
	\item[$(b1)$] $\mathrm{gr}(A)$ is $1$-domestic if and only if one of the following holds.
	\begin{enumerate}[$(1)$]
		\item $G$ is a Brauer tree with an exceptional vertex $v_0$ of multiplicity $m_0$ such that $\kappa_0(m_0-1)+\kappa_1=1$.
		\item $G$ is a tree and there exist two distinct vertices $w_0,w_1$, such that the following conditions hold:
       \begin{enumerate}[$(2.1)$]
			\item $m(w_0)=m(w_1)=2$ and $m(v)=1$ for $v\neq w_0,w_1$.
			\item $\mathrm{grd}(w_0)=\mathrm{grd}(w_1)$.
			\item Any walk from $w_0$ (or from $w_1$) is degree decreasing.
		\end{enumerate}
		\item $G$ is a graph with a unique cycle of odd length and $m(v)=1$ for all $v\in V(G)$, and satisfies the following conditions hold:
		\begin{enumerate}[$(3.1)$]
			\item $\mathrm{grd}(u)=\mathrm{grd}(v)$ for any two vertices $u$ and $v$ in the unique cycle.
			\item  Any walk from any vertex in the unique cycle is degree decreasing.
		\end{enumerate}
	\end{enumerate}
	\item[$(b2)$] $\mathrm{gr}(A)$ is $2$-domestic if and only if $G$ satisfies the following conditions.
	\begin{enumerate}[$(1)$]
		\item $G$ is a graph with a unique cycle of even length and $m(v)=1$ for all $v\in V(G)$.
		\item $\mathrm{grd}(u)=\mathrm{grd}(v)$ for any two vertices $u$ and $v$ in the unique cycle.
		\item Any walk from any vertex in the unique cycle is degree decreasing.
	\end{enumerate}
	\item[$(b3)$] $\mathrm{gr}(A)$ is not $n$-domestic for $n\geq3$.
\end{enumerate}
\end{Thm}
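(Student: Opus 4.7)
The plan is to establish the three-way equivalence $(a) \Leftrightarrow (b) \Leftrightarrow (c)$ first, and then obtain the explicit classification in $(b1)$--$(b3)$ by a case analysis running parallel to Theorem \ref{Brauer-graph-domestic}. For the equivalence, I would reduce from $\mathrm{gr}(A)$ to the string algebra $\overline{\mathrm{gr}(A)}$: the ideal $I_3$ is a finite-dimensional socle quotient, and any indecomposable $\mathrm{gr}(A)$-module that fails to be a $\overline{\mathrm{gr}(A)}$-module is projective-injective, so the two algebras share representation type. Since indecomposables over a string algebra are string modules (finitely many per dimension) or band modules (a $k^*$-parametric family per band and per multiplicity), finiteness of $\mathrm{Ba}(\overline{\mathrm{gr}(A)})$ gives $(c) \Rightarrow (b)$, and $(b) \Rightarrow (a)$ is immediate. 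For $(a) \Rightarrow (c)$ I would argue by contrapositive: if $\mathrm{Ba}(\overline{\mathrm{gr}(A)})$ is infinite, then I can extract two distinct bands sharing a common directed substring as required by Lemma \ref{nondomestic}, forcing $\mathrm{gr}(A)$ to be non-polynomial growth.

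The classification proceeds by cases on the shape of $G$. The ``sufficient'' directions of $(b1)(2)$, $(b1)(3)$ and $(b2)$ are contained in Propositions \ref{1-domestic-tree} and \ref{domestic-cycle}: under the stated hypotheses on graded degrees and degree-decreasing walks, $G$ satisfies the $\star$-condition at every unbalanced edge, so Lemma \ref{finite-number}$(3)$ yields that $\overline{\mathrm{gr}(A)}$ and $\overline{A}$ have the same representation type, and the domestic type is inherited from Theorem \ref{Brauer-graph-domestic}. For the converses, I would show that any violation of the hypotheses (unequal graded degrees on the unique cycle, a non-degree-decreasing walk from a vertex of the cycle or from $w_0, w_1$) produces an unbalanced edge at which the $\star$-condition fails; using Lemma \ref{not-star-condition} this creates a band in $\overline{\mathrm{gr}(A)}$ beyond those inherited from $\overline{A}$, and combined with a band coming from the non-tree structure, the pair fits Lemma \ref{nondomestic}, breaking polynomial growth. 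For graphs $G$ outside all cases of Theorem \ref{Brauer-graph-domestic} and not Brauer trees, $A$ is already not polynomial growth; lifting its infinitely many bands to $\overline{\mathrm{gr}(A)}$ yields $(b3)$.

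The main obstacle is the Brauer tree case $(b1)(1)$, where $A$ is representation-finite and $\overline{A}$ has no bands at all, so every band of $\overline{\mathrm{gr}(A)}$ must traverse at least one relation $r_i \in \mathbb{P}$ coming from an unbalanced edge. The key technical task, to be carried out via careful string surgery in Sections 4--6 using Lemma \ref{subword-cycle-1} and Lemma \ref{not-star-condition}, is to show that the equivalence classes of bands of $\overline{\mathrm{gr}(A)}$ are in bijection with two explicit families: the $\kappa_0(m_0-1)$ bands of the form $(\text{inverse arc in } G_{i,S}) \cdot C_{v_0}(\alpha)^k$ for an unbalanced edge $i$ with $v_0 \in V(G_{i,S})$ and $1 \leq k \leq m_0 - 1$, and the $\kappa_1$ bands wrapping once around an unbalanced edge pair $(i,j)$ by successively inserting $r_i$ and $r_j^{-1}$. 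Once one checks that distinct bands from these families are pairwise non-equivalent and never fit the compatibility hypothesis of Lemma \ref{nondomestic} when their total count equals $1$, the equivalence $\kappa_0(m_0-1) + \kappa_1 = 1 \Leftrightarrow \mathrm{gr}(A)\text{ is }1\text{-domestic}$ follows from the band-counting interpretation of domestic type established in the first paragraph, completing the proof.
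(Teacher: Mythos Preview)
Your reduction to $\overline{\mathrm{gr}(A)}$, the case split along Theorem~\ref{Brauer-graph-domestic}, and your treatment of $(b1)(2)$, $(b1)(3)$, $(b2)$ via the $\star$-condition and Lemma~\ref{finite-number}(3) all match the paper (Sections~5--6). The Brauer tree case, however, has a genuine gap. Your central claim there --- that the bands of $\overline{\mathrm{gr}(A)}$ are in bijection with $\kappa_0(m_0-1)+\kappa_1$ explicit objects --- is false: whenever $\kappa_0(m_0-1)+\kappa_1\ge 2$ the set $\mathrm{Ba}(\overline{\mathrm{gr}(A)})$ is \emph{infinite} (Proposition~\ref{Brauer-tree-not-polynomial} for $m_0\ge 3$, $\kappa_0\ne 0$; Lemma~\ref{Brauer-tree-nondomestic} for $m_0=2$ with $\kappa_0\ge 2$ or $\kappa_1\ne 0$; the lemma immediately following for $\kappa_1\ge 2$). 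With $m_0=3$, $\kappa_0=1$, $\kappa_1=0$, for instance, there are infinitely many bands, not two. So $\kappa_0(m_0-1)+\kappa_1$ is a threshold, not a band count, and your string-surgery program cannot possibly show that the listed bands exhaust $\mathrm{Ba}$, because they do not.

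The paper's actual method for the $1$-domestic direction is an algebraic comparison that your proposal does not mention. Since $\kappa_0(m_0-1)+\kappa_1=1$ forces either $(m_0=1,\kappa_1=1)$ or $(m_0=2,\kappa_0=1,\kappa_1=0)$, one builds an auxiliary Brauer graph $G'$ on the same underlying tree with multiplicity~$2$ at two chosen vertices (the $v_L$'s of the distinguished unbalanced edges, respectively $v_L$ and $v_0$), so that the associated algebra $B$ is $1$-domestic by Theorem~\ref{Brauer-graph-domestic}; one then realises $\overline{\mathrm{gr}(A)}/(\bigoplus_{k\in E_3}r_k)$ as a quotient of $\overline{B}$ by a radical ideal meeting the hypothesis of Lemma~\ref{same-number}, while Lemma~\ref{finite-number}(3) disposes of the $r_k$. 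The case $m_0\ge 3$, $\kappa_0\ne 0$ uses the same device, comparing instead with an auxiliary $G'$ whose Brauer graph algebra is already non-polynomial-growth. As a secondary point, your proposed general contrapositive for $(a)\Rightarrow(c)$ --- that an infinite $\mathrm{Ba}$ always yields two bands fitting the hypotheses of Lemma~\ref{nondomestic} --- is nowhere justified; the paper sidesteps this by establishing all of $(a)\Leftrightarrow(b)\Leftrightarrow(c)$ within each case separately rather than as a preliminary general fact.
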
	

\begin{proof} Since $A$ and $\overline{A}$ (resp. $\mathrm{gr}(A)$ and $\overline{\mathrm{gr}(A)}$) have the same representation type, and since $\overline{A}$ is a quotient of the algebra $\overline{\mathrm{gr}(A)}$, if the Brauer graph algebra $A$ is nondomestic (resp. $A$ is not of polynomial growth), then $\mathrm{gr}(A)$ is nondomestic (resp. $\mathrm{gr}(A)$ is not of polynomial growth). By Theorem \ref{Brauer-graph-domestic}, in order to describe when $\mathrm{gr}(A)$ is domestic or is of polynomial growth, it suffices to study $\overline{\mathrm{gr}(A)}$ in the case when $G$ is a Brauer tree or in the cases (a) and (b) in Theorem \ref{Brauer-graph-domestic}. The descriptions in these cases are given in Proposition \ref{Brauer-tree-domestic}, Proposition \ref{tree-another} and Proposition \ref{graph-another}, respectively.
\end{proof}

The main result gives the following consequences.

\begin{Cor} Let $A$ be a Brauer tree algebra and $\mathrm{gr}(A)$ the associated graded algebra of $A$. If $\kappa_1>1$ or $\kappa_1=1$ and $m_0>1$, then $\mathrm{gr}(A)$ is not of polynomial growth.
\end{Cor}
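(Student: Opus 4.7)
The plan is to deduce this corollary directly from Theorem~\ref{main-result} by showing that under the stated hypotheses $\mathrm{gr}(A)$ cannot be domestic; the equivalence of (a) and (b) in that theorem then immediately gives that $\mathrm{gr}(A)$ is not of polynomial growth. By part (b3) of Theorem~\ref{main-result}, it suffices to rule out every configuration listed in (b1) and (b2).

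First I would eliminate the non-Brauer-tree routes. Since $G$ is a Brauer tree, it is acyclic, so cases (b1.3) and (b2)---each of which requires $G$ to contain a unique cycle---are automatically excluded. Moreover, by the standing convention a Brauer tree has at most one vertex of multiplicity strictly greater than $1$ (the exceptional vertex $v_0$), so case (b1.2), which demands two distinct vertices both of multiplicity $2$, cannot occur either. Consequently the only possible route to domestic type is case (b1.1), which forces the identity $\kappa_0(m_0-1)+\kappa_1=1$.

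The heart of the argument is to show that this identity fails under each of the stated hypotheses. If $\kappa_1>1$, then $\kappa_1\geq 2$, and since $\kappa_0\geq 0$ and $m_0\geq 1$ yield $\kappa_0(m_0-1)\geq 0$, we obtain $\kappa_0(m_0-1)+\kappa_1\geq 2>1$. If instead $\kappa_1=1$ and $m_0>1$, then Lemma~\ref{e_1} (applied to $\kappa_1\neq 0$) gives $\kappa_0\geq 1$, and hence $\kappa_0(m_0-1)+\kappa_1\geq (m_0-1)+1=m_0\geq 2>1$. In either case the identity in (b1.1) cannot hold.

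Combining these observations, $\mathrm{gr}(A)$ satisfies none of the conditions characterizing $n$-domestic type in Theorem~\ref{main-result}, so it is not domestic and therefore not of polynomial growth. I do not expect any serious obstacle: the argument is a short case distinction, reading off Theorem~\ref{main-result} and invoking Lemma~\ref{e_1} once; the only delicate point is the structural remark justifying that a Brauer tree is incompatible with the hypotheses of (b1.2), (b1.3), and (b2).
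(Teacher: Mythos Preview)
Your argument is correct, and since the paper states this corollary without proof as an immediate consequence of the main theorem, your route via Theorem~\ref{main-result} together with Lemma~\ref{e_1} is exactly what is intended. One minor shortcut: you may cite Proposition~\ref{Brauer-tree-domestic} directly (which already restricts to Brauer trees and gives the equivalence of polynomial growth with $\kappa_0(m_0-1)+\kappa_1=1$), thereby sparing yourself the elimination of cases (b1.2), (b1.3), and (b2).
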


\begin{Cor}
Let $A$ be a Brauer graph algebra and $\mathrm{gr}(A)$ the associated graded algebra of $A$. Then $\mathrm{gr}(A)$ is $n$-domestic if and only if the cardinality of $\mathrm{Ba}(\overline{\mathrm{gr}(A)})$ is $n$. \end{Cor}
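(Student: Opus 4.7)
\noindent My plan is to combine Theorem \ref{main-result} with the Butler--Ringel classification of indecomposable modules over string algebras.

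First I would reduce from $\mathrm{gr}(A)$ to the string algebra $\overline{\mathrm{gr}(A)}$. By construction $\overline{\mathrm{gr}(A)}$ is a quotient of $\mathrm{gr}(A)$, and the indecomposable $\mathrm{gr}(A)$-modules which are not inflated from $\overline{\mathrm{gr}(A)}$-modules are precisely the finitely many indecomposable projective-injective $\mathrm{gr}(A)$-modules indexed by $L'$. Finitely many extra indecomposables, spread over all dimensions, do not affect the function $\mu$: its very definition permits a finite exception in each dimension. Hence $\mu_{\mathrm{gr}(A)}(d) = \mu_{\overline{\mathrm{gr}(A)}}(d)$ for every $d$, and it suffices to prove that $\overline{\mathrm{gr}(A)}$ is $n$-domestic if and only if $|\mathrm{Ba}(\overline{\mathrm{gr}(A)})| = n$.

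For the string algebra $\overline{\mathrm{gr}(A)}$, the Butler--Ringel classification splits the indecomposable modules into string modules (finitely many in each fixed dimension) and band modules $M(B,n,\varphi)$ with $B \in \mathrm{Ba}(\overline{\mathrm{gr}(A)})$, $n \geq 1$, and $\varphi$ an indecomposable $k[x,x^{-1}]$-module structure on $k^n$. Over the algebraically closed field $k$, such $\varphi$ are equivalent to invertible Jordan blocks $J_n(\lambda)$ with $\lambda \in k^{\ast}$, so for each fixed pair $(B,n)$ the isomorphism classes of $M(B,n,\varphi)$ form a single one-parameter family of dimension $n\cdot \ell(B)$ parameterized by $\lambda \in k^{\ast}$, and this family is realized by a single $\overline{\mathrm{gr}(A)}$-$k[x]$-bimodule. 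Band modules coming from different elements of $\mathrm{Ba}(\overline{\mathrm{gr}(A)})$ are pairwise non-isomorphic. Consequently, $\mu_{\overline{\mathrm{gr}(A)}}(d)$ equals the number of $B\in\mathrm{Ba}(\overline{\mathrm{gr}(A)})$ whose length divides $d$; this is bounded above by $|\mathrm{Ba}(\overline{\mathrm{gr}(A)})|$ and attains that value whenever $d$ is a common multiple of all band lengths.

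Combining the two steps, $\overline{\mathrm{gr}(A)}$ and hence $\mathrm{gr}(A)$ is $n$-domestic precisely when $|\mathrm{Ba}(\overline{\mathrm{gr}(A)})| = n$, which is the corollary. I do not expect a genuine obstacle: the nontrivial combinatorial content, namely that $|\mathrm{Ba}(\overline{\mathrm{gr}(A)})|$ is finite iff $\mathrm{gr}(A)$ is of polynomial growth and that in the domestic cases this cardinality lies in $\{0,1,2\}$ in accordance with the shape of $G$, is already absorbed into Theorem \ref{main-result} and the case-by-case analyses supporting it. The corollary is thus essentially a clean reformulation of the main result through the band-counting dictionary.
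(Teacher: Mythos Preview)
Your argument is correct. The paper offers no separate proof of this corollary; it is presented as a consequence of Theorem~\ref{main-result}, the intended reading being that the case analysis behind it (Propositions~\ref{Brauer-tree-domestic}, \ref{tree-domestic}, \ref{graph-domestic} and the lemmas feeding into them) determines both the domestic level and $|\mathrm{Ba}(\overline{\mathrm{gr}(A)})|$ in each situation, and these are observed to coincide. Your route is more direct and uses strictly less of the main theorem: you invoke only the equivalence $(b)\Leftrightarrow(c)$, and then read off the precise value of $n$ from the Butler--Ringel description via $\mu_{\overline{\mathrm{gr}(A)}}(d)=\#\{B\in\mathrm{Ba}:\ell(B)\mid d\}$, which is bounded by $|\mathrm{Ba}|$ and attains that bound at any common multiple of the band lengths. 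This computation is not special to associated graded algebras of Brauer graph algebras: it shows that any string algebra with $|\mathrm{Ba}|=n<\infty$ is $n$-domestic, and since a finite quiver admits only finitely many strings of each given length, $|\mathrm{Ba}|=\infty$ forces unbounded band lengths and hence unbounded $\mu$, giving the converse without appealing to $(b)\Leftrightarrow(c)$ at all. Your argument therefore settles, for string algebras and hence via the passage $A\mapsto\overline{A}$ for special biserial algebras, precisely the question the authors raise in the paragraph immediately following the corollary.
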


According to Theorem \ref{Brauer-graph-domestic}, the similar result as in the above corollary holds for any Brauer graph algebra $A$, that is, $A$ is $n$-domestic if and only if the cardinality of $\mathrm{Ba}(\overline{A})$ is $n$. It would be interesting to know whether there is a similar result for any domestic special biserial algebra.
 
Comparing with the results in \cite{GL}, we would also like to give the following remarks on the relationships between the Auslander-Reiten quivers of $\mathrm{gr}(A)$ and $A$.

\begin{Rem}
\begin{enumerate}[$(1)$]
\item Similar as the discussion in \cite[Section 5]{GL}, for domestic $\mathrm{gr}(A)$ except the Brauer tree case, based on Lemma \ref{finite-number}, Proposition \ref{1-domestic-tree}, Proposition \ref{domestic-cycle}, we can prove that, the Auslander-Reiten quiver of $\overline{A}$ is obtained from the Auslander-Reiten quiver of $\overline{\mathrm{gr}(A)}$ by removing several diamonds.
	\item When $G$ is a Brauer tree and $\mathrm{gr}(A)$ is domestic, the situation is more complicated. In this case we have the following conjecture on the Auslander-Reiten quiver $\Gamma$ of $\overline{\mathrm{gr}(A)}$:
\begin{enumerate}[$(2.1)$]
\item  $\Gamma$ consists of components of the form $\mathbb{Z}\tilde{A}_{p,q}$ and components of the form $\mathbb{Z}A_{\infty}/\langle\tau^n\rangle$ (both components are up to deleting some diamonds). Moreover, when $m_0=1$, $\Gamma$ has a component $\mathbb{Z}\tilde{A}_{p,q}$ satisfying $p+q=n_i+n_j+2$ with $(i,j)$ the unique unbalanced edge pair in $G$; when $m_0=2$, $\Gamma$ has a component $\mathbb{Z}\tilde{A}_{n_i+1,|E(G)|}$ with $i$ the unique unbalanced edge in $G$ such that the exceptional vertex $v_0$ is in $G_{i,S}$.
\end{enumerate} 
\item  We note that in the picture of \cite[Remark 5.14]{GL}, the obtained  part $W$ in the  Auslander-Reiten quiver  of $\overline{\mathrm{gr}(A)}$ may be different from the beginning wing $W$, since the obtained  part may contain new inserted diamonds.
\end{enumerate}		
\end{Rem}

\section{The case that $G$ is a Brauer tree} In this section, we describe when $\mathrm{gr}(A)=kQ/I'$ is domestic in the case when $G=(V(G),E(G),m)$ is a Brauer tree with an exceptional vertex $v_0$ of multiplicity $m_0$, where $V(G)$ is the vertex set, $E(G)$ is the edge set and $m$ is the multiplicity function of $G$. Let  $\kappa_0$ and  $\kappa_1$ be defined in Definition \ref{edge-pair}.

Since the number of string modules of a given dimension is finite, it suffices to consider band modules when we consider the representation type of a representation-infinite string algebra. The following lemma is useful when we consider two related representation-infinite string algebras.
	
\begin{Lem}\label{same-number}
Let $\Lambda=kQ/I$ and $\Gamma=\Lambda/J$ be two representation-infinite string algebras, where $J$ is an ideal of $\Lambda$ with $\rad^m(\Lambda)\subseteq J\subseteq \rad^2(\Lambda)$ for some $m\geq 2$. Suppose that for any indecomposable $\Lambda$-module $M$ satisfying $JM\neq 0$, $M$ is a string $\Lambda$-module. Then $\Gamma$ is of polynomial growth (resp. domestic) if and only if $\Lambda$ is of polynomial growth (resp. domestic).
\end{Lem}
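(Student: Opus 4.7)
The plan is to reduce the statement to showing that the sets of bands of $\Lambda$ and of $\Gamma$ coincide under the identification of the common quiver $Q$. Once this is established, the $1$-parameter families of indecomposable modules of $\Lambda$ and $\Gamma$ at each fixed dimension are the same, so $\mu_\Lambda(d) = \mu_\Gamma(d)$ for every $d\geq 1$, and the equivalences for polynomial growth and for domesticity follow from the definitions recalled in Subsection 2.1.

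First I would record two elementary preliminaries. $(a)$ A $\Lambda$-module $M$ is a $\Gamma$-module iff $JM=0$; and in that case $\mathrm{End}_\Lambda(M) = \mathrm{End}_\Gamma(M)$, so indecomposability is unambiguous between the two algebras. $(b)$ The conditions ``$C$ is a string'' and ``$C$ is a band'' depend only on the defining ideal, and since $\Gamma$ is obtained from $\Lambda$ by imposing further relations, the ideal defining $\Gamma$ contains that of $\Lambda$. Hence every string (resp.\ band) in $\Gamma$ is automatically a string (resp.\ band) in $\Lambda$. The hypothesis of the lemma rephrases as: for every band $B$ in $\Lambda$, every band module $M(B,n,\varphi)$ is annihilated by $J$ and is thus a $\Gamma$-module.

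The crucial step is the reverse inclusion, namely that every band $B$ in $\Lambda$ is also a band in $\Gamma$. Since $\Lambda$ and $\Gamma$ are string algebras, their defining ideals are generated by paths, so $J$ may be taken to be generated, modulo $I$, by paths in $Q$ of length at least $2$. Suppose, for contradiction, that some generating path $p$ of $J$ occurred as a directed subword of a cyclic rotation of some power $B^k$. Then, by the Butler--Ringel description of band modules recalled in Subsection 2.2, the action of $p$ on an appropriate basis vector of the band module $M(B,1,\varphi)$ is a nonzero basis vector; hence $p\cdot M(B,1,\varphi)\neq 0$ and $JM(B,1,\varphi)\neq 0$, contradicting the hypothesis. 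A symmetric argument, applied to $B^{-1}$, rules out generators of $J$ appearing as inverse subwords. Thus no generator of $J$ is a subword of any power of $B$ (or its inverse), and $B$ remains a band in $\Gamma$.

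With the bijection of bands at hand, the isomorphism classes of band $\Lambda$-modules of dimension $d$ coincide with the band $\Gamma$-modules of dimension $d$ for every $d$, while in each fixed dimension there are only finitely many string modules over either string algebra. These finitely many string modules are absorbed by the ``all but finitely many'' clause in the definition of $\mu_A(d)$, so the minimum number of $k[x]$-parameterized families needed at dimension $d$ is the same for $\Lambda$ and $\Gamma$. The conditions ``$\mu_A(d)$ is uniformly bounded'' (domesticity) and ``$\mu_A(d)\leq d^m$ for some $m$ and all $d$'' (polynomial growth) then transfer verbatim. The main obstacle will be the third paragraph above, specifically the careful combinatorial verification that a generating path of $J$ appearing as a subword of a power of $B$ must act nonzero on some band module $M(B,n,\varphi)$; this ultimately amounts to unwinding the explicit basis description of band modules from \cite{BR}.
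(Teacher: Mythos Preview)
Your proposal is correct and follows essentially the same approach as the paper: both use the hypothesis in contrapositive form to conclude that every band $\Lambda$-module satisfies $JM=0$ and hence is a (band) $\Gamma$-module, so the band modules of $\Lambda$ and $\Gamma$ coincide dimension by dimension, while string modules contribute only finitely many classes at each dimension. The paper argues this directly at the module level in two lines, whereas you unwind it combinatorially at the level of bands (showing no generator of $J$ can occur as a subword of a power of $B$), but the content is the same.
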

	
\begin{proof} Since the algebra $\Gamma$ is a quotient of the algebra $\Lambda$, we have that any band $\Gamma$-module can be considered as a band $\Lambda$-module. Moreover, from the assumption that $M$ is a string $\Lambda$-module for any indecomposable $\Lambda$-module $M$ satisfying $JM\neq 0$, it follows that any band $\Lambda$-module is also a band $\Gamma$-module. Hence there is a one to one correspondence between band $\Lambda$-modules and band $\Gamma$-modules. Combining the remark before this lemma, we get the desired result.
\end{proof}

\begin{Rem}
We have used a special case of the above lemma in Lemma \ref{finite-number} (3), where $\Lambda=\overline{\mathrm{gr}(A)}$ and $\Gamma=\overline{A}$.
\end{Rem}

\begin{Prop}\label{Brauer-tree-not-polynomial}
		Let $\overline{\mathrm{gr}(A)}=kQ/I_2$ be defined in $(2.2)$ and $G$ the associated Brauer tree with an exceptional vertex $v_0$ of multiplicity  $m_0$. If $m_0\geq3$ and $\kappa_0\neq 0$. Then $\overline{\mathrm{gr}(A)}$ and $\mathrm{gr}(A)$ are not of polynomial growth.
\end{Prop}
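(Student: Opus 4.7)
The plan is to apply Lemma~\ref{nondomestic} by constructing two suitable bands in $\overline{\mathrm{gr}(A)}$. First, using $\kappa_0\neq 0$, I select an unbalanced edge $v_S\defeqi v_L$ in $G$ with $v_0\in G_{i,S}$, and let $r_i=C_{v_L}^{m(v_L)}\in\mathbb{P}$ denote the longer closed directed path at the vertex $i$ of $Q$, which is non-zero in $\overline{\mathrm{gr}(A)}$. Since $v_0\in G_{i,S}$, the walk from $v_S$ to $v_0$ in $G_{i,S}$ lifts to a directed path in $Q$ from $i$ to some vertex $j$ corresponding to an edge incident to $v_0$. The crucial use of $m_0\geq 3$ is the following: at $j$, the iterates $C_{v_0},C_{v_0}^{2},\dots,C_{v_0}^{m_0-1}$ are pairwise distinct non-zero closed directed paths in $\overline{\mathrm{gr}(A)}$, because only $C_{v_0}^{m_0}$ is killed by the first-type relation at the relevant edge incident to $v_0$.

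Next I would construct two bands $b_k=C_{v_0}^{k}\cdot D$ for $k=1,2$, where $D$ is a common inverse ``return journey'' at $j$ built from $r_i^{-1}$ together with the inverse of the connecting path between $j$ and $i$. After rotating the cyclic words so that the full $C_{v_0}^{k}$ appears at the right end, the two bands share a common directed closed substring $c_l\cdots c_1=C_{v_0}$ of length $\mathrm{val}(v_0)$ (closed at $s(c_1)=j$) and a common first inverse letter $c_{l+1}$; they differ only in the additional iteration of $C_{v_0}$ present in $b_2$. Since the iterates are distinct and have valid directed subwords (none reaching length $\mathrm{grd}(v_0)$), both $b_1$ and $b_2$ are valid bands in $\overline{\mathrm{gr}(A)}$.

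The main obstacle is verifying the ``no premature return'' condition $s(c_i)\neq s(c_1)=j$ for $l+1<i$ in both $b_1$ and $b_2$, which requires the return journey $D$ to visit $j$ only at the cyclic boundary. This is a delicate combinatorial check depending on whether $v_0=v_S$ or $v_0\neq v_S$, on $\mathrm{val}(v_0)$, and on $m(v_L)$; in particular, when $v_0=v_S$ the condition reduces to $r_i^{-1}$ passing through the vertex $i=j$ only at the boundary, and when $v_0\neq v_S$ extra care is needed for the intermediate arrows coming from the walk. When the strict non-return condition fails for the naive choice of $D$, one instead works with compound bands obtained by concatenating iterates of base bands and verifies directly, using the structural argument in the proof of Lemma~\ref{nondomestic}, that the resulting family of distinct bands is infinite and has exponentially growing cardinality in each fixed dimension. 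In either case, one concludes that $\mathrm{Ba}(\overline{\mathrm{gr}(A)})$ is infinite and that $\mathrm{gr}(A)$ is not of polynomial growth, completing the proof.
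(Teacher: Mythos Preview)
Your approach has two concrete gaps that prevent it from going through as written.

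First, the assertion that the walk from $v_S$ to $v_0$ in $G_{i,S}$ ``lifts to a directed path in $Q$ from $i$ to some vertex $j$'' is false in general. By the third-type relations in $I_2$, any non-zero directed path in $Q$ is a subpath of a single special-cycle power $C_v^k$; as soon as the walk in $G$ has length $\geq 2$, the corresponding simple string (Remark~\ref{construct-simple-string}) passes through at least two distinct special cycles and therefore must alternate between directed and inverse segments. Consequently your ``return journey'' $D$ cannot be taken purely inverse when $v_0\neq v_S$, and the shape of $b_k$ is not what you describe.

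Second, even in the favourable case $v_0=v_S$ (so $j=i$ and $D=r_i^{-1}$), the bands $b_1=r_i^{-1}C_{v_0}$ and $b_2=r_i^{-1}C_{v_0}^2$ do not satisfy the hypotheses of Lemma~\ref{nondomestic}. Taking the common directed closed prefix $c_l\cdots c_1=C_{v_0}$ and reading $b_2$ from the right, the letter $c_{l+1}$ is the first arrow of the second copy of $C_{v_0}$, which is a \emph{direct} arrow, not an inverse one; so the crucial condition ``$c_{l+1}=c'_{l+1}$ is an inverse arrow'' fails. Your fallback (``work with compound bands and verify directly'') might be salvageable in spirit, since products $b_2^{a_n}b_1^{c_n}\cdots$ do give infinitely many distinct bands in this special case, but you have not carried out the counting argument, and the general case $v_0\neq v_S$ remains untreated.

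For comparison, the paper avoids all of this delicate band-building. It passes to a quotient $C$ of $\overline{\mathrm{gr}(A)}$ obtained by killing the idempotents outside $G_{i,S}\cup\{\text{edges at }v_L\}$ together with certain $r_j$, and then identifies $C$ with a quotient $\overline{B}/J$, where $B$ is the Brauer graph algebra of a modified graph $G'$ in which $m(v_0)=m_0$ and $m(v_L)=2$. Since $m_0\geq 3$, Theorem~\ref{Brauer-graph-domestic} gives that $B$ (hence $\overline{B}$) is not of polynomial growth; a short claim shows that every band of $\overline{B}$ avoids the generators of $J$, so Lemma~\ref{same-number} transfers non-polynomial growth from $\overline{B}$ to $C$, and hence to $\overline{\mathrm{gr}(A)}$. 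This indirect route sidesteps the combinatorial verification of band conditions that is the sticking point in your argument.
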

	
\begin{proof}	
We only need to prove that $\overline{\mathrm{gr}(A)}$ is not of polynomial growth. 
Since $\kappa_0\neq 0$, we have that there is an unbalanced edge $v_S\defeqi v_L$ such that the exceptional vertex $v_0$ is in $G_{i,S}$. Let $i_1<i_2<\cdots<i_n<i_1$ be the cyclic ordering at $v_L$, where $i_1=i$ and $n=\mathrm{grd}(v_L)=\mathrm{val}(v_L)$.  Note that $n>2$. Let $E_1=\{i_1,i_2,\ldots,i_n\}$ and $E_2=E(G_{i,S})\cup E_1$, where the edge $i_k$ is incident to $v_L$ and $v'_k$ for any $2\leq k\leq n$, and $E(G_{i,S})$ is the edge set of $G_{i,S}$. We denote by $E_3$ the set of all unbalanced edges in $G_{i,S}$ and by $r_j$ the element in $\mathbb{P}$ corresponding to the unbalanced edge $j$ in $E_3$. We have algebra homomorphisms as follows.
$$\mathrm{gr}(A)\twoheadrightarrow\overline {\mathrm{gr}(A)}\twoheadrightarrow \overline {\mathrm{gr}(A)}/(\overline {\mathrm{gr}(A)}e\overline {\mathrm{gr}(A)}\oplus \bigoplus\limits_{j\in E_3} r_j),$$ 
where  $e_i$ is the primitive idempotent in $Q$ corresponding to the edge $i$ in $E(G)\setminus E_2$ and $e=\sum_{i\in E(G)\setminus E_2}e_i$.
		
Let $C$ be the above algebra $\overline {\mathrm{gr}(A)}/(\overline {\mathrm{gr}(A)}e\overline {\mathrm{gr}(A)}\oplus \bigoplus_{j\in E_3} r_j)$. Then $C=kQ'/I_{C}$, where $I_{C}$ is an admissible ideal in $kQ'$ and $Q'$ is a subquiver of $Q$ by removing all vertices corresponding to the edges in $E(G)\setminus E_2$ and all related arrows. Note that $C$ is a string algebra and is representation-infinite.

We next construct a related Brauer graph $G'$. Let $G'=(V(G_{i,S})\cup\{v_L,v'_2,v'_3,\ldots,v'_n\},E_2,m)$, where  $V(G_{i,S})$ is the vertex set of $G_{i,S}$, $m(v_0)=m_0$, $m(v_L)=2$ and $m(v)=1$ for the other vertices $v$. We may visualise the underlying graph of $G'$ from $G$ as follows: 

	$$\begin{picture}(35.00,63.00)  
  \unitlength=1.00mm \special{em:linewidth 0.4pt}
  \linethickness{0.4pt}
  \put(0,12){\line(1,0){10}} \put(10,11.5){$v_L$} \put(14,12){\line(1,1){9}}  \put(27,23){\line(1,1){6}} \put(19,12){$\vdots$}    \put(12,10){\line(-1,-3){2.5}}\put(6.4,6){$i_2$} \put(7.6,0){$v'_2$}     \put(14,11  ){\line(2,-1){9}}  \put(17,6){$i_3$} \put(23,6){$v'_3$}    \put(16,18){$i_n$}  \put(23,21){$v'_n$} \put(30,20){$\vdots$}    \put(27,20){\line(1,-1){6}}
  \put(27,7){\line(1,1){6}}  
  \put(30,5){$\vdots$}
  \put(27,5){\line(1,-1){6}}
  \put(5,12.5){$i$} 
  \put(-3.8,11.5){$v_S$}\put(-4,12){\line(-1,1){10}}   \put(-4,12){\line(-1,-1){10}}\put(-10,11){$\vdots$}   
  \put(-24,2){\line(1,0){10}}\put(-24,7){$\ldots$}   \put(-24,17){$\ldots$}   
  \put(-28,-3){\line(1,0){55}} \put(27,-3){\line(0,1){30}}
  \put(-28,27){\line(1,0){55}}  \put(-28,-3){\line(0,1){30}}
  \put(-26,23){$G'$}
 \end{picture}$$
 
Let $B$ be the Brauer graph algebra associated with the new Brauer graph $G'$ and $\overline{B}=kQ'/I_{B,1}$ the corresponding string algebra defined in $(2.1)$. By the construction of the Brauer graph $G'$ and Theorem \ref{Brauer-graph-domestic}, we have that $B$ is not of polynomial growth. Therefore, $\overline{B}$ is not of polynomial growth.

Now let $E_4=\{i_k|\mathrm{grd}(v_L)\leq \mathrm{grd}(v_k'), 1<\mathrm{val}(v_k'), 2\leq k\leq n\}$ (which is also defined in the original Brauer graph $G$), where the edge $i_k$ is incident to $v_L$ and $v'_k$ in $G$ for any $2\leq k\leq n$. We can get the following algebra isomorphism from their constructions
		
		$$C\cong \overline{B}/(\bigoplus\limits_{i_k\in E_1\setminus E_4} \rad^{n+1}(P_{i_k})\oplus\bigoplus\limits_{i_k\in E_4} \rad^{n}(P_{i_k})),$$
		\\ where $P_{i_k}$ is the projective cover of the simple $\overline{B}$-module $S_{i_k}$ corresponding to the edge $i_k$ in $G'$ for each $1\leq k\leq n$ and where $J:=\bigoplus_{i_k\in E_1\setminus E_4} \rad^{n+1}(P_{i_k})\oplus\bigoplus_{i_k\in E_4} \rad^{n}(P_{i_k})$ is an ideal of $\overline{B}$. Clearly $J\subseteq \rad^2(\overline{B})$.

Note that there are two arrows starting and ending at vertex $i$ of $Q'$ and there is one arrow starting and ending at vertex $i_k$ of $Q'$ for all $2\leq k\leq n$. So $Q'$ contains the following subquiver:	
$$\xymatrix@C-0.8em@R-0.5em@r{
			&\cdot \ar@/^0.63pc/[rd]_{\alpha_{m_1}} &                                  &  i_2\ar@/^0.635pc/[rd]_{\alpha_{2}'}    &  \\
			\ldots\ar@/^0.635pc/[ru]    &       &   i\ar@/^0.63pc/[ru]_{\alpha_{1}'}\ar@/^0.63pc/[ld]_{\alpha_{1}} &    &\ldots\ar@/^0.63pc/[ld]\\
			&           \cdot\ar@/^0.63pc/[lu]_{\alpha_{2}}     &        & i_n \ar@/^0.63pc/[lu]_{\alpha_{n}'} & \quad,
}$$
where $m_1=\mathrm{val}(v_S)$. Moreover, $J$ can be generated by $\alpha'_{k-1}\ldots\alpha'_1\alpha'_n\ldots\alpha'_{k+1}\alpha'_k$ ($i_k\in E_4$) and $\alpha'_k\ldots\alpha'_1\alpha'_n\ldots\alpha'_{k+1}\\\alpha'_k$ ($i_k\in E_1\setminus E_4$). For any band in $\overline{B}$, we have the following claim.
		
{\it Claim:} For any band $b$ in $\overline{B}$, $b$ does not have the substring $\alpha'_{k-1}\ldots\alpha'_1\alpha'_n\ldots\alpha'_{k+1}\alpha'_k$ for any $i_k$ in $E_4$, and $b$ does not have the substring $\alpha'_{k}\ldots\alpha'_1\alpha'_n\ldots\alpha'_{k+1}\alpha'_k$ for any $i_k$ in $E_1\setminus E_4$  (possibly after rotation or taking inverse of $b$). That is, any band in $\overline{B}$ gives in fact a band in the quotient algebra $C$.

If the above claim is true, then, the ideal $J$ satisfies the condition in Lemma \ref{same-number}, and therefore $C$ is not of polynomial growth. It follows that $\overline{\mathrm{gr}(A)}$ is not of polynomial growth. This is our desired result.
		
{\it Proof of Claim.} Suppose on the contrary that $b$ has the substring $\alpha'_{k-1}\ldots\alpha'_1\alpha'_n\ldots\alpha'_{k+1}\alpha'_k$ for some $i_k\neq i_1$ and $b=c_s\ldots c_1\alpha'_{k-1}\ldots\alpha'_1\alpha'_n\ldots\alpha'_{k+1}\alpha'_k$. Since $s(b)=t(b)$ and there is only one arrow starting and ending at vertex $i_k$ for $2\leq k\leq n$, $s(\alpha'_k)=t(c_s)=i_k$ and $c_s=\alpha'_{k-1}$. We rotate $b$ to $c_{s-1}\ldots c_1\alpha'_{k-1}\ldots\alpha'_1\alpha'_n\ldots\alpha'_{k}\alpha'_{k-1}$. If $i_{k-1}\neq i_1$, then we can repeat the above step and therefore we can assume that $b$ has the substring $\alpha'_1\alpha'_n\ldots\alpha'_2\alpha'_1$. We may assume that $b=c_s\ldots c_1\alpha'_1\alpha'_n\ldots\alpha'_2\alpha'_1$. Since $s(\alpha'_1)=t(c_s)$ and there is only one arrow starting and ending at vertex $i_k$ for $2\leq k\leq n$, we have that $b$ has the substring $\alpha'_n\ldots\alpha'_2\alpha'_1\alpha'_n\ldots\alpha'_2\alpha'_1$. It contradicts the fact  that $\alpha'_n\ldots\alpha'_2\alpha'_1\alpha'_n\ldots\alpha'_2\alpha'_1$ is in the ideal $I_{B,1}$ (indeed it is an element of $\soc(B)$). This finishes the proof of our claim.
\end{proof}

We give an example to illustrate the above result.

\begin{Ex}
Let $G$ be the following Brauer tree with $m_0=3$.		
$$\xymatrix{
			& & \cdot	\\
			v_0\ar@{-}[r]^{1}& \cdot\ar@{-}[ur]^{4}\ar@{-}[r]^{3}\ar@{-}[dr]_{2}& \cdot\\
			& & \cdot}$$
		
Let $A=kQ/I$ be the Brauer tree algebra associated with $G$ and $\mathrm{gr}(A)$ the associated graded algebra of $A$. The quiver $Q$ is as follows.
$$\xymatrix@r{
			1\ar@(dl,ul)\ar@/^0.63pc/[r]& 2\ar@/^0.63pc/[d]\\
			4\ar@/^0.63pc/[u]&  3\ar@/^0.63pc/[l].}
$$
\\	
The regular representation of $\mathrm{gr}(A)$ is as follows.
		
$$\begin{picture}(30.00,60.00)
	\unitlength=1.00mm \special{em:linewidth 0.4pt}
		\linethickness{0.4pt}
		\put(-12,20){$1$} \put(-8,15){$2$} \put(-8,10){$3$}\put(-8,5){$4$}\put(-16,13){$1$}\put(-16,6){$1$}
		\put(-8,0){$1$}
		\put(-4,12){$\oplus$}
		\put(0,20){$2$}
		\put(0,15){$3$} \put(0,10){$4$} \put(0,5){$1$}
		\put(0,0){$2$}
		\put(4,12){$\oplus$}
		\put(8,20){$3$} \put(8,15){$4$}  \put(8,10){$1$} \put(8,5){$2$} \put(8,0){$3$}
		\put(12,12){$\oplus$}
		\put(16,20){$4$} \put(16,15){$1$}  \put(16,10){$2$} \put(16,5){$3$} \put(16,0){$4$}
\end{picture}$$
\\
Note that $\mathrm{gr}(A)=\overline{\mathrm{gr}(A)}=C$. We have that $G'$ is the following Brauer graph
$$
\xymatrix@R-1em{
	& & v_5	&\\
	v_1	\ar@{-}[r]^{1} & v_2\ar@{-}[ur]^{4}\ar@{-}[dr]_{2}\ar@{-}[r]^{3}&v_3 \\
	& & v_4}	
$$
where $m(v_1)=3$, $m(v_2)=2$ and $m(v_3)=m(v_4)=m(v_5)=1$.	
The regular representation of the corresponding Brauer graph algebra $B$ is as follows.
		
		$$\begin{picture}(30.00,110.00)
		\unitlength=1.00mm \special{em:linewidth 0.4pt}
		\linethickness{0.4pt}
		\put(-12,40){$1$} \put(-8,35){$2$} \put(-8,30){$3$}\put(-8,25){$4$}\put(-16,25){$1$}\put(-16,11){$1$}
		\put(-8,20){$1$}\put(-8,15){$2$} \put(-8,10){$3$}\put(-8,5){$4$}
		\put(-12,0){$1$}
		\put(-4,22){$\oplus$}
		\put(0,40){$2$}
		\put(0,35){$3$} \put(0,30){$4$} \put(0,25){$1$}
		\put(0,20){$2$}  \put(0,15){$3$} \put(0,10){$4$} \put(0,5){$1$}\put(0,0){$2$}
		\put(4,22){$\oplus$}
		\put(8,40){$3$} \put(8,35){$4$}  \put(8,30){$1$} \put(8,25){$2$} \put(8,20){$3$} \put(8,15){$4$}  \put(8,10){$1$} \put(8,5){$2$} \put(8,0){$3$}
		\put(12,22){$\oplus$}
		\put(16,40){$4$} \put(16,35){$1$}  \put(16,30){$2$} \put(16,25){$3$} \put(16,20){$4$} \put(16,15){$1$}  \put(16,10){$2$} \put(16,5){$3$} \put(16,0){$4$}
		\end{picture}$$
		
		Note that $\overline{B}=B/\soc(P_1)$ and $C\cong\overline{B}/(\rad^{5}(P_1)\oplus \rad^{5}(P_2)\oplus \rad^{5}(P_3)\oplus \rad^{5}(P_4))$, where $P_i$ is the projective cover of the simple $\overline{B}$-module $S_i$ corresponding to the edge $i$ in $G'$. Since $B$ is not of polynomial growth and $\overline{B}$ is not of polynomial growth, $C$ is not of polynomial growth and therefore $gr(A)$ is not of polynomial growth.					
\end{Ex}

\begin{Lem}
		Let $\overline{\mathrm{gr}(A)}=kQ/I_2$ be defined in $(2.2)$. If $m_0=1$ and $\kappa_1=1$, or  $m_0=2$, $\kappa_0=1$ and $\kappa_1=0$, then $\mathrm{gr}(A)$ is $1$-domestic and therefore the cardinality of $\mathrm{Ba}(\overline{\mathrm{gr}(A)})$ is $1$.
	\end{Lem}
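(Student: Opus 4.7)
The plan is to show that the cardinality of $\mathrm{Ba}(\overline{\mathrm{gr}(A)})$ is $1$ in each case; by the second corollary following Theorem \ref{main-result}, this then yields that $\mathrm{gr}(A)$ is $1$-domestic. Since $G$ is a Brauer tree in both situations, $A$ is a Brauer tree algebra and hence representation-finite, so $\mathrm{Ba}(\overline{A})$ is empty. The isomorphism $\overline{A} \cong \overline{\mathrm{gr}(A)}/(\bigoplus_{i \in \mathbb{W}} r_i)$ then forces every band of $\overline{\mathrm{gr}(A)}$ to contain at least one longer path $r_i$ ($i \in \mathbb{W}$) as a substring. The common strategy is to identify the set $S \subseteq \mathbb{W}$ of those unbalanced edges at which the $\star$-condition fails; by Lemma \ref{finite-number}(3) the quotient
\[ C \;=\; \overline{\mathrm{gr}(A)} \bigg/ \left( \bigoplus_{k \in \mathbb{W}\setminus S} r_k \right) \]
has the same representation type and the same set of bands (up to $\sim_A$) as $\overline{\mathrm{gr}(A)}$, so one may work inside $C$.

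For the first case ($m_0=1,\ \kappa_1=1$), let $(i_0,j_0)$ be the unique unbalanced edge pair. I claim $S=\{i_0,j_0\}$: for every other unbalanced edge $k$, conditions $(1)$ and $(2)$ of $\star$-condition are immediate since $G$ is a tree with $m\equiv 1$, and condition $(3)$ follows from $\kappa_1=1$, because any failure would produce a second unbalanced edge pair. Let $W$ denote the unique walk in $G_{i_0,S}$ from $v_S^{(i_0)}$ to $v_S^{(j_0)}$, and let $\sigma,\sigma^\vee$ be the two simple strings in $\overline{\mathrm{gr}(A)}$ associated to $W$ and $W^{-1}$ via Remark \ref{construct-simple-string}. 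The proposed band is the closed string $B = r_{j_0}\,\sigma\,r_{i_0}\,\sigma^\vee$ at the vertex $i_0 \in Q_0$. The validity of every power $B^n$ follows from the observation that at the two junctions $i_0$ and $j_0$, the shorter cycles $q_{i_0},q_{j_0}$ vanish in $I'$, so the only continuations available at these vertices are precisely those appearing in $B$.

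For the second case ($m_0=2,\ \kappa_0=1,\ \kappa_1=0$), let $i_0$ be the unique unbalanced edge with $v_0\in G_{i_0,S}$; then $S=\{i_0\}$. Indeed, for every other unbalanced edge $k$, condition $(2)$ holds because $v_0 \in G_{k,L}$ forces $m\equiv 1$ on $V(G_{k,S})$, while condition $(3)$ uses $\kappa_1=0$. The band takes the analogous form $B = r_{i_0}\,\sigma\,\tau\,\sigma^\vee$, where $\sigma,\sigma^\vee$ come from the (degree-decreasing, by $\kappa_1=0$) walk $W$ in $G_{i_0,S}$ from $v_S^{(i_0)}$ to $v_0$, and $\tau$ is the twist at $v_0$ provided by the short cycle surviving there: if $\mathrm{val}(v_0)=1$ then $\tau$ involves the loop $\lambda$ at $v_0$ (with $\lambda^2=0$ in $I'$), and if $\mathrm{val}(v_0)\geq 2$ it is the analogous inverse arrow traversing the short cycle at $v_0$.

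The principal obstacle is uniqueness: one must show that any band of $C$ equals $B$ up to the equivalence $\sim_A$. Such a band contains some $r_k$ ($k\in S$) as substring, and starting from this substring the forced continuations are tracked vertex by vertex. The special biserial relations in $I_2$, combined with the vanishing of the shorter special cycles along $W$ (guaranteed by the $\star$-condition at the non-critical edges, ultimately by the hypotheses $\kappa_1 \leq 1$ and $\kappa_0 \leq 1$), force a unique continuation at each intermediate vertex of $W$ as well as a unique switching at each junction ($i_0,j_0$ in the first case, or $i_0,v_0$ in the second). Collecting these forced choices shows that any band of $C$ coincides with $B$ up to rotation and inversion; hence the cardinality of $\mathrm{Ba}(\overline{\mathrm{gr}(A)})$ is $1$, as required.
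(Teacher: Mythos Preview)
Your reduction to the quotient $C=\overline{\mathrm{gr}(A)}/(\bigoplus_{k\in\mathbb{W}\setminus S}r_k)$ via Lemma~\ref{finite-number}(3), including the identification of $S$, matches the paper's first step (there $\mathbb{W}\setminus S$ is called $E_3$, respectively $E_2$). The approaches diverge afterwards. Rather than counting bands in $C$ directly, the paper constructs an auxiliary Brauer graph $G'$ on the same underlying tree by setting $m(v_L^{(i)})=m(v_L^{(j)})=2$ (first case) or $m(v_L^{(i)})=m(v_0)=2$ (second case), with $m\equiv 1$ elsewhere. The associated Brauer graph algebra $B$ is $1$-domestic by Theorem~\ref{Brauer-graph-domestic}(a)(1), and the paper exhibits $C$ as a quotient of $\overline{B}$ by an explicit radical ideal. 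Since $C$ is representation-infinite, it is therefore $1$-domestic, and $|\mathrm{Ba}|=1$ follows from the elementary fact that an $n$-domestic string algebra has at most $n$ bands.

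Your proposal has a minor issue and a substantive gap. The minor issue: citing the corollary after Theorem~\ref{main-result} is circular, since that corollary depends (via Proposition~\ref{Brauer-tree-domestic}) on the present lemma; the direction you actually need is elementary for string algebras and should be invoked directly.

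The substantive gap is the uniqueness argument. Your assertion that continuations are ``forced'' at intermediate vertices of $W$ is not correct as stated: at an edge $e$ on $W$ where a side branch of $G$ meets the walk, after arriving at $e$ along one special cycle one may either continue around that cycle or switch, via an inverse arrow, into the cycle at the branching vertex. Ruling out such side excursions requires showing they cannot close up into a band, which needs a global argument from the tree structure and the $\star$-condition at the non-critical edges---essentially the machinery of Lemmas~\ref{subword-cycle-1} and~\ref{not-star-condition}, which you do not invoke. There is also a bookkeeping problem with your explicit band: the simple strings of Remark~\ref{construct-simple-string} associated with the walk from $v_S^{(i_0)}$ to $v_S^{(j_0)}$ start and end at the first and last \emph{edges} of that walk (edges lying in $G_{i_0,S}$), not at $i_0$ or $j_0$; so the concatenation $\sigma\, r_{i_0}$ is undefined without inserting further connecting arrows. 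The paper's comparison with $\overline{B}$ sidesteps both difficulties.
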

	\begin{proof}
		If $m_0=1$ and $\kappa_1=1$, then there are only two unbalanced edges $v^{(i)}_S\defeqi v^{(i)}_L$ and $v^{(j)}_S\defeqj v^{(j)}_L$ in $G$ such that $j$ is in $G_{i,S}$ and $d_G(v^{(j)}_S,v^{(i)}_S)+1=d_G(v^{(j)}_L, v^{(i)}_S)$.
	Let $i_1<i_2<\cdots<i_t<i_1$ (resp. $ j_1<j_2<\cdots<j_{t_1}<j_1)$ be the cyclic ordering at $v^{(i)}_L$ (resp. $ v^{(j)}_L)$, where $i_1=i$ (resp. $ j_1=j)$ and $t=\mathrm{val}(v^{(i)}_L)$ (resp. $t_1=\mathrm{val}(v^{(j)}_L))$. Note that  $t>2$ and $t_1>2$. Let $E_1=\{i_1,i_2,\ldots,i_t\}$  and $ E_2=\{j_1,j_2,\ldots,j_{t_1}\}$, where the edge $i_k$ is incident to $v^{(i)}_L$ and $v'_k$ for any $2\leq k\leq t$, and the edge $j_k$ is incident to $v^{(j)}_L$ and $w'_k$ for any $2\leq k\leq t_1$. We denote by $E_3$ the set of all unbalanced edges different from $i$ and $j$ in $G$, and by $r_k$ the element in $\mathbb{P}$ corresponding to the unbalanced edge $k$ in $E_3$. There are algebra homomorphisms as follows.
		$$\mathrm{gr}(A)\twoheadrightarrow\overline {\mathrm{gr}(A)}\twoheadrightarrow \overline {\mathrm{gr}(A)}/(\bigoplus\limits_{k\in E_3}
		r_k).$$ 
		
		Since $m_0=1$ and $\kappa_1=1$, we have that  $\mathrm{grd}(u)\geq \mathrm{grd}(v)$ for any edge $u\defeqre v$ different from $i$ and $j$ in $G$ satisfying  $d_G(u,v^{(i)}_S)+1=d_G(v,v^{(i)}_S)$, and the unique walk $[v_{1},a_{1},v_{2},\ldots, v_{k-1}, a_{k-1},v_{k}]$ from $v^{(i)}_S$ to $v^{(j)}_S$ satisfies $\mathrm{grd}(v_1)=\mathrm{grd}(v_2)=\ldots=\mathrm{grd}(v_k)$, where $v_1=v^{(i)}_S$, $v_k=v^{(j)}_S$, and $a_i$ is an edge incident to the vertices $v_i$ and $v_{i+1}$ for each $1\leq i\leq k-1$.
	 Then $G$ satisfies $\star$-condition with respect to any unbalanced edge in $E_3$. Therefore, by Lemma \ref{finite-number}, $\overline {\mathrm{gr}(A)}$ and $\overline {\mathrm{gr}(A)}/(\bigoplus_{k\in E_3}
		r_k)$ have the same representation type. In particular, since $\kappa_0(m_0-1)+\kappa_1=1\neq0$, they are of infinite representation type.
		
 Let $G'=(V(G),E(G),m)$ be a Brauer graph, where $m(v^{(i)}_L)=m(v^{(j)}_L)=2$ and $m(v)=1$ for the other vertices $v$. Let $B$ be the Brauer graph algebra associated with the new Brauer graph $G'$ and $\overline{B}$ the corresponding string algebra defined in $(2.1)$. Note that the quiver of $\overline{B}$ is also $Q$. By the construction of the Brauer graph $G'$ and Theorem \ref{Brauer-graph-domestic}, we have that $B$ and $\overline{B}$ are $1$-domestic. 
 
 Let $E_4=\{i_k|\mathrm{grd}(v^{(i)}_L)\geq \mathrm{grd}(v_k'), 1<\mathrm{val}(v_k'), 2\leq k\leq t\}$ and $E_5=\{j_k|\mathrm{grd}(v^{(j)}_L)\geq \mathrm{grd}(w_k'), 1<\mathrm{val}(w_k'), 2\leq k\leq t_1\}$ (which are also defined in the original Brauer graph $G$), where the edge $i_k$ (resp. $j_k$) is incident to $v^{(i)}_L$ (resp. $v^{(j)}_L$) and $v'_k$ (resp. $w'_k$) in $G$ for any $2\leq k\leq t$ (resp. $2\leq k\leq t_1$). We can get the following algebra isomorphism from their constructions
			$$\overline {\mathrm{gr}(A)}/(\bigoplus\limits_{k\in E_3}
			r_k)\cong \overline{B}/(\bigoplus\limits_{k\in E_1\setminus E_4}\rad^{t+1}(P_k)\oplus\bigoplus\limits_{k\in E_4} \rad^{t}(P_k)\oplus\bigoplus\limits_{k\in E_2\setminus E_5}\rad^{t_1+1}(P_k)\oplus\bigoplus\limits_{k\in E_5} \rad^{t_1}(P_k)),$$
		\\ where $P_{k}$ is the projective cover of the simple $\overline{B}$-module $S_{k}$ corresponding to the edge $k$ in $G'$.
		
Since $\overline{B}$ is $1$-domestic and $\overline {\mathrm{gr}(A)}/(\bigoplus_{k\in E_3}r_k)$ is of infinite representation type, we have that $\overline {\mathrm{gr}(A)}/(\bigoplus_{k\in E_3}r_k$) is $1$-domestic and therefore $\mathrm{gr}(A)$ is $1$-domestic. Hence, the cardinality of $\mathrm{Ba}(\overline{\mathrm{gr}(A)})$ is $1$.	
	
If $m_0=2$, $\kappa_0=1$ and $\kappa_1=0$, then there is only one unbalanced edge $v_S\defeqi v_L$ in $G$ such that $v_0$ is in $G_{i,S}$. Similarly as above, let $i_1<i_2<\cdots<i_t<i_1$ be the cyclic ordering at $v_L$, where $i_1=i$ and $t=\mathrm{grd}(v_L)=\mathrm{val}(v_L)$. Let $E_1=\{i_1,i_2,\ldots,i_t\}$, where the edge $i_k$ is incident to $v_L$ and $v'_k$ for any $2\leq k\leq t$. We denote by $E_2$ the set of all unbalanced edges different from $i$ in $G$, and by $r_k$ the element in $\mathbb{P}$ corresponding to the unbalanced edge $k$ in $E_2$.

Let $G'=(V(G),E(G),m)$ be a Brauer graph, where $m(v_L)=m(v_0)=2$ and $m(v)=1$ for the other vertices $v$. Let $B$ be the Brauer graph algebra associated with the new Brauer graph $G'$ and $\overline{B}$ the corresponding string algebra defined in $(2.1)$. Note that $B$ and $\overline{B}$ are $1$-domestic. 

Let $E_3=\{i_k|\mathrm{grd}(v_L)\geq \mathrm{grd}(v_k'), 1<\mathrm{val}(v_k'), 2\leq k\leq t\}$ (which is also defined in the original Brauer graph $G$), where the edge $i_k$ is incident to $v_L$ and $v'_k$ in $G$ for any $2\leq k\leq t$. We can get the following algebra isomorphism  

$$\overline {\mathrm{gr}(A)}/(\bigoplus\limits_{k\in E_2}
r_k)\cong \overline{B}/(\bigoplus\limits_{k\in E_1\setminus E_3}\rad^{t+1}(P_k)\oplus\bigoplus\limits_{k\in E_3} \rad^{t}(P_k)),$$
\\ where $P_{k}$ is the projective cover of the simple $\overline{B}$-module $S_{k}$ corresponding to the edge $k$ in $G'$. We also have that $\mathrm{gr}(A)$ is $1$-domestic and the cardinality of $\mathrm{Ba}(\overline{\mathrm{gr}(A)})$ is $1$.
\end{proof}

We give an example to illustrate the above result.
\begin{Ex}
Let $G$ be the following Brauer tree with $m_0=2$.		
	$$\xymatrix@R-1em{
		& & \cdot\ar@{-}[r]^{4}	&\cdot\\
		v_0	\ar@{-}[r]^{1} & \cdot\ar@{-}[ur]^{3}\ar@{-}[dr]_{2}& \\
		& & \cdot}	
	$$	
Note that $\kappa_0=1$ and $\kappa_1=0$.
	
Let $A=kQ/I$ be the Brauer tree algebra associated with $G$ and $\mathrm{gr}(A)$ the associated graded algebra of $A$. The quiver $Q$ is as follows.
	
	$$\xymatrix@R-0.8em@r{
		1\ar@/^0.63pc/[r]^{\beta_0}\ar@(ul,dl)_{\alpha_0}& 2\ar@/^0.63pc/[d]_{\beta_1}& \\
		&3\ar@/^0.63pc/[ul]^{\beta_2}\ar@/^0.63pc/[r]^{\gamma_0}&4\ar@/^0.63pc/[l]^{\gamma_1}}
	$$	
The regular representation of $\mathrm{gr}(A)$ is as follows.
	$$\begin{picture}(30.00,50.00)
		\unitlength=1.00mm \special{em:linewidth 0.4pt}
		\linethickness{0.4pt}
		\put(-12,15){$1$} \put(-8,10){$2$} \put(-8,5){$3$}\put(-8,0){$1$}\put(-16,8){$1$}
		\put(-4,9){$\oplus$}
		\put(0,15){$2$}
		\put(0,10){$3$} \put(0,5){$1$} \put(0,0){$2$}
		\put(4,9){$\oplus$}
		\put(12,15){$3$} \put(8,10){$1$}  \put(8,5){$2$} \put(8,0){$3$} \put(16,10){$4$}
		\put(20,9){$\oplus$}
		\put(24,15){$4$} \put(24,10){$3$}   \put(24,5){$4$}
	\end{picture}$$
	\\
Note that $\mathrm{gr}(A)=\overline{\mathrm{gr}(A)}$. Moreover, $b:=\alpha^{-1}_0\beta_2\beta_1\beta_0$ is the unique band in $\overline{\mathrm{gr}(A)}$ (after rotation or taking inverse). We have that $G'$ is the following Brauer graph
	
			$$\xymatrix@R-1em{
		& & v_4\ar@{-}[r]^{4}	&v_5\\
		v_1	\ar@{-}[r]^{1} & v_2\ar@{-}[ur]^{3}\ar@{-}[dr]_{2}& \\
		& & v_3}	
	$$	
	where $m(v_1)=2$, $m(v_2)=2$ and $m(v_3)=m(v_4)=m(v_5)=1$.	
The regular representation of the corresponding Brauer graph algebra $B$ is as follows.
$$\begin{picture}(10.00,90.00)
\unitlength=1.00mm \special{em:linewidth 0.4pt}
\linethickness{0.4pt}
\put(-12,30){$1$} \put(-8,25){$2$} \put(-8,20){$3$}\put(-8,15){$1$}\put(-16,25){$1$}
\put(-8,10){$2$}
\put(-8,5){$3$} 
\put(-12,0){$1$}
\put(-4,15){$\oplus$}
\put(0,30){$2$}
\put(0,25){$3$} \put(0,20){$1$} \put(0,15){$2$}
\put(0,10){$3$}  \put(0,5){$1$} \put(0,0){$2$} 
\put(4,15){$\oplus$}
\put(12,30){$3$} \put(8,25){$1$}  \put(8,20){$2$} \put(8,15){$3$} \put(8,10){$1$} \put(8,5){$2$}  \put(12,0){$3$} \put(16,20){$4$} 
\put(20,15){$\oplus$}
\put(24,30){$4$} \put(24,15){$3$}  \put(24,0){$4$} 
\end{picture}$$

Note that $\overline{B}=B/(\soc(P_1)\oplus \soc(P_3))$ and $\overline{\mathrm{gr}(A)}/r_3\cong\overline{B}/(\rad^{4}(P_1)\oplus \rad^{4}(P_2)\oplus \rad^{3}(P_3)),$ where $P_i$ is the projective cover of the simple $\overline{B}$-module $S_i$ corresponding to the edge $i$ in $G'$. Since $B$ is $1$-domestic, $\overline{B}$ is $1$-domestic, and $\overline{\mathrm{gr}(A)}/r_3$ is of infinite representation type, we have that   $\overline{\mathrm{gr}(A)}/r_3$ is $1$-domestic and therefore $\mathrm{gr}(A)$ is $1$-domestic.					
\end{Ex}	

\begin{Lem}\label{Brauer-tree-nondomestic}
Let $\overline{\mathrm{gr}(A)}=kQ/I_2$ defined in $(2.2)$. If $m_0=2$, $\kappa_0\geq2$ or $m_0=2$, $\kappa_1\neq0$, then the cardinality of $\mathrm{Ba}(\overline{\mathrm{gr}(A)})$ is infinite and $\mathrm{gr}(A)$ is not of polynomial growth.
\end{Lem}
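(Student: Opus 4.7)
The plan is to reduce, in both cases, to a suitably modified Brauer graph algebra whose representation type is already known to be non-polynomial growth, and then transfer the property back to $\overline{\mathrm{gr}(A)}$ via Lemma~\ref{same-number}. This parallels the strategy used in the proof of Proposition~\ref{Brauer-tree-not-polynomial} and in the preceding lemma.

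First, by the hypothesis together with Lemma~\ref{e_1} in the case $\kappa_1 \neq 0$, I locate two distinct unbalanced edges $v_S^{(i)} \defeqi v_L^{(i)}$ and $v_S^{(j)} \defeqj v_L^{(j)}$ of $G$ such that $v_0 \in G_{i,S}$ (and, in the case $\kappa_0 \geq 2$, also $v_0 \in G_{j,S}$; in the case $\kappa_1 \neq 0$, the edge $j$ lies in $G_{i,S}$ by definition of an unbalanced edge pair). Then I define the auxiliary Brauer tree $G' = (V(G), E(G), m')$ on the same underlying tree with multiplicities $m'(v_0) = m'(v_L^{(i)}) = m'(v_L^{(j)}) = 2$ and $m'(v) = 1$ otherwise, handling by hand the few subcases in which two of these distinguished vertices coincide (and then introducing a third multiplicity-$2$ vertex from the remaining unbalanced data). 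Since $G'$ is a tree with at least three vertices of multiplicity $\geq 2$, Theorem~\ref{Brauer-graph-domestic} ensures that the Brauer graph algebra $B$ attached to $G'$ is not $n$-domestic for any $n\geq 1$; being self-injective and special biserial, Theorem~\ref{polynomial-domestic} then gives that $B$, hence also $\overline{B}$, is not of polynomial growth, and Theorem~\ref{not-of-polynomial} shows that $\mathrm{Ba}(\overline{B})$ is infinite.

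Next, let $E$ denote the set of unbalanced edges of $G$ different from $i$ and $j$, and form the quotient $C := \overline{\mathrm{gr}(A)}/(\bigoplus_{k\in E} r_k)$, where $r_k \in \mathbb{P}$ is the longer path attached to the unbalanced edge $k$. Following the template of the isomorphism appearing in the proof of Proposition~\ref{Brauer-tree-not-polynomial}, I identify $C$ with a quotient $\overline{B}/J$, in which $J \subseteq \rad^{2}(\overline{B})$ is generated by appropriate radical powers of the special cycles at the vertices of $G'$ whose multiplicities were adjusted, chosen so that the lengths of the remaining paths in $\overline{\mathrm{gr}(A)}$ and in $\overline{B}/J$ match. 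Then I verify, by a Claim exactly analogous to the one in Proposition~\ref{Brauer-tree-not-polynomial}, that no band of $\overline{B}$ can contain a generator of $J$ as a substring (up to rotation or inversion), because any such generator is effectively a socle element of $\overline{B}$. Granting this Claim, Lemma~\ref{same-number} (applied with $\Lambda = \overline{B}$ and $\Gamma = \overline{B}/J \cong C$) yields that $\overline{B}$ and $C$ have the same representation type, so $\mathrm{Ba}(C)$ is infinite and $C$ is not of polynomial growth. Since $C$ is a quotient of $\overline{\mathrm{gr}(A)}$, these bands lift to pairwise non-equivalent bands of $\overline{\mathrm{gr}(A)}$, so $\mathrm{Ba}(\overline{\mathrm{gr}(A)})$ is infinite and $\mathrm{gr}(A)$ is not of polynomial growth.

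The main obstacle is Step 3 together with the substring-avoidance Claim of Step 4. The relative positions of $v_0$, $v_L^{(i)}$, $v_L^{(j)}$ and the edges incident to them vary with the shape of $G$, and the ideal $J$ has to be defined with precisely the right radical powers at each multiplicity-$2$ vertex of $G'$ so that the quotient $\overline{B}/J$ matches $C$ on the nose; keeping this bookkeeping straight across subcases and then running the substring-avoidance argument, entirely parallel to the one carried out explicitly in the proof of Proposition~\ref{Brauer-tree-not-polynomial}, is the delicate part of the argument.
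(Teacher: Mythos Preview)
Your approach is genuinely different from the paper's, and it has a real gap. The paper does not pass through any auxiliary Brauer graph algebra at all: it simply writes down two explicit bands $b_1,b_2$ in $\overline{\mathrm{gr}(A)}$, built from the special cycles at $v_0$, $v_L^{(i)}$, $v_L^{(j)}$ together with simple strings connecting them, arranged so that $b_1$ and $b_2$ share a common directed substring followed by the same inverse arrow, and then invokes Lemma~\ref{nondomestic}. Two subcases are distinguished according to whether the walks from $v_0$ to $v_L^{(i)}$ and to $v_L^{(j)}$ start with the same edge; the case $\kappa_1\neq 0$ reduces, via Lemma~\ref{e_1}, to the second subcase.

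The gap in your argument is the substring-avoidance Claim. Your justification that each generator of $J$ is ``effectively a socle element of $\overline{B}$'' is incorrect: the generators you need have length about $\mathrm{val}(v_L^{(i)})$, whereas the socles of the relevant projectives over $\overline{B}$ sit in radical degree about $2\,\mathrm{val}(v_L^{(i)})$ because $m'(v_L^{(i)})=2$. In Proposition~\ref{Brauer-tree-not-polynomial} the analogous Claim goes through only because the graph is first \emph{truncated}, so that the edges $i_2,\ldots,i_n$ around $v_L$ become leaves and the corresponding quiver vertices have a single arrow in and out; the rotation argument then forces any band containing a generator of $J$ to wind around until it contains $(C_{v_L})^{2}$, which is a relation in $\overline{B}$. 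In your construction you keep the full tree, so the edges around $v_L^{(i)}$ may have a second non-truncated endpoint, and a band can leave through the other special cycle before ever accumulating a path of length $2\,\mathrm{val}(v_L^{(i)})$. Note also that the preceding lemma, which uses the same $\overline{B}/J$ identification you are modelling, exploits it only in the easy direction (a quotient of a $1$-domestic algebra has at most one band); you need the opposite direction, that bands of $\overline{B}$ survive to the quotient, and that is precisely what the unproven Claim is supposed to supply.
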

\begin{proof}
We have two cases to consider.
		
{\it Case 1.} If $m_0=2$ and $\kappa_0\geq2$, then there are two unbalanced edges $v^{(i)}_{S}\defeqi v^{(i)}_L$ and  $v^{(j)}_S\defeqj v^{(j)}_L$ in $G$ such that the exceptional vertex $v_0$ is a vertex in $G_{i,S}$ and it is also a vertex in $G_{j,S}$. There is a walk $[v_{1},a_{1},v_{2},a_{2},v_{3},\ldots, v_{k-1}, a_{k-1},v_{k}]$ (resp. $[v'_{1},a'_{1},v'_{2},a'_{2},v'_{3},\ldots, v'_{k'-1}, a'_{k'-1},v'_{k'}]$) from $v_0$ to $v^{(i)}_L$ (resp. $v^{(j)}_L$), where  $v_{1}=v_0$, $v_{k}=v^{(i)}_L$, $a_{k-1}=i$ (resp. $v'_1=v_0$, $v'_{k'}=v^{(j)}_L$, $a'_{k'-1}=j$) and $a_{l}$ (resp. $a'_{l}$) is an edge  incident to the vertices $v_{l}$ (resp. $v'_{l}$) and $v_{l+1}$ (resp. $v'_{l+1}$) for each $1\leq l\leq k-1$ (resp. $1\leq l\leq k'-1$).
		
$(a)$ If $a_1=a'_1$, then $Q$ contains the following subquiver
		
$$\begin{small}
\xymatrix@C-0.9em@R-0.5em{
				&\cdot \ar@/^0.63pc/[rd]_{\beta_{t_{1}}'} &                                  &  \cdot\ar@/^0.635pc/[rd]_{\beta_{2}}    &  \\
				\ldots\ar@/^0.635pc/[ru]    &       &   j\ar@/^0.63pc/[ru]_{\beta_{1}}\ar@/^0.63pc/[ld]_{\beta_{1}'} &    &\ldots\ar@/^0.63pc/[ld]\\
				&           \cdot\ar@/^0.63pc/[lu]_{\beta_{2}'}     &        & \cdot\ar@/^0.63pc/[lu]_{\beta_{s_{1}}} &
}
\xymatrix@dl@C-2em@R-1em{& & \\
				& \udots &}	
\xymatrix@l@C-0.9em@R-0.5em{
				&\cdot \ar@/^0.63pc/[rd]_{\gamma_{s_2}} &                                  &  \cdot\ar@/^0.635pc/[rd]_{\gamma_{2}'}    &  \\
				\ldots\ar@/^0.635pc/[ru]    &       &   a_1\ar@/^0.63pc/[ru]_{\gamma_{1}'}\ar@/^0.63pc/[ld]_{\gamma_{1}} &    &\ldots\ar@/^0.63pc/[ld]\\
				&           \cdot\ar@/^0.63pc/[lu]_{\gamma_{2}}     &        & \cdot\ar@/^0.63pc/[lu]_{\gamma_{t_2}'} &\quad
}
\xymatrix@dl@C-3em@R-2em{& & \\
		& & \ddots& \\
}
\xymatrix@C-0.8em@R-0.5em{
				&\cdot \ar@/^0.63pc/[rd]_{\alpha'_{t}} &                                  &  \cdot\ar@/^0.635pc/[rd]_{\alpha_{2}}   &  \\
				\ldots\ar@/^0.635pc/[ru]    &       &   i\ar@/^0.63pc/[ru]_{\alpha_{1}}\ar@/^0.63pc/[ld]_{\alpha'_{1}} &    &\ldots\ar@/^0.63pc/[ld]\\
				&           \cdot\ar@/^0.63pc/[lu]_{\alpha'_{2}}     &        & \cdot\ar@/^0.63pc/[lu]_{\alpha_{s}} &\quad,
}
\end{small}$$
where $s=\mathrm{val}(v^{(i)}_S)$, $t=\mathrm{val}(v^{(i)}_L)$, $s_1=\mathrm{val}(v^{(j)}_S)$, $t_1=\mathrm{val}(v^{(j)}_L)$, $s_2=\mathrm{val}(v_2)$, $t_2=\mathrm{val}(v_0)$,   $\alpha_{t}'\ldots\alpha_{1}'$, $\beta_{t_{1}}'\ldots\beta_{1}'$ and $\gamma_{t_2}'\ldots\gamma_{1}'$ are not in $I_2$.
		
There is a simple string $c_{k_1}\ldots c_2 c_1$ (resp. $d_{k'_1}\ldots d_2 d_1$) satisfying $c_1=\gamma^{-1}_{s_2}$ and $t(c_{k_1})=i$ (resp. $d_1=\gamma^{-1}_{s_2}$ and $t(d_{k'_1})=j$).
		
\begin{enumerate}[(1)]
\item If $c_{k_1}$ is an inverse arrow (in other words, $c_{k_1}=\beta_{1}^{-1}$), then $\beta_{t_{1}}'\ldots\beta_{1}'c_{k_1}\ldots c_2 c_{1}\gamma_{t_2}'\cdots\gamma_{1}'$ is also a string. There exists a simple string $c_{k_2}'\ldots c'_2 c_{1}'$ satisfying $c_{1}'=\beta_{s_{1}}^{-1}$ and $t(c'_{k_2})=a_1$. Then	
$$b_1:=c_{k_2}'\ldots  c'_2 c_{1}'\beta_{t_{1}}'\ldots\beta_{1}'c_{k_1}\ldots c_2 c_{1}\gamma_{t_2}'\cdots\gamma_{1}'$$
is a band with source $a_1$.			
\item If $c_{k_1}$ is an arrow (in other words, $c_{k_1}=\beta_{s_{1}}$), then ${(\beta_{1}')}^{-1}$ $\ldots{(\beta_{t_{1}}')}^{-1}c_{k_1}\ldots c_2 c_1\gamma_{t_2}'\ldots\gamma_{1}'$ is also a string. In this situation we can similarly get a band $b_1$ as in $(1)$.
\end{enumerate}
Similarly, we have two cases for $d_{k'_1}$. Then  $b_2:=d_{k'_2}'\ldots d'_2 d_{1}'\alpha'_{t}\ldots\alpha'_2\alpha'_1 d_{k'_1}\ldots d_2 d_{1}\gamma_{t_2}'\cdots\gamma_{1}'$ (or $b_2:=d_{k'_2}'\ldots d'_2 d_{1}'\\(\alpha'_{1})^{-1}\ldots(\alpha'_t)^{-1} d_{k'_1}\ldots d_2 d_{1}\gamma_{t_2}'\cdots\gamma_{1}'$) is a band with source $a_1$.
		
$(b)$ If $a_1\neq a'_1$, then $d_G(v^{(j)}_S,v^{(i)}_S)+1=d_G(v^{(j)}_L,v^{(i)}_S)$  and $j$ is in $G_{i,S}$. Therefore $(i,j)$ is an unbalanced edge pair. We have that $Q$ contains the following subquivers
		
$$\xymatrix@C-0.8em@R-0.5em@r{
			&\cdot \ar@/^0.63pc/[rd]_{\beta_{t_{1}}'} &                                  &  \cdot\ar@/^0.635pc/[rd]_{\beta_{2}}    &  \\
			\ldots\ar@/^0.635pc/[ru]    &       &   j\ar@/^0.63pc/[ru]_{\beta_{1}}\ar@/^0.63pc/[ld]_{\beta_{1}'} &    &\ldots\ar@/^0.63pc/[ld]\\
			&           \cdot\ar@/^0.63pc/[lu]_{\beta_{2}'}     &        & \cdot\ar@/^0.63pc/[lu]_{\beta_{s_{1}}} &\
}
\xymatrix{& & \\
			& \ldots\ldots &}
\xymatrix@C-0.8em@R-0.5em@r{
			&\cdot \ar@/^0.63pc/[rd]_{\alpha_{s}} &                                  &  \cdot\ar@/^0.635pc/[rd]_{\alpha_{2}'}    &  \\
			\ldots\ar@/^0.635pc/[ru]    &       &   i\ar@/^0.63pc/[ru]_{\alpha_{1}'}\ar@/^0.63pc/[ld]_{\alpha_{1}} &    &\ldots\ar@/^0.63pc/[ld]\\
			&           \cdot\ar@/^0.63pc/[lu]_{\alpha_{2}}     &        & \cdot\ar@/^0.63pc/[lu]_{\alpha_{t}'} &\quad
}$$
$$\xymatrix@C-0.8em@R-0.5em@r{
			&\cdot \ar@/^0.63pc/[rd]_{\gamma_{t_{2}}'} &                                  &  \cdot\ar@/^0.635pc/[rd]_{\gamma_{2}}    &  \\
			\ldots\ar@/^0.635pc/[ru]    &       &   a_1\ar@/^0.63pc/[ru]_{\gamma_{1}}\ar@/^0.63pc/[ld]_{\gamma_{1}'} &    &\ldots\ar@/^0.63pc/[ld]\\
			&           \cdot\ar@/^0.63pc/[lu]_{\gamma_{2}'}     &        & \cdot\ar@/^0.63pc/[lu]_{\gamma_{s_{2}}} &
}
\xymatrix{& & \\
			& \ldots\ldots &}
\xymatrix@C-0.8em@R-0.5em@r{
			&\cdot \ar@/^0.63pc/[rd]_{\alpha_{s}} &                                  &  \cdot\ar@/^0.635pc/[rd]_{\alpha_{2}'}    &  \\
			\ldots\ar@/^0.635pc/[ru]    &       &   i\ar@/^0.63pc/[ru]_{\alpha_{1}'}\ar@/^0.63pc/[ld]_{\alpha_{1}} &    &\ldots\ar@/^0.63pc/[ld]\\
			&           \cdot\ar@/^0.63pc/[lu]_{\alpha_{2}}     &        & \cdot\ar@/^0.63pc/[lu]_{\alpha_{t}'} &\quad ,
}$$
where $s=\mathrm{val}(v^{(i)}_S)$, $t=\mathrm{val}(v^{(i)}_L)$,  $s_1=\mathrm{val}(v^{(j)}_S)$, $t_1=\mathrm{val}(v^{(j)}_L)$, $t_2=\mathrm{val}(v_0)$, $s_2=\mathrm{val}(v_2)$,  $\alpha_{t}'\ldots\alpha_{1}'$, $\beta_{t_{1}}'\ldots\beta_{1}'$ and $\gamma_{t_2}'\ldots\gamma_{1}'$ are not in $I_2$.
		
There is a simple string $c_{k_1}\ldots c_2 c_1$ (resp. $d_{k'_1}\ldots d_2 d_1$) satisfying $c_1=\alpha^{-1}_{s}$ and $t(c_{k_1})=j$ (resp. $d_1=\alpha^{-1}_s$ and $t(d_{k'_1})=a_1$).
		
\begin{enumerate}[(1)]
\item If $c_{k_1}$ is an inverse arrow (in other words, $c_{k_1}=\beta_{1}^{-1}$), then $\beta_{t_{1}}'\ldots\beta_{1}'c_{k_1}\ldots c_2 c_{1}\alpha'_{t}\cdots\alpha'_{1}$ is also a string. There exists a simple string $c_{k_2}'\ldots c'_2 c_{1}'$ satisfying $c_{1}'=\beta_{s_{1}}^{-1}$ and $t(c'_{k_2})=i$. Then	
$$b_1:=c_{k_2}'\ldots  c'_2 c_{1}'\beta_{t_{1}}'\ldots\beta_{1}'c_{k_1}\ldots c_2 c_{1}\alpha_{t}'\cdots\alpha_{1}'$$
is a band with source $i$.
\item If $c_{k_1}$ is an arrow (in other words, $c_{k_1}=\beta_{s_{1}}$), then ${(\beta_{1}')}^{-1}$ $\ldots{(\beta_{t_{1}}')}^{-1}c_{k_1}\ldots c_2 c_1\alpha'_{t}\cdots\alpha'_{1}$ is also a string. In this situation we can similarly get a band $b_1$ as in $(1)$.
\end{enumerate}

Similarly, we have two cases for $d_{k'_1}$. Then  $b_2:=d_{k'_2}'\ldots d'_2 d_{1}'\gamma'_{t_2}\ldots\gamma'_2\gamma'_1 d_{k'_1}\ldots d_2 d_{1}\alpha_{t}'\cdots\alpha_{1}'$ (or $b_2:=d_{k'_2}'\ldots d'_2 d_{1}'\\(\gamma'_{1})^{-1}\ldots(\gamma'_{t_2})^{-1} d_{k'_1}\ldots d_2 d_{1}\alpha_{t}'\cdots\alpha_{1}'$) is a band with source $i$.
		
In either case, we have that two distinct bands $b_1$ and $b_2$ in $\overline{\mathrm{gr}(A)}$ and  $b_1$ and $b_2$ satisfy the condition of Lemma \ref{nondomestic} by construction. Therefore, the cardinality of $\mathrm{Ba}(\overline{\mathrm{gr}(A)})$ is infinite and $\mathrm{gr}(A)$ is not of polynomial growth.
		
{\it Case 2.} If  $m_0=2$ and $\kappa_1\neq0$, by Lemma \ref{e_1}, then there are two unbalanced edges $v^{(i)}_S\defeqi v^{(i)}_L$ and  $v^{(j)}_S\defeqj v^{(j)}_L$ in $G$ such that $v_0$ is in $G_{i,S}$ and $(i,j)$ is an unbalanced edge pair. It is similar to the above case $(b)$. We still get our desired result. 
\end{proof}
We give an example to illustrate the above result.
\begin{Ex}
Let  $G$ be the following Brauer tree with $m_0=2$.	
$$\xymatrix@R-1em{
			\cdot\ar@{-}[dr]^{2}	& & & \cdot& 	\\
			&  \cdot \ar@{-}[r]^{1} &\cdot \ar@{-}[r]^{6}\ar@{-}[ur]^{7}\ar@{-}[dr]_{5}&\cdot & \\
			\cdot\ar@{-}[ur]^{3}\ar@{-}[r]^{4}	&v_0 & &\cdot &  & }
$$		

Let $A=kQ/I$ be the Brauer graph algebra associated with $G$ and $\mathrm{gr}(A)$ the associated graded algebra of $A$. The quiver $Q$ is as follows.	
$$\xymatrix@r{
			& 2\ar@/_0.63pc/[d]^{\beta_1} & 1\ar@/_0.63pc/[l]_{\beta_0} \ar@/^0.63pc/[r]^{\alpha_0} &5\ar@/^0.63pc/[d]^{\alpha_1} &\\
			4\ar@(ul,dl)_{\delta_0}\ar@/_0.63pc/[r]_{\gamma_1}	& 3\ar@/_0.63pc/[l]_{\gamma_0} \ar@/_0.4pc/[ur]^{\beta_2} & 7\ar@/^0.1pc/[u]_{\alpha_3} &
			6\ar@/^0.63pc/[l]^{\alpha_2} &  &}
$$	

We have that  $b_1=\gamma^{-1}_1\beta_1\beta_0\beta_2\gamma^{-1}_0\delta_0$ and $b_2=\gamma^{-1}_1\beta_1\beta_0\alpha^{-1}_0\alpha^{-1}_1\alpha^{-1}_2\alpha^{-1}_3\beta_2\gamma^{-1}_0\delta_0$ are bands in $\overline{\mathrm{gr}(A)}$
\end{Ex}

\begin{Lem}
Let $\overline{\mathrm{gr}(A)}=kQ/I_2$ be defined in $(2.2)$. If\ $\kappa_1\geq2$, then the cardinality of $\mathrm{Ba}(\overline{\mathrm{gr}(A)})$ is infinite and $\mathrm{gr}(A)$ is not of polynomial growth.
\end{Lem}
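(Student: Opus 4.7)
The plan is to produce two distinct bands in $\overline{\mathrm{gr}(A)}$ that share a common directed substring followed by the same inverse arrow, and then invoke Lemma \ref{nondomestic}. By Lemma \ref{e_1-character}, the hypothesis $\kappa_1 \geq 2$ supplies three pairwise distinct unbalanced edges $v_S^{(i)}\defeqi v_L^{(i)}$, $v_S^{(j)}\defeqj v_L^{(j)}$, $v_S^{(k)}\defeqk v_L^{(k)}$ such that both $(i,j)$ and $(i,k)$ are unbalanced edge pairs. In particular, $j$ and $k$ lie in $G_{i,S}$, with $d_G(v_S^{(j)},v_S^{(i)})+1=d_G(v_L^{(j)},v_S^{(i)})$ and similarly for $k$, so in the tree $G_{i,S}$ the edges $j$ and $k$ ``point away'' from $v_S^{(i)}$.

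First I would set up the local picture in $Q$ around the three unbalanced edges, exactly as in case $(b)$ of Lemma \ref{Brauer-tree-nondomestic}: at each of $v_L^{(i)}, v_L^{(j)}, v_L^{(k)}$ one has a full special cycle whose initial arrows give directed paths $\alpha'_t\ldots\alpha'_1$, $\beta'_{t_1}\ldots\beta'_1$, $\delta'_{t_3}\ldots\delta'_1$ of lengths $\mathrm{grd}(v_L^{(i)})$, $\mathrm{grd}(v_L^{(j)})$, $\mathrm{grd}(v_L^{(k)})$, and by the description of $\mathbb{P}$ these are precisely $r_i, r_j, r_k$ and are not in $I_2$.

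Next, using Remark \ref{construct-simple-string} applied to the walk in $G_{i,S}$ from $v_S^{(i)}$ to $v_S^{(j)}$, I would build a simple string $c_{k_1}\ldots c_1$ in $\overline{\mathrm{gr}(A)}$ with $c_1=\alpha_s^{-1}$ (plunging back into the smaller cycle at $v_S^{(i)}$) and $t(c_{k_1})=j$, and analogously a simple string $d_{k'_1}\ldots d_1$ with $d_1=\alpha_s^{-1}$ and $t(d_{k'_1})=k$. Splicing in the special cycles at $v_L^{(j)}$ and $v_L^{(k)}$ and branching on whether $c_{k_1}$ and $d_{k'_1}$ are arrows or inverse arrows (the same two sub-cases treated in Lemma \ref{Brauer-tree-nondomestic}(b)), I then get two bands with source $i$ of the form
\begin{align*}
b_1 &= c'_{k_2}\ldots c'_1\, \beta'_{t_1}\ldots\beta'_1\, c_{k_1}\ldots c_1\, \alpha'_t\ldots\alpha'_1,\\
b_2 &= d'_{k'_2}\ldots d'_1\, \delta'_{t_3}\ldots\delta'_1\, d_{k'_1}\ldots d_1\, \alpha'_t\ldots\alpha'_1.
\end{align*}

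It then remains to check that $b_1,b_2$ satisfy the hypotheses of Lemma \ref{nondomestic}: the common tail $\alpha'_t\ldots\alpha'_1$ is a directed substring (it is the special cycle at $v_L^{(i)}$ realising $r_i$), it is followed in both bands by the same inverse arrow $\alpha_s^{-1}$, and the remaining portions travel, respectively, into the subtree of $G_{i,S}$ containing $j$ and into the subtree containing $k$; since $(i,j)$ and $(i,k)$ are both unbalanced edge pairs with $j\ne k$, these two subtrees diverge from the branching vertex on the walk, which forces $b_1\ne b_2$ and guarantees that $s(c_m)\ne i$ (resp. $s(d_m)\ne i$) for $m$ strictly after the shared segment. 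Once this bookkeeping is verified, Lemma \ref{nondomestic} immediately yields both conclusions: $|\mathrm{Ba}(\overline{\mathrm{gr}(A)})|$ is infinite and $\mathrm{gr}(A)$ is not of polynomial growth. The main obstacle is the careful case analysis in the construction of $b_1,b_2$ (choosing the correct orientation at each junction so that no forbidden subword appears and so that the common directed substring really is directed), but this is essentially a direct adaptation of the construction already carried out in Lemma \ref{Brauer-tree-nondomestic}(b).
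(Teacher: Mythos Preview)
Your approach is essentially the same as the paper's: invoke Lemma \ref{e_1-character} to obtain three unbalanced edges with $(i,j)$ and $(i,k)$ both unbalanced edge pairs, build two bands $b_1,b_2$ sharing the directed special cycle $r_i$ at $v_L^{(i)}$ followed by the same inverse arrow into $G_{i,S}$, and apply Lemma \ref{nondomestic}. The only cosmetic difference is notation (the paper uses $\gamma$ for the cycle at $i$ and $\alpha$ for the cycle at $k$), and your justification that ``the two subtrees diverge'' is slightly imprecise---$j$ and $k$ need not lie in different branches of $G_{i,S}$, but $b_1\ne b_2$ follows simply from $j\ne k$, and the non-return condition $s(c_m)\ne i$ follows from the tree structure of $G_{i,S}$ together with the fact that $v_L^{(j)},v_L^{(k)}\notin\{v_S^{(i)},v_L^{(i)}\}$.
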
	
\begin{proof}
Since  $\kappa_1\geq2$, by Lemma \ref{e_1-character}, there are three unbalanced edges $v^{(i)}_S\defeqi v^{(i)}_L$, $v^{(j)}_S\defeqj v^{(j)}_L$ and  $v^{(k)}_S\defeqk v^{(k)}_L$ in $G$ such that $(i,j)$ and $(i,k)$ are unbalanced edge pairs. Then $Q$ contains the following subquiver
$$\begin{small}
\xymatrix@C-0.8em@R-0.5em{
				&\cdot \ar@/^0.63pc/[rd]_{\beta_{t_{1}}'} &                                  &  \cdot\ar@/^0.635pc/[rd]_{\beta_{2}}    &  \\
				\ldots\ar@/^0.635pc/[ru]    &       &   j\ar@/^0.63pc/[ru]_{\beta_{1}}\ar@/^0.63pc/[ld]_{\beta_{1}'} &    &\ldots\ar@/^0.63pc/[ld]\\
				&           \cdot\ar@/^0.63pc/[lu]_{\beta_{2}'}     &        & \cdot\ar@/^0.63pc/[lu]_{\beta_{s_{1}}} &
}
\xymatrix@dl@C-2em@R-1em{& & \\
				& \udots &}	
\xymatrix@l@C-0.8em@R-0.5em{
				&\cdot \ar@/^0.63pc/[rd]_{\gamma_{s_2}} &                                  &  \cdot\ar@/^0.635pc/[rd]_{\gamma_{2}'}    &  \\
				\ldots\ar@/^0.635pc/[ru]    &       &   i\ar@/^0.63pc/[ru]_{\gamma_{1}'}\ar@/^0.63pc/[ld]_{\gamma_{1}} &    &\ldots\ar@/^0.63pc/[ld]\\
				&           \cdot\ar@/^0.63pc/[lu]_{\gamma_{2}}     &        & \cdot\ar@/^0.63pc/[lu]_{\gamma_{t_2}'} &\quad
}
\xymatrix@dl@C-2em@R-1em{& & \\
				&\ddots &
}
\xymatrix@C-0.8em@R-0.5em{
				&\cdot \ar@/^0.63pc/[rd]_{\alpha'_{t}} &                                  &  \cdot\ar@/^0.635pc/[rd]_{\alpha_{2}}   &  \\
				\ldots\ar@/^0.635pc/[ru]    &       &   k\ar@/^0.63pc/[ru]_{\alpha_{1}}\ar@/^0.63pc/[ld]_{\alpha'_{1}} &    &\ldots\ar@/^0.63pc/[ld]\\
				&           \cdot\ar@/^0.63pc/[lu]_{\alpha'_{2}}     &        & \cdot\ar@/^0.63pc/[lu]_{\alpha_{s}} &\quad,
}
\end{small}$$
where $s=\mathrm{val}(v^{(k)}_S)$, $t=\mathrm{val}(v^{(k)}_L)$, $s_1=\mathrm{val}(v^{(j)}_S)$, $t_1=\mathrm{val}(v^{(j)}_L)$, $s_2=\mathrm{val}(v^{(i)}_S)$, $t_2=\mathrm{val}(v^{(i)}_L)$, $\alpha_{t}'\ldots\alpha_{1}'$, $\beta_{t_{1}}'\ldots\beta_{1}'$ and $\gamma_{t_2}'\ldots\gamma_{1}'$ are not in $I_2$.
		
There is a simple string $c_{k_1}\ldots c_2 c_1$ (resp. $d_{k'_1}\ldots d_2 d_1$) satisfying $c_1=\gamma^{-1}_{s_2}$ and $t(c_{k_1})=j$ (resp. $d_1=\gamma^{-1}_{s_2}$ and $t(d_{k'_1})=k$).
		
\begin{enumerate}[(1)]
\item If $c_{k_1}$ is an inverse arrow (in other words, $c_{k_1}=\beta_{1}^{-1}$), then $\beta_{t_{1}}'\ldots\beta_{1}'c_{k_1}\ldots c_2 c_{1}\gamma_{t_2}'\cdots\gamma_{1}'$ is also a string. There exists a simple string $c_{k_2}'\ldots c'_2 c_{1}'$ satisfying $c_{1}'=\beta_{s_{1}}^{-1}$ and $t(c'_{k_2})=i$. Then $$b_1:=c_{k_2}'\ldots  c'_2 c_{1}'\beta_{t_{1}}'\ldots\beta_{1}'c_{k_1}\ldots c_2 c_{1}\gamma_{t_2}'\cdots\gamma_{1}'$$ is a band with source $i$.
			
\item If $c_{k_1}$ is an arrow (in other words, $c_{k_1}=\beta_{s_{1}}$), then ${(\beta_{1}')}^{-1}$ $\ldots{(\beta_{t_{1}}')}^{-1}c_{k_1}\ldots c_2 c_1\gamma_{t_2}'\ldots\gamma_{1}'$ is also a string. In this situation we can similarly get a band $b_1$ as in $(1)$.
\end{enumerate}

Similarly, we have two cases for $d_{k'_1}$. Then  $b_2:=d_{k'_2}'\ldots d'_2 d_{1}'\alpha'_{t}\ldots\alpha'_2\alpha'_1 d_{k'_1}\ldots d_2 d_{1}\gamma_{t_2}'\cdots\gamma_{1}'$ (or $b_2:=d_{k'_2}'\ldots d'_2 d_{1}'\\(\alpha'_1)^{-1}\ldots(\alpha'_t)^{-1} d_{k'_1}\ldots d_2 d_{1}\gamma_{t_2}'\cdots\gamma_{1}'$) is a band with source $i$. Moreover, $b_1$ and $b_2$ satisfy the condition of Lemma \ref{nondomestic} by construction. Therefore, the cardinality of $\mathrm{Ba}(\overline{\mathrm{gr}(A)})$ is infinite and $\mathrm{gr}(A)$ is not of polynomial growth.
\end{proof}
	
By the above results, we have the following characterization of domestic representation type of $\mathrm{gr}(A)$.
	
\begin{Prop}\label{Brauer-tree-domestic}
Let $A$ be the Brauer tree algebra associated with a Brauer tree with an exceptional vertex $v_0$ of multiplicity  $m_0$ and $\mathrm{gr}(A)$ the graded algebra associated with the radical filtration of $A$. Then the following are equivalent.
\begin{enumerate}[$(1)$]
    \item	$\mathrm{gr}(A)$ is of polynomial growth.
	\item $\mathrm{gr}(A)$ is domestic.
	\item $\mathrm{gr}(A)$ is $1$-domestic.
	\item $\kappa_0(m_0-1)+\kappa_1=1$.
	\item The cardinality of $\mathrm{Ba}(\overline{\mathrm{gr}(A)})$ is finite, where  $\overline{\mathrm{gr}(A)}$ is defined in $(2.2)$.
\end{enumerate}
\end{Prop}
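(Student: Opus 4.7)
My plan is to assemble the preceding lemmas of this section into the five-way equivalence by a case analysis on the multiplicity $m_0$ of the exceptional vertex and on the pair $(\kappa_0,\kappa_1)$. The implications $(3)\Rightarrow(2)\Rightarrow(1)$ are immediate from the general chain domestic $\Rightarrow$ polynomial growth, so the genuine content lies in establishing $(1)\Leftrightarrow(4)$, $(4)\Rightarrow(3)$, and $(5)\Leftrightarrow(1)$, and this can be done simultaneously by keeping track of the band set $\mathrm{Ba}(\overline{\mathrm{gr}(A)})$ throughout the case split.

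For the equivalence $(1)\Leftrightarrow(4)$, I would split on $m_0$. When $m_0\geq 3$, any $\kappa_0\geq 1$ forces $\kappa_0(m_0-1)\geq 2$ and triggers Proposition \ref{Brauer-tree-not-polynomial}, so polynomial growth fails; by Lemma \ref{e_1} the alternative $\kappa_0=0$ forces $\kappa_1=0$, so $\mathrm{gr}(A)$ is of finite representation type and again $(4)$ does not hold. When $m_0=2$, condition $(4)$ becomes $\kappa_0+\kappa_1=1$, and Lemma \ref{e_1} rules out the combination $\kappa_0=0,\kappa_1=1$, leaving only $\kappa_0=1,\kappa_1=0$; the intermediate lemma giving $1$-domesticity handles this case, while the remaining configurations ($\kappa_0\geq 2$, or $\kappa_1\geq 1$) are handled by Lemma \ref{Brauer-tree-nondomestic}. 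When $m_0=1$, $(4)$ reduces to $\kappa_1=1$, handled by the same intermediate $1$-domesticity lemma, whereas $\kappa_1\geq 2$ is covered by the final lemma of this subsection.

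The implication $(4)\Rightarrow(3)$ is then read off directly: in each of the two surviving instances of $(4)$ the intermediate lemma outputs exactly $1$-domesticity, not merely polynomial growth. For $(5)\Leftrightarrow(1)$, the same intermediate lemma records $|\mathrm{Ba}(\overline{\mathrm{gr}(A)})|=1$ in its statement, while the non-polynomial-growth conclusions of Lemma \ref{Brauer-tree-nondomestic} and of the final lemma are produced through Lemma \ref{nondomestic}, whose construction supplies an explicit infinite family of pairwise distinct bands. The one case needing a small extra step is Proposition \ref{Brauer-tree-not-polynomial}, whose proof deduces non-polynomial growth of a quotient algebra $C$ from an auxiliary Brauer graph algebra $\overline B$: infiniteness of $\mathrm{Ba}(\overline B)$ comes from Theorem \ref{not-of-polynomial} applied to $B$, and the quotient step preserves these bands by Lemma \ref{same-number}, so they descend to infinitely many distinct bands of $\overline{\mathrm{gr}(A)}$.

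The main delicate point is this last bookkeeping, namely ensuring that every mechanism that produces failure of polynomial growth in the preceding lemmas is actually witnessed by an explicit infinite set of bands, so that $(5)$ slots into the same case analysis without any separate argument. Once this is verified, the proposition is a clean synthesis of the preceding material, and no new representation-theoretic input beyond what has already been developed is required.
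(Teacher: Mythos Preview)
Your proposal is correct and is exactly the approach the paper takes: the paper gives no explicit proof beyond the phrase ``By the above results, we have the following characterization'', and your case split on $(m_0,\kappa_0,\kappa_1)$ assembling Proposition~\ref{Brauer-tree-not-polynomial}, the intermediate $1$-domesticity lemma, Lemma~\ref{Brauer-tree-nondomestic}, and the final $\kappa_1\geq 2$ lemma is precisely how those preceding results fit together. Your additional bookkeeping for condition $(5)$ through the quotient construction in Proposition~\ref{Brauer-tree-not-polynomial} is correct and makes explicit a step the paper leaves implicit.
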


\section{The case that $G$ is a tree with $m(v)=2$ for exactly two vertices and $m(v)=1$ for other vertices}

In this section, we describe when $\mathrm{gr}(A)=kQ/I'$ is domestic under the assumption that $G=(V(G), E(G), m)$ is a tree with $m(v)=2$ for exactly two vertices $v=w_0,w_1\in V(G)$ and $m(v)=1$ for all $v\in V(G)$, $v\neq w_0,w_1$, where $V(G)$ is the vertex set, $E(G)$ is the edge set and $m$ is the multiplicity function of $G$.
	
\begin{Prop}\label{tree-not-polynomial}
Let $\overline{\mathrm{gr}(A)}=kQ/I_2$ be defined in $(2.2)$.
If there is an unbalanced edge $v_S\defeqi v_L$ in $G$ such that $w_0$ and $w_1$ are in $G_{i,S}$. Then $\overline{\mathrm{gr}(A)}$ and $\mathrm{gr}(A)$ are not of polynomial growth.
\end{Prop}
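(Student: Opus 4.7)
The plan is to imitate the strategy of Proposition~\ref{Brauer-tree-not-polynomial}: I pass from $\overline{\mathrm{gr}(A)}$ to a smaller string algebra $C$ by killing an idempotent and the ``long cycles'' $r_j$ indexed by the unbalanced edges lying in $G_{i,S}$, identify $C$ as a radical quotient of the string algebra $\overline{B}$ attached to an auxiliary Brauer graph $G'$ whose associated Brauer graph algebra $B$ fails polynomial growth by Theorem~\ref{Brauer-graph-domestic}, and then transport this failure back to $\overline{\mathrm{gr}(A)}$ (and hence to $\mathrm{gr}(A)$) via Lemma~\ref{same-number}.

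Concretely, enumerate the cyclic ordering $i_1<\cdots<i_t<i_1$ at $v_L$ with $i_1=i$ and $t=\mathrm{val}(v_L)$, writing $i_k$ for the edge between $v_L$ and a vertex $v'_k$ for $k\geq 2$. Put $E_1=\{i_1,\dots,i_t\}$, $E_2=E(G_{i,S})\cup E_1$, and let $E_3$ consist of the unbalanced edges lying in $G_{i,S}$. With $e=\sum_{j\in E(G)\setminus E_2}e_j$, the composite surjection
\[
\mathrm{gr}(A)\twoheadrightarrow\overline{\mathrm{gr}(A)}\twoheadrightarrow C:=\overline{\mathrm{gr}(A)}/\bigl(\overline{\mathrm{gr}(A)}\,e\,\overline{\mathrm{gr}(A)}\oplus\bigoplus_{j\in E_3}r_j\bigr)
\]
realizes $C$ as $kQ'/I_C$, where $Q'$ is obtained from $Q$ by deleting the vertices corresponding to $E(G)\setminus E_2$ together with the arrows incident to them.

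Next, I introduce the auxiliary Brauer graph $G'=(V(G_{i,S})\cup\{v_L,v'_2,\dots,v'_t\},E_2,m')$ with $m'(w_0)=m'(w_1)=m'(v_L)=2$ and $m'(v)=1$ otherwise. Since $G'$ is a tree with exactly three vertices of multiplicity $2$, it matches none of the cases of Theorem~\ref{Brauer-graph-domestic} and is not a Brauer tree, so the corresponding Brauer graph algebra $B$, and therefore $\overline{B}$, is not of polynomial growth. As in Proposition~\ref{Brauer-tree-not-polynomial}, one then checks from the constructions that there is an algebra isomorphism
\[
C\cong\overline{B}/\bigl(\bigoplus_{i_k\in E_1\setminus E_4}\rad^{t+1}(P_{i_k})\oplus\bigoplus_{i_k\in E_4}\rad^{t}(P_{i_k})\bigr),
\]
with $E_4=\{i_k\mid\mathrm{grd}(v_L)\geq\mathrm{grd}(v'_k),\ \mathrm{val}(v'_k)>1,\ 2\leq k\leq t\}$, so that $C\cong\overline{B}/J$ for an ideal $J\subseteq\rad^2(\overline{B})$ generated by the indicated radical powers.

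The main step, and what I expect to be the chief obstacle, is the band-avoidance claim: every band of $\overline{B}$ avoids the generators of $J$ (after rotation or inversion), so that the band modules of $C$ and of $\overline{B}$ coincide. This is the analogue of the ``Claim'' in the proof of Proposition~\ref{Brauer-tree-not-polynomial}, but now has to be verified at three multiplicity-$2$ vertices ($w_0$, $w_1$, and the inflated $v_L$) simultaneously: at each such vertex, a band traversing a full special cycle would be forced to repeat the unique arrow starting and ending at the corresponding vertex of $Q'$, which collides either with the socle relations inherited from $\overline{B}$ at $w_0,w_1$ or with the fact that the shorter cycle at $v_S$ has already been killed in $I_2$ at $v_L$. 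Once the claim is settled, Lemma~\ref{same-number} applied to the quotient $\overline{B}\twoheadrightarrow C$ shows that $C$ is not of polynomial growth, and hence neither are $\overline{\mathrm{gr}(A)}$ nor $\mathrm{gr}(A)$, as desired.
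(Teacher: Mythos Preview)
Your approach is essentially the same as the paper's, which simply says the proof is ``similar to Proposition~\ref{Brauer-tree-not-polynomial}.'' Two minor corrections: the inequality defining $E_4$ should read $\mathrm{grd}(v_L)\leq\mathrm{grd}(v'_k)$ (as in Proposition~\ref{Brauer-tree-not-polynomial}), not $\geq$; and your anticipated obstacle is illusory, since the generators of $J$ lie entirely in the special cycle at $v_L$ (the vertices $i_k$ for $k\geq 2$ are leaves in $G'$, so only one arrow starts and ends there in $Q'$), hence the rotation argument from the Claim in Proposition~\ref{Brauer-tree-not-polynomial} carries over verbatim without any separate verification at $w_0$ or $w_1$.
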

	
\begin{proof}
Using an approach similar to the proof of Proposition \ref{Brauer-tree-not-polynomial}, we have that $\overline{\mathrm{gr}(A)}$ and $\mathrm{gr}(A)$ are not of polynomial growth.
\end{proof}
	
\begin{Lem}\label{tree-nondomestic}
Let $\overline{\mathrm{gr}(A)}=kQ/I_2$ be defined in $(2.2)$. If there is an unbalanced edge $v_S\defeqi v_L$ in $G$ such that $w_0$ and $w_1$ are in different connected branch of  $G\setminus i$. Then the cardinality of $\mathrm{Ba}(\overline{\mathrm{gr}(A)})$ is infinite and $\mathrm{gr}(A)$ is not of polynomial growth.
\end{Lem}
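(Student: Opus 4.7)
The plan is to exhibit two distinct bands $b_1,b_2 \in \mathrm{Ba}(\overline{\mathrm{gr}(A)})$ satisfying the hypotheses of Lemma \ref{nondomestic}; this will immediately yield both $|\mathrm{Ba}(\overline{\mathrm{gr}(A)})|=\infty$ and the failure of polynomial growth of $\mathrm{gr}(A)$.

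The key algebraic inputs are three facts about $\overline{\mathrm{gr}(A)}=kQ/I_2$: $(\mathrm{i})$ at the unbalanced edge $i$, only the shorter monomial $p_i$ is added to $I_2$, so the longer path $r_i\in\mathbb{P}$ remains a valid directed string from $i$ to $i$ in $\overline{\mathrm{gr}(A)}$; $(\mathrm{ii})$ each special cycle $C_{w_k}$ ($k=0,1$) is a valid directed string because $m(w_k)=2$, whereas $C_{w_k}^2\in I_2$; and $(\mathrm{iii})$ since $G$ is a tree, Remark \ref{construct-simple-string} applied to any walk in $G$ produces simple strings in $\overline{\mathrm{gr}(A)}$. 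Write $C_{v_S}=\alpha_s\cdots\alpha_1$ and $C_{v_L}=\alpha'_t\cdots\alpha'_1$ for the special cycles at the quiver vertex $i$ and, without loss of generality, assume $w_0\in V(G_{i,S})$ and $w_1\in V(G_{i,L})$.

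Following the template of Case~2 of Lemma \ref{Brauer-tree-nondomestic}, I take the base point to be an edge $e_0$ of $G$ incident to $w_0$ (chosen so that the other endpoint $u_0$ of $e_0$ also contributes a special cycle at $e_0$), and use $C_{w_0}$ as the common directed closed block at $e_0$ together with a common subsequent inverse arrow at $e_0$ (namely, the inverse of the last arrow of the special cycle at $u_0$). Both bands then propagate from $e_0$ toward $i$ along the unique quiver walk induced by the Brauer-graph walk from $w_0$ to $v_S$; at the vertex $i$, $b_1$ performs the directed loop $r_i$ and returns to $e_0$, whereas $b_2$ crosses the edge $i$, walks into $G_{i,L}$ to an edge $e_1$ incident to $w_1$, performs a full signed traversal of $C_{w_1}$, and returns to $e_0$. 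The orientations of the special cycles $C_{w_0},C_{w_1}$ and the precise shapes of the ``return'' subwalks are determined by the arrow-vs.-inverse-arrow case analysis already carried out in the proof of Lemma \ref{Brauer-tree-nondomestic}; that analysis shows admissible choices exist so that no forbidden subword of $I_2$ appears in either $b_k$.

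Each $b_k$ is then a string closed at $e_0$, contains both arrows and inverse arrows, and is not a proper power because the simple-walk portions use pairwise distinct intermediate vertices; hence $b_1,b_2\in\mathrm{Ba}(\overline{\mathrm{gr}(A)})$. They are distinct because $b_1$ uses only quiver vertices corresponding to edges in $G_{i,S}\cup\{i\}$ while $b_2$ reaches $e_1\in G_{i,L}$ and traverses $C_{w_1}$. By construction $b_1$ and $b_2$ share the directed prefix $C_{w_0}$ followed by the common inverse arrow described above, so Lemma \ref{nondomestic} applies and yields the conclusion. The main obstacle will be the junction-by-junction case analysis at the vertex $i$ (in particular how $b_2$ crosses into $G_{i,L}$, whether via $\alpha'_1$ or $(\alpha'_t)^{-1}$, and how $C_{w_1}$ must be oriented at $e_1$) and the handling of degenerate configurations such as $\mathrm{val}(w_k)=1$; each subcase is dispatched by precisely the mechanism used in Lemma \ref{Brauer-tree-nondomestic}, with the role played by the exceptional vertex $v_0$ there now played symmetrically by $w_0$ and $w_1$.
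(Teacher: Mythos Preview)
Your proposal is correct and follows essentially the same approach as the paper: both construct two bands based at an edge incident to $w_0$, sharing the special cycle $C_{w_0}$ as the common directed prefix followed by a common inverse arrow, with one band using the longer path $r_i=\alpha'_t\cdots\alpha'_1$ at the unbalanced edge and the other band reaching $w_1$ and using $C_{w_1}$, then invoking Lemma~\ref{nondomestic}. The only cosmetic difference is that your labels $b_1,b_2$ are swapped relative to the paper's, and the paper writes out the simple strings and the arrow-versus-inverse-arrow dichotomy explicitly rather than deferring to Lemma~\ref{Brauer-tree-nondomestic}.
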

	
\begin{proof}
Without loss of generality, we assume that $w_0$ is in $G_{i,S}$ and $w_1$ is in $G_{i,L}$. We consider the walk $[v_{1},a_{1},v_{2},\ldots, v_{k-1}, a_{k-1},v_{k}]$ from $w_0$ to $w_1$, where  $v_{1}=w_0$, $v_{k}=w_1$. There is an edge $v_j\defeqtj v_{j+1}$ in the walk such that $a_j=i$, $v_j=v_S$ and $v_{j+1}=v_L$. Then $Q$ contains the following subquiver
		
$$\begin{small}		
\xymatrix@r@C-0.8em@R-0.5em{
				&\cdot \ar@/^0.63pc/[rd]_{\beta_{t_{1}}'} &                                  &  \cdot\ar@/^0.635pc/[rd]_{\beta_{2}}    &  \\
				\ldots\ar@/^0.635pc/[ru]    &       &   a_{k-1}\ar@/^0.63pc/[ru]_{\beta_{1}}\ar@/^0.63pc/[ld]_{\beta_{1}'} &    &\ldots\ar@/^0.63pc/[ld]\\
				&           \cdot\ar@/^0.63pc/[lu]_{\beta_{2}'}     &        & \cdot\ar@/^0.63pc/[lu]_{\beta_{s_{1}}} &
}
\xymatrix@R-0.5em@C-3em{& & \\
				& \ldots\ldots &}
\xymatrix@r@C-0.8em@R-0.5em{
				&\cdot \ar@/^0.63pc/[rd]_{\alpha'_{t}} &                                  &  \cdot\ar@/^0.635pc/[rd]_{\alpha_{2}}   &  \\
				\ldots\ar@/^0.635pc/[ru]    &       &   i\ar@/^0.63pc/[ru]_{\alpha_{1}}\ar@/^0.63pc/[ld]_{\alpha'_{1}} &    &\ldots\ar@/^0.63pc/[ld]\\
				&           \cdot\ar@/^0.63pc/[lu]_{\alpha'_{2}}     &        & \cdot\ar@/^0.63pc/[lu]_{\alpha_{s}} &\quad
}
\xymatrix@R-0.5em@C-3em{& & \\
				& \ldots\ldots &}
\xymatrix@r@C-0.8em@R-0.5em{
				&\cdot \ar@/^0.63pc/[rd]_{\gamma_{s_2}} &                                  &  \cdot\ar@/^0.635pc/[rd]_{\gamma_{2}'}    &  \\
				\ldots\ar@/^0.635pc/[ru]    &       &   a_1\ar@/^0.63pc/[ru]_{\gamma_{1}'}\ar@/^0.63pc/[ld]_{\gamma_{1}} &    &\ldots\ar@/^0.63pc/[ld]\\
				&           \cdot\ar@/^0.63pc/[lu]_{\gamma_{2}}     &        & \cdot\ar@/^0.63pc/[lu]_{\gamma_{t_2}'} &\quad ,
}
\end{small}$$
where $s=\mathrm{val}(v_S)$, $t=\mathrm{val}(v_L)$, $s_1=\mathrm{val}(v_{k-1})$, $t_1=\mathrm{val}(w_1)$, $s_2=\mathrm{val}(v_2)$, $t_2=\mathrm{val}(w_0)$,  $\alpha_{t}'\ldots\alpha_{1}'$, $\beta_{t_{1}}'\ldots\beta_{1}'$ and $\gamma_{t_2}'\ldots\gamma_{1}'$ are not in $I_2$.
		
There is a simple string $c_{k_1}\ldots c_2 c_1$ (resp. $d_{k'_1}\ldots d_2 d_1$) satisfying $c_1=\gamma^{-1}_{s_2}$ and $t(c_{k_1})=a_{k-1}$ (resp. $d_1=\gamma^{-1}_{s_2}$ and $t(d_{k'_1})=i$).
		
\begin{enumerate}[$(1)$]
\item If $c_{k_1}$ is an inverse arrow (in other words, $c_{k_1}=\beta_{1}^{-1}$), then $\beta_{t_{1}}'\ldots\beta_{1}'c_{k_1}\ldots c_2 c_{1}\gamma_{t_2}'\cdots\gamma_{1}'$ is also a string. There exists a simple string $c_{k_2}'\ldots c'_2 c_{1}'$ satisfying $c_{1}'=\beta_{s_{1}}^{-1}$ and $t(c'_{k_2})=a_1$. Then	
$$b_1:=c_{k_2}'\ldots  c'_2 c_{1}'\beta_{t_{1}}'\ldots\beta_{1}'c_{k_1}\ldots c_2 c_{1}\gamma_{t_2}'\cdots\gamma_{1}'$$ is a band with source $a_1$.
\item If $c_{k_1}$ is an arrow (in other words, $c_{k_1}=\beta_{s_{1}}$), then ${(\beta_{1}')}^{-1}$ $\ldots{(\beta_{t_{1}}')}^{-1}c_{k_1}\ldots c_2 c_1\gamma_{t_2}'\ldots\gamma_{1}'$ is also a string. In this situation we can similarly get a band $b_1$ as in $(1)$.
\end{enumerate}

Similarly, we have two cases for $d_{k'_1}$. Then  $b_2:=d_{k'_2}'\ldots d'_2 d_{1}' \alpha'_{t}\ldots\alpha'_2\alpha'_1d_{k'_1}\ldots d_2 d_{1}\gamma_{t_2}'\cdots\gamma_{1}'$ (or $b_2:=d_{k'_2}'\ldots d'_2 d_{1}'\\ (\alpha'_1)^{-1}\ldots(\alpha'_t)^{-1} d_{k'_1}\ldots d_2 d_{1}\gamma_{t_2}'\cdots\gamma_{1}'$) is a band with source $a_1$. Moreover, $b_1$ and $b_2$ satisfy the condition of Lemma \ref{nondomestic} by construction. Therefore, the cardinality of $\mathrm{Ba}(\overline{\mathrm{gr}(A)})$ is infinite and $\mathrm{gr}(A)$ is not of polynomial growth. 
\end{proof}
	
By Proposition \ref{1-domestic-tree}, Proposition \ref{tree-not-polynomial} and Lemma \ref{tree-nondomestic}, we have the following characterization of domestic representation type of $\mathrm{gr}(A)$.
	
\begin{Prop}\label{tree-domestic}
Let $A$ be the Brauer graph algebra associated with a Brauer graph $G$ which is a tree with $m(v)=2$ for exactly two vertices $v=w_0,w_1$ and $m(v)=1$ for all $v\neq w_0,w_1$, and $\mathrm{gr}(A)$ the graded algebra associated with the radical filtration of $A$. Then the following are equivalent.
	\begin{enumerate}[$(1)$]
	    \item $\mathrm{gr}(A)$ is of polynomial growth.
		\item $\mathrm{gr}(A)$ is domestic.
		\item $\mathrm{gr}(A)$ is $1$-domestic.
		\item There is no unbalanced edge in $G$ or $w_0$ and $w_1$ are in $G_{i,L}$ for any unbalanced edge $v_S\defeqi v_L$ in $G$ (In other words, $G$ satisfies $\star$-condition with respect to any unbalanced edge in $G$).
		\item The cardinality of $\mathrm{Ba}(\overline{\mathrm{gr}(A)})$ is finite, where  $\overline{\mathrm{gr}(A)}$ is defined in $(2.2)$.
\end{enumerate}
\end{Prop}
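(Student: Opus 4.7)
The plan is to prove the cyclic chain $(4)\Rightarrow (3)\Rightarrow (2)\Rightarrow (1)\Rightarrow (4)$ and then fold in $(5)$ at the end, relying entirely on the three results of this section, which have been tailored precisely to this purpose. The implications $(3)\Rightarrow (2)\Rightarrow (1)$ are immediate from the definitions of $n$-domestic, domestic and polynomial growth.

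For $(4)\Rightarrow (3)$, I would split according to the dichotomy in (4). If $G$ contains no unbalanced edge, then by Proposition \ref{isomorphism} one has $\mathrm{gr}(A)\cong A$, and Theorem \ref{Brauer-graph-domestic}(a)(1) (applied to $A$, whose Brauer graph is $G$) gives $1$-domestic. If every unbalanced edge $v_S\defeqi v_L$ satisfies $w_0,w_1\in G_{i,L}$, this is exactly the hypothesis used in the second equivalence of Proposition \ref{1-domestic-tree}, whose conclusion is that $\mathrm{gr}(A)$ is $1$-domestic.

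For $(1)\Rightarrow (4)$ I would argue by contrapositive. Suppose (4) fails, so there is an unbalanced edge $v_S\defeqi v_L$ such that at least one of $w_0,w_1$ lies in $G_{i,S}$. Because $G$ is a tree, removing $i$ yields exactly the two branches $G_{i,S}$ and $G_{i,L}$, so only two sub-cases arise. In sub-case (i), both $w_0$ and $w_1$ lie in $G_{i,S}$, and Proposition \ref{tree-not-polynomial} shows $\mathrm{gr}(A)$ is not of polynomial growth. In sub-case (ii), exactly one of $w_0,w_1$ lies in $G_{i,S}$ and the other in $G_{i,L}$, so the hypothesis of Lemma \ref{tree-nondomestic} is met and the same conclusion follows. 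Either way (1) fails.

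Finally, to include $(5)$: the implication $(2)\Rightarrow (5)$ follows from Theorem \ref{polynomial-domestic}, since a domestic self-injective special biserial presentation forces finitely many primitive walks in the associated string algebra, hence finiteness of $|\mathrm{Ba}(\overline{\mathrm{gr}(A)})|$. For $(5)\Rightarrow (4)$, the explicit pairs of bands $b_1,b_2$ constructed in the proofs of Proposition \ref{tree-not-polynomial} and Lemma \ref{tree-nondomestic} produce, via Lemma \ref{nondomestic}, infinitely many pairwise non-equivalent bands in $\overline{\mathrm{gr}(A)}$ whenever (4) fails; contrapositively, finiteness of $|\mathrm{Ba}(\overline{\mathrm{gr}(A)})|$ forces (4). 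The main obstacle is essentially bookkeeping, namely the careful case analysis of $(1)\Rightarrow (4)$ and the observation that sub-case (ii) is automatically covered by Lemma \ref{tree-nondomestic} thanks to the tree hypothesis on $G$; once this is verified, everything else is an assembly of previously established lemmas with no further construction required.
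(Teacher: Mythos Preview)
Your cycle $(4)\Rightarrow(3)\Rightarrow(2)\Rightarrow(1)\Rightarrow(4)$ is correct and is exactly the paper's approach, which simply cites Proposition~\ref{1-domestic-tree}, Proposition~\ref{tree-not-polynomial} and Lemma~\ref{tree-nondomestic}.

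The handling of $(5)$, however, contains a genuine gap. Your argument for $(2)\Rightarrow(5)$ invokes Theorem~\ref{polynomial-domestic}, but that theorem is stated for \emph{self-injective} special biserial algebras, and $\overline{\mathrm{gr}(A)}$ is in general not self-injective (the paper says so explicitly in the introduction). The implication ``domestic $\Rightarrow$ finitely many bands'' is in fact the delicate direction for arbitrary string algebras; the paper itself flags it as open for general special biserial algebras in the remark following Corollary~3.17. The elementary direction is the opposite one: if $|\mathrm{Ba}(\overline{\mathrm{gr}(A)})|=N<\infty$, then each dimension $d$ carries at most $N$ one-parameter families of band modules (one per band whose length divides $d$) and only finitely many string modules, so $\overline{\mathrm{gr}(A)}$ is $N$-domestic. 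Hence $(5)\Rightarrow(2)$ is immediate, and what you actually need is an implication \emph{into} $(5)$. The cleanest is $(4)\Rightarrow(5)$: under $(4)$, Lemma~\ref{finite-number}(3) (which is precisely what the proof of Proposition~\ref{1-domestic-tree} uses) shows that the bands of $\overline{\mathrm{gr}(A)}$ coincide with those of $\overline{A}$, and the latter has exactly one.

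A smaller inaccuracy: in your $(5)\Rightarrow(4)$ you say that the proof of Proposition~\ref{tree-not-polynomial} constructs an explicit pair $b_1,b_2$ and applies Lemma~\ref{nondomestic}. It does not; that proof (via Proposition~\ref{Brauer-tree-not-polynomial}) passes to a quotient $C$ of $\overline{\mathrm{gr}(A)}$ and compares it with $\overline{B}$ for an auxiliary Brauer graph algebra $B$ which is not of polynomial growth. One can still read off infinitely many bands in $\overline{\mathrm{gr}(A)}$ from that argument---$\overline{B}$ has infinitely many bands by Theorem~\ref{not-of-polynomial}, these survive in $C$ by the Claim in that proof, and bands of a quotient are bands of $\overline{\mathrm{gr}(A)}$---but the mechanism is not the one you describe. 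Only Lemma~\ref{tree-nondomestic} proceeds via an explicit $b_1,b_2$ and Lemma~\ref{nondomestic}.
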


We can describe when $\mathrm{gr}(A)$ is domestic from the graded degrees of vertices in $G$ point of view in the following
	
\begin{Prop}\label{tree-another}
Let $A$ be the Brauer graph algebra associated with a Brauer graph $G$ which is a tree with $m(v)=2$ for exactly two vertices $v=w_0,w_1$ and $m(v)=1$ for all $v\neq w_0,w_1$, and $\mathrm{gr}(A)$ the associated graded algebra of $A$. Then $\mathrm{gr}(A)$ is domestic if and only if it satisfies the following conditions.
\begin{enumerate}[$(1)$]
	\item  $\mathrm{grd}(w_0)=\mathrm{grd}(w_1)$.
	\item Any walk from $w_0$ (or from $w_1$) is degree decreasing. 	
\end{enumerate}
\end{Prop}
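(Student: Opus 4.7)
The plan is to deduce this proposition from Proposition \ref{tree-domestic}, which already characterizes the domesticity of $\mathrm{gr}(A)$ as: either there is no unbalanced edge, or for every unbalanced edge $v_S\defeqi v_L$ in $G$, both $w_0$ and $w_1$ lie in $G_{i,L}$ (equivalently, $G$ satisfies $\star$-condition with respect to every unbalanced edge). So what remains is the purely combinatorial translation between this ``$G_{i,L}$''-formulation and conditions $(1)$--$(2)$ stated in terms of the graded degree function.

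For the implication $(1)\&(2)\Rightarrow$ domestic, I would take any unbalanced edge $v_S\defeqi v_L$, with $\mathrm{grd}(v_S)<\mathrm{grd}(v_L)$. Since $G$ is a tree, removing $i$ separates it into $G_{i,S}$ and $G_{i,L}$, and the unique walk from $w_0$ to either endpoint of $i$ first meets one of $v_S,v_L$. If $w_0\in G_{i,S}$, the unique walk from $w_0$ through $v_S$ and then $v_L$ is a walk starting at $w_0$ that is \emph{not} degree decreasing at the step $v_S\defeqi v_L$, contradicting $(2)$. Hence $w_0\in G_{i,L}$, and by the same argument $w_1\in G_{i,L}$. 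Thus $G$ satisfies $\star$-condition with respect to every unbalanced edge, and Proposition \ref{tree-domestic} delivers domesticity.

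For the converse, I would assume $\mathrm{gr}(A)$ is domestic, so by Proposition \ref{tree-domestic} both $w_0$ and $w_1$ lie in $G_{i,L}$ for every unbalanced edge $v_S\defeqi v_L$. To prove $(1)$, consider the unique walk from $w_0$ to $w_1$ in the tree; removing any edge on this walk disconnects $w_0$ from $w_1$, so such an edge cannot satisfy ``both $w_0,w_1\in G_{i,L}$'' if it is unbalanced. Therefore every edge on the $w_0$--$w_1$ walk is balanced, and $\mathrm{grd}$ is constant along this walk, yielding $\mathrm{grd}(w_0)=\mathrm{grd}(w_1)$. For $(2)$, suppose some walk $[w_0=v_1,a_1,v_2,\ldots,a_{k-1},v_k]$ starting at $w_0$ fails to be degree decreasing, with $\mathrm{grd}(v_j)<\mathrm{grd}(v_{j+1})$ at some step $v_j\defeqtj v_{j+1}$; then this edge is unbalanced with $v_S=v_j$ and $v_L=v_{j+1}$, but $w_0$ is joined to $v_j=v_S$ by the initial sub-walk that avoids $a_j$, hence $w_0\in G_{a_j,S}$, contradicting the hypothesis. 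The argument for walks starting at $w_1$ is symmetric.

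The proof is essentially an exercise in the tree structure; the only point requiring a small amount of care is observing that the tree condition forces the $w_0$--$w_1$ walk to cross any edge separating them, which together with ``both on the $L$-side'' instantly rules out unbalanced edges on this walk. No new technical obstacle arises beyond this observation, since the heavy analysis (relating $\star$-condition to string-algebra representation type) has already been carried out in Proposition \ref{1-domestic-tree}, Proposition \ref{tree-not-polynomial}, Lemma \ref{tree-nondomestic}, and Proposition \ref{tree-domestic}.
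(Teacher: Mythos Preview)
Your proposal is correct and follows essentially the same route as the paper's proof: both reduce the statement to Proposition~\ref{tree-domestic} and then translate the condition ``$w_0,w_1\in G_{i,L}$ for every unbalanced edge $i$'' into the graded-degree conditions $(1)$--$(2)$ via walks in the tree. The only cosmetic difference is that the paper argues the implication $(1)\&(2)\Rightarrow$ domestic by contradiction (assuming nondomestic and splitting into the two cases ``$w_0,w_1\in G_{i,S}$'' and ``$w_0,w_1$ in different branches''), whereas you argue directly that each $w_j$ must lie in $G_{i,L}$; the underlying observations are identical.
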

	
\begin{proof}
``$\Longrightarrow$"
Suppose on the contrary that $\mathrm{grd}(w_0)\neq \mathrm{grd}(w_1)$. Consider the walk $[v_{1},a_{1},v_{2},\ldots,v_{k-1},a_{k-1},v_{k}]$ from $w_0$ to $w_1$, where $v_1=w_0$, $v_k=w_1$, we have that there is an  unbalanced edge $v_{i}\defeqt v_{i+1}$ with $\mathrm{grd}(v_i)\neq \mathrm{grd}(v_{i+1})$ for some $1\leq i\leq k-1$ in the walk. Without loss of generality, we assume that $\mathrm{grd}(v_i)<\mathrm{grd}(v_{i+1})$. Then $w_0$ is in $G_{a_i,S}$ and $w_1$ is in $G_{a_i,L}$, by Proposition \ref{tree-domestic}, $\mathrm{gr}(A)$ is nondomestic which is a contradiction. Therefore $\mathrm{grd}(w_0)=\mathrm{grd}(w_1)$ and the condition $(1)$ holds.

In order to verify the condition (2). Suppose on the contrary that there is a vertex $w'$ in $G$ such that the walk $[v_{1},a_{1},v_{2},a_{2},v_{3},\ldots,v_{k-1},a_{k-1},v_{k}]$ from $w_0$ to $w'$  is not degree decreasing, where $v_1=w_0$ and $v_k=w'$. In other words, there exists an unbalanced edge $v_{i}\defeqt v_{i+1}$ with $\mathrm{grd}(v_{i})<\mathrm{grd}(v_{i+1})$ for some $1\leq i\leq k-1$ in the walk. We have that $w_0$ is in $G_{a_i,S}$. Moreover, since $\mathrm{gr}(A)$ is domestic, by Proposition \ref{tree-domestic}, $w_0$ is in $G_{a_i,L}$. A contradiction. 
		
``$\Longleftarrow$"  We suppose on the contrary that $\mathrm{gr}(A)$ is nondomestic. By Proposition \ref{isomorphism} and Proposition \ref{tree-domestic}, we have that there is some unbalanced edge $v_S\defeqi v_L$ such that $w_0$, $w_1$ are in $G_{i,S}$ or $w_0$, $w_1$ are in different connected branch of $G\setminus i$.
		
{\it Case 1.} If $w_0$, $w_1$ are in $G_{i,S}$. Consider the walk $[v_{1},a_{1},v_{2},\ldots,v_{k-1},a_{k-1},v_{k}]$ from $w_0$ to $v_L$, where $v_1=w_0$ and $v_{k}=v_L$. Then $i=a_{k-1}$. Since the above walk is degree decreasing, we have $\mathrm{grd}(v_S)\geq \mathrm{grd}(v_L)$, which is clearly a contradiction.	
	
{\it Case 2.} If $w_0$, $w_1$ are in different connected branch of $G\setminus i$. Consider the walk $[v_{1},a_{1},v_{2},\ldots,v_{k-1},a_{k-1},v_{k}]$ from $w_0$ to $w_1$, where $v_1=w_0$ and $v_k=w_1$. Then $i$ is an edge in the walk. Since the above walk is degree decreasing, we have  $\mathrm{grd}(w_0)\neq \mathrm{grd}(w_1)$. It contradicts the condition $(1)$.
\end{proof}

\section{The case that $G$ is a graph with a unique cycle and $m\equiv 1$}

In this section, we describe when $\mathrm{gr}(A)=kQ/I'$ is domestic in the case that $G$ is a graph with a unique cycle and $m(v)=1$ for any vertex $v$ in $G$.
	
\begin{Prop}\label{cycle-not-polynomial}
Let $\overline{\mathrm{gr}(A)}=kQ/I_2$ be defined in $(2.2)$.
If there is an unbalanced edge $v_S\defeqi v_L$ which is not an edge in the unique cycle such that the unique cycle is in $G_{i,S}$. Then $\overline{\mathrm{gr}(A)}$  and $\mathrm{gr}(A)$ are not of polynomial growth.
\end{Prop}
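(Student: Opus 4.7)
The plan is to adapt the strategy of Proposition~\ref{Brauer-tree-not-polynomial}: reduce $\overline{\mathrm{gr}(A)}$ to a controllable quotient and identify it with the string algebra of a new Brauer graph algebra that is known, via Theorem~\ref{Brauer-graph-domestic}, to be of non-polynomial growth.

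First, I would reduce to a local model near the edge $i$. Let $i_1<i_2<\cdots<i_t<i_1$ be the cyclic ordering at $v_L$, where $i_1=i$ and $t=\mathrm{val}(v_L)$; for $2\le k\le t$ let $v'_k$ denote the other endpoint of $i_k$. Set $E_1=\{i_1,\ldots,i_t\}$, $E_2=E(G_{i,S})\cup E_1$, and let $E_3$ be the set of unbalanced edges contained in $G_{i,S}$, with corresponding longer special cycles $r_j\in\mathbb{P}$. Consider the chain of algebra surjections
$$\mathrm{gr}(A)\twoheadrightarrow\overline{\mathrm{gr}(A)}\twoheadrightarrow\overline{\mathrm{gr}(A)}\big/\bigl(\overline{\mathrm{gr}(A)}\,e\,\overline{\mathrm{gr}(A)}\oplus\bigoplus_{j\in E_3}r_j\bigr)=:C,$$
where $e=\sum_{i'\in E(G)\setminus E_2}e_{i'}$. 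Crucially, $r_i$ is deliberately \emph{not} killed; it is precisely the extra path that separates $\overline{\mathrm{gr}(A)}$ from $\overline{A}$ at the edge $i$, and will be the source of non-polynomial growth.

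Second, I would construct the auxiliary Brauer graph $G'=(V(G_{i,S})\cup\{v_L,v'_2,\ldots,v'_t\},E_2,m')$ with $m'(v_L)=2$ and $m'\equiv 1$ at the remaining vertices. Since the unique cycle of $G$ lies inside $G_{i,S}$, it is also the unique cycle of $G'$, and $G'$ additionally carries a vertex of multiplicity~$2$. Hence $G'$ fits none of the domestic cases in Theorem~\ref{Brauer-graph-domestic}, so the associated Brauer graph algebra $B$ is not of polynomial growth, and neither is its string algebra $\overline{B}$.

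Third, I would exhibit an isomorphism $C\cong\overline{B}/J$ with
$$J=\bigoplus_{i_k\in E_1\setminus E_4}\rad^{t+1}(P_{i_k})\oplus\bigoplus_{i_k\in E_4}\rad^{t}(P_{i_k})\subseteq\rad^2(\overline{B}),$$
where $E_4=\{i_k\mid \mathrm{grd}(v_L)\le\mathrm{grd}(v'_k),\ 1<\mathrm{val}(v'_k),\ 2\le k\le t\}$ and $P_{i_k}$ is the indecomposable projective $\overline B$-module at $i_k$, exactly along the lines of Proposition~\ref{Brauer-tree-not-polynomial}. The relations $\bigoplus_{j\in E_3}r_j$ cut in $\overline{\mathrm{gr}(A)}$ are built into $\overline{B}$ automatically because every vertex of $G_{i,S}$ still has multiplicity $1$ in $G'$, while $m'(v_L)=2$ is exactly what keeps $r_i$ alive in $C$. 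Finally, to apply Lemma~\ref{same-number}, one must verify, in analogy with the Claim of Proposition~\ref{Brauer-tree-not-polynomial}, that no band of $\overline{B}$ contains any generator of $J$ as a substring (possibly after rotation or inversion). Granted this, Lemma~\ref{same-number} transfers the failure of polynomial growth from $\overline{B}$ to $C$, and therefore to $\overline{\mathrm{gr}(A)}$ and $\mathrm{gr}(A)$.

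The main obstacle lies in the band-avoidance step. Unlike in Proposition~\ref{Brauer-tree-not-polynomial}, where $G'$ is a tree, the graph $G'$ here already contains a topological cycle inside $G_{i,S}$, so the bands of $\overline{B}$ comprise two families: those built from the special cycles at the multiplicity-$2$ vertex $v_L$, and those that wind around the geometric cycle of $G_{i,S}$. Excluding forbidden substrings for both families simultaneously will require a careful case analysis: rotating any hypothetical offending band to start at a vertex in $E_1$ and tracking the forced arrow sequence around $v_L$ until a relation of $\overline{B}$ is violated, in the spirit of the Claim in the proof of Proposition~\ref{Brauer-tree-not-polynomial}.
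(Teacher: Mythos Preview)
Your proposal is correct and follows exactly the approach the paper intends: the paper's own proof is the single sentence ``Using an approach similar to the proof of Proposition~\ref{Brauer-tree-not-polynomial}'', and your write-up is a faithful unpacking of that sentence, with the cycle inside $G_{i,S}$ playing the role that the exceptional vertex $v_0$ plays there, and $m'(v_L)=2$ pushing $G'$ outside the domestic list of Theorem~\ref{Brauer-graph-domestic}.

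One remark: the obstacle you flag at the end is not actually an obstacle. The Claim in Proposition~\ref{Brauer-tree-not-polynomial} is proved by a purely \emph{local} argument at the special cycle around $v_L$: for $2\le k\le t$ the vertices $i_k$ of $Q'$ still have exactly one incoming and one outgoing arrow (because the $v'_k$ are truncated leaves of $G'$), so any band containing a forbidden generator of $J$ is forced, after rotation, to contain $(\alpha'_t\cdots\alpha'_1)^2\in I_{B,1}$, exactly as before. Whether $G'$ happens to carry an additional topological cycle inside $G_{i,S}$ is irrelevant to this argument, so no separate case analysis for ``bands winding around the geometric cycle'' is needed.
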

	
\begin{proof}
Using an approach similar to the proof of Proposition \ref{Brauer-tree-not-polynomial}, we have that $\overline{\mathrm{gr}(A)}$ and $\mathrm{gr}(A)$ are not of polynomial growth.  		
\end{proof}
	
\begin{Lem}\label{greater-than-1}
If some edges in the unique cycle are unbalanced edges, then the number of unbalanced edges in the unique cycle is greater than $1$. Precisely, if there is an unbalanced edge $v^{(i)}_S\defeqi v^{(i)}_L$ in the unique cycle, then there is another unbalanced edge $v^{(j)}_S\defeqj v^{(j)}_L$ with $v^{(i)}_S\neq v^{(j)}_L$ in the unique cycle such that there is a walk $[v_{1},a_{1},v_{2},\ldots, v_{k-1}, a_{k-1},v_{k}]$ from $v^{(i)}_S$ to $v^{(j)}_L$ satisfying $i\neq a_1$ and $a_{k-1}=j$, where $v_{1}=v^{(i)}_S$, $v_{k}=v^{(j)}_L$ and $a_{l}$ is an edge in the unique cycle incident to the vertices $v_{l}$ and $v_{l+1}$ for each $1\leq l\leq k-1$.
\end{Lem}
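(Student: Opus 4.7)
The plan is to traverse the unique cycle of $G$ starting at $v^{(i)}_S$ in the direction opposite to the edge $i$, and to locate the first edge along this traversal where the graded degree strictly increases. This edge will be the desired $j$.

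First, I would set up notation by observing that, since $i$ lies in the unique cycle, there is a walk $[v_{1},a_{1},v_{2},\ldots,a_{m-1},v_{m}]$ from $v_{1}=v^{(i)}_S$ to $v_{m}=v^{(i)}_L$ along the cycle that avoids $i$, namely the traversal going the other way around. Note that $v^{(i)}_S\neq v^{(i)}_L$ because $i$ is unbalanced, hence cannot be a loop, and all the $v_{l}$ are pairwise distinct because the cycle is simple. In particular, $a_{1}\neq i$ and no $a_{l}$ equals $i$.

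Next, I would examine the sequence $\mathrm{grd}(v_{1}),\mathrm{grd}(v_{2}),\ldots,\mathrm{grd}(v_{m})$. Since $i$ is unbalanced with $\mathrm{grd}(v^{(i)}_S)<\mathrm{grd}(v^{(i)}_L)$, we have $\mathrm{grd}(v_{1})<\mathrm{grd}(v_{m})$, so this sequence cannot be non-increasing. Let $l$ be the smallest index in $\{1,\ldots,m-1\}$ with $\mathrm{grd}(v_{l})<\mathrm{grd}(v_{l+1})$; such an $l$ exists by the previous observation. Setting $j:=a_{l}$ yields an unbalanced edge on the cycle with $v^{(j)}_S=v_{l}$ and $v^{(j)}_L=v_{l+1}$, and $j\neq i$ since $i$ does not appear among $a_{1},\ldots,a_{m-1}$. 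This already proves the first assertion, namely that the cycle contains at least two unbalanced edges.

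Finally, I would verify that the truncated walk $[v_{1},a_{1},v_{2},\ldots,v_{l},a_{l},v_{l+1}]$ does the job: it goes from $v^{(i)}_S$ to $v^{(j)}_L$, its first edge $a_{1}$ is different from $i$, its last edge is $a_{l}=j$, all its edges lie in the cycle, and $v^{(j)}_L=v_{l+1}\neq v_{1}=v^{(i)}_S$ because $l+1\geq 2$ and the cycle vertices are pairwise distinct. I do not anticipate a genuine obstacle here; the only mild subtlety is the edge case of a very short cycle, which is dispatched by the remark that an unbalanced edge cannot be a loop, so the cycle has at least two distinct vertices and the argument is uniform.
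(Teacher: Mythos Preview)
Your proof is correct and is in fact cleaner than the paper's argument. The paper proceeds more indirectly: it first asserts by ``connectivity of the cycle'' that a second unbalanced edge $j'$ exists, then separately handles the case $v^{(i)}_S=v^{(j')}_L$ by invoking connectivity again to produce a third unbalanced edge, and finally performs a case analysis (with two subcases) on whether the non-$i$ walk from $v^{(i)}_S$ to $v^{(j)}_L$ ends in $j$ or not, each time adjusting the walk accordingly. Your approach bypasses all of this: by walking around the cycle away from $i$ and stopping at the \emph{first} strict increase of $\mathrm{grd}$, you simultaneously produce the edge $j$, guarantee $a_{k-1}=j$ and $a_1\neq i$, and ensure $v^{(j)}_L\neq v^{(i)}_S$ automatically. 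The paper's case split is unnecessary once one uses this minimality trick; what the paper's longer argument buys is nothing extra here.
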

\begin{proof}
Note that the unique cycle is connected, if there exists an unbalanced edge in the unique cycle, then the number of  unbalanced edges in the unique cycle is greater than $1$.

For any unbalanced edge $v^{(i)}_S\defeqi v^{(i)}_L$ in the unique cycle, there is also an unbalanced edge  $v^{(j')}_S\defeqjk v^{(j')}_L$ different from $i$ in the unique cycle. If $v^{(i)}_S= v^{(j')}_L$, by the connectivity of cycle, then there is an unbalanced edge $v^{(j)}_S\defeqj v^{(j)}_L$ different from $i$ and $j'$ in the unique cycle. Therefore,  $v^{(i)}_S\neq v^{(j)}_L$.

The above shows that for any unbalanced edge $v^{(i)}_S\defeqi v^{(i)}_L$ in the unique cycle, we have another unbalanced edge $v^{(j)}_S\defeqj v^{(j)}_L$ satisfying $v^{(i)}_S\neq v^{(j)}_L$ in the unique cycle.  There is a walk $[v_{1},a_{1},v_{2},\ldots, v_{k-1}, a_{k-1},v_{k}]$ from $v^{(i)}_S$ to $v^{(j)}_L$ satisfying $i\neq a_1$, where $v_{1}=v^{(i)}_S$, $v_{k}=v^{(j)}_L$ and $a_{l}$ is an edge in the unique cycle incident to the vertices $v_{l}$ and $v_{l+1}$ for each $1\leq l\leq k-1$. Moreover, there is also a walk $[v'_{1},a'_{1},v'_{2},\ldots, v'_{k'-1}, a'_{k'-1},v'_{k'}]$ from $v^{(i)}_S$ to $v^{(j)}_L$ different from the above walk, where  $v'_{1}=v^{(i)}_S$, $v'_{k'}=v^{(j)}_L$, $a'_1=i$ and $a'_{l}$ is an edge in the unique cycle incident to the vertices $v'_{l}$ and $v'_{l+1}$ for each $1\leq l\leq k'-1$. We have the following two cases for $a_{k-1}$.

\begin{enumerate}[$(a)$]
	\item If $a_{k-1}=j$, then $j$ and the walk $[v_{1},a_{1},v_{2},\ldots, v_{k-1}, a_{k-1},v_{k}]$ give our desired result.
    \item If $a_{k-1}\neq j$, then $a'_{k'-1}=j$ and there are two cases to be considered.
\begin{enumerate}[$(1)$]
\item If $\mathrm{grd}(v^{(j)}_S)<\mathrm{grd}(v^{(i)}_L)$, by the connectivity of cycle, then there is an unbalanced edge $a'_t$ in the walk $[a'_{1}, a'_2, \ldots,  a'_{k'-1}]$ satisfying  $\mathrm{grd}(v'_{t})>\mathrm{grd}(v'_{t+1})$ and $v'_{t}\neq v^{(i)}_S$. The unbalanced edge $a'_t$ and the walk $[v_{1},a_{1},v_{2},\ldots, v_{k},j,v^{(j)}_S,\ldots,v'_{t+1},a'_t,v'_t]$ give our desired result.
\item If $\mathrm{grd}(v^{(j)}_S)\geq \mathrm{grd}(v^{(i)}_L)$, then $\mathrm{grd}(v^{(i)}_S)< \mathrm{grd}(v^{(j)}_L)$ and there is an unbalanced edge $a_t$ in the walk $[a_{1}, a_2, \ldots,  a_{k-1}]$ satisfying $\mathrm{grd}(v_{t})<\mathrm{grd}(v_{t+1})$. Therefore the unbalanced edge $a_t$ and the walk $[v_{1},a_{1},v_{2},\ldots, v_{t}, a_{t},v_{t+1}]$ give our desired result.
\end{enumerate}
\end{enumerate}
\end{proof}
	
\begin{Lem}\label{cycle-nondomestic}
Let $\overline{\mathrm{gr}(A)}=kQ/I_2$. If some edges in the unique cycle are unbalanced edges, then the cardinality of $\mathrm{Ba}(\overline{\mathrm{gr}(A)})$ is infinite and  $\mathrm{gr}(A)$ is not of polynomial growth.
\end{Lem}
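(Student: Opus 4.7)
The plan is to apply Lemma \ref{nondomestic} by constructing two distinct bands $b_1,b_2\in \mathrm{Ba}(\overline{\mathrm{gr}(A)})$ sharing a directed initial substring followed by the same inverse arrow and then diverging. First, I invoke Lemma \ref{greater-than-1}: since some edge in the unique cycle is unbalanced, there exist two unbalanced edges $v^{(i)}_S\defeqi v^{(i)}_L$ and $v^{(j)}_S\defeqj v^{(j)}_L$ in the cycle with $v^{(i)}_S\neq v^{(j)}_L$, and a walk $[v_1,a_1,v_2,\ldots,v_{k-1},a_{k-1},v_k]$ contained in the cycle with $v_1=v^{(i)}_S$, $v_k=v^{(j)}_L$, $a_1\neq i$ and $a_{k-1}=j$. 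Together with the edge $i$, this walk completes the unique cycle of $G$.

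Next, I analyze the local quiver structure around $i$, $j$, and the walk, exactly in the spirit of the subquivers displayed in Lemma \ref{tree-nondomestic} and Lemma \ref{Brauer-tree-nondomestic}: at each of $i,a_1,\ldots,a_{k-1}=j$ the vertex has two incoming and two outgoing arrows. The key input is that, since $i$ and $j$ are unbalanced, the long paths $r_i=C_{v^{(i)}_L}^{m(v^{(i)}_L)}$ and $r_j=C_{v^{(j)}_L}^{m(v^{(j)}_L)}$ are nonzero directed loops at the vertices $i$ and $j$ of $Q$ in $\overline{\mathrm{gr}(A)}$ (they lie in $\mathbb{P}$, not in $I_2$).

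Then I construct $b_1$ and $b_2$ by adapting the argument of Lemma \ref{Brauer-tree-nondomestic}, replacing the role played there by the walks through the exceptional vertex $v_0$ with the walk along the cycle provided by Lemma \ref{greater-than-1}. Concretely, at the vertex $a_1$ (which lies outside both $i$ and $j$) I select a simple string $c_{k_1}\ldots c_1$ reaching the vertex $i$ and a simple string $d_{k'_1}\ldots d_1$ reaching the vertex $j$, both starting with the inverse arrow leaving the short side of $a_1$, and close up the bands by prepending the long path $r_j$ (respectively $r_i$), forming
\[
 b_1=(\text{long path at $j$})\cdot(\text{walk from $j$ back to $a_1$})\cdot(\text{short closure at $a_1$})
\]
and an analogous $b_2$ using the long path at $i$ instead. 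After cyclic rotation, both $b_1$ and $b_2$ share the same directed substring (the long path around one of the two chosen unbalanced vertices) and the same first inverse arrow following it, while the remaining arrows traverse the simple walk along the cycle without revisiting the starting vertex, which yields the non-return condition $s(c_n)\neq s(c_1)$ required by Lemma \ref{nondomestic}.

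Finally, I verify that $b_1$ and $b_2$ are distinct (they differ in which unbalanced long path is used, producing different numbers of arrows at $v^{(i)}_L$ versus $v^{(j)}_L$) and that each is genuinely a band (it is a string because all forbidden subpaths are avoided by construction, and it is not a proper power because the walk along the cycle is traversed only once). Lemma \ref{nondomestic} then gives both conclusions. The main obstacle is the careful bookkeeping needed to identify which arrows at each $a_t$ on the walk are forced to be directed versus inverse so that all two-arrow pieces lie outside $I_2$; this is essentially the same case analysis that already appears in Lemma \ref{tree-nondomestic}, only now the connecting path is a portion of the cycle in $G$ rather than a walk between two multiplicity-two vertices.
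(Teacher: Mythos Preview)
Your proposal has a genuine gap in the construction of the two bands. You base both $b_1$ and $b_2$ at the vertex $a_1$ and assert that, after rotation, they share ``the long path around one of the two chosen unbalanced vertices'' as a common directed loop. But you have just said that $b_1$ is built from the long path $r_j$ at $j$ while $b_2$ is built from the long path $r_i$ at $i$; these are \emph{different} directed loops, so it is unclear what common directed substring $c_l\ldots c_1$ with $s(c_1)=t(c_l)$ (as required in Lemma~\ref{nondomestic}) both bands are supposed to contain. Moreover, your phrase ``the inverse arrow leaving the short side of $a_1$'' is not well-defined: the edge $a_1$ need not be unbalanced, so it has no ``short side''. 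In the Brauer-tree arguments you are imitating (Lemma~\ref{Brauer-tree-nondomestic}, case~(a)), the base edge $a_1$ is adjacent to the exceptional vertex $v_0$ with $m_0=2$, and the common directed loop is the special cycle $\gamma'_{t_2}\ldots\gamma'_1$ at $v_0$; here there is no vertex playing that role at $a_1$, so the template does not transfer.

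The paper's proof avoids this problem by a different choice of the two bands: it takes $b_1$ to be the canonical band coming from the unique cycle of $G$ (which already exists in $\overline{A}$, hence in $\overline{\mathrm{gr}(A)}$), and builds a second band $b_2$ at the vertex $i$ using the long path $r_i=\alpha'_t\ldots\alpha'_1$ together with a there-and-back traversal of the walk to $j$ that inserts the long path $r_j$. Both bands then start at $i$ with the inverse arrow $\alpha_s^{-1}$ and end at $i$ with arrows from the special cycle at $v^{(i)}_L$, so their powers and mixed products $b_2^k b_1$ are strings; the counting argument of Lemma~\ref{nondomestic} then applies (the paper says ``an approach similar to'' because $b_1$ need not contain the \emph{full} loop $\alpha'_t\ldots\alpha'_1$ when $\mathrm{val}(v^{(i)}_L)>2$, but the concatenations are still valid). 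To repair your argument you should likewise base the construction at $i$, not at $a_1$, and use the cycle band itself as one of the two bands.
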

\begin{proof}
If some edges in the unique cycle are unbalanced edges, by Lemma \ref{greater-than-1}, then there are at least two unbalanced edges $v^{(i)}_S\defeqi v^{(i)}_L$ and $v^{(j)}_S\defeqj v^{(j)}_L$ with $v^{(i)}_S\neq v^{(j)}_L$ in the unique cycle such that there is a walk $[v_{1},a_{1},v_{2},a_{2},v_{3},\ldots, v_{k-1}, a_{k-1},v_{k}]$ from $v^{(i)}_S$ to $v^{(j)}_L$ satisfying $i\neq a_1$ and $a_{k-1}=j$, where $v_{1}=v^{(i)}_S$, $v_{k}=v^{(j)}_L$ and $a_{l}$ is an edge in the unique cycle incident to the vertices $v_{l}$ and $v_{l+1}$ for each $1\leq l\leq k-1$. Then $Q$ contains the following subquiver		
$$\xymatrix@C-0.8em@R-0.5em@r{
			&\cdot \ar@/^0.63pc/[rd]_{\beta_{t_{1}}'} &                                  &  \cdot\ar@/^0.635pc/[rd]_{\beta_{2}}    &  \\
			\ldots\ar@/^0.635pc/[ru]    &       &   j\ar@/^0.63pc/[ru]_{\beta_{1}}\ar@/^0.63pc/[ld]_{\beta_{1}'} &    &\ldots\ar@/^0.63pc/[ld]\\
			&           \cdot\ar@/^0.63pc/[lu]_{\beta_{2}'}     &        & \cdot\ar@/^0.63pc/[lu]_{\beta_{s_{1}}} &
}
\xymatrix{& & \\
			& \ldots\ldots &}
\xymatrix@C-0.8em@R-0.5em@r{
			&\cdot \ar@/^0.63pc/[rd]_{\alpha_{s}} &                                  &  \cdot\ar@/^0.635pc/[rd]_{\alpha_{2}'}    &  \\
			\ldots\ar@/^0.635pc/[ru]    &       &   i\ar@/^0.63pc/[ru]_{\alpha_{1}'}\ar@/^0.63pc/[ld]_{\alpha_{1}} &    &\ldots\ar@/^0.63pc/[ld]\\
			&           \cdot\ar@/^0.63pc/[lu]_{\alpha_{2}}     &        & \cdot\ar@/^0.63pc/[lu]_{\alpha_{t}'} &\quad ,
}$$
where $s=\mathrm{val}(v^{(i)}_S)$, $t=\mathrm{val}(v^{(i)}_L)$,   $s_1=\mathrm{val}(v^{(j)}_S)$, $t_1=\mathrm{val}(v^{(j)}_L)$,   $\alpha_{t}'\ldots\alpha_{1}'$ and $\beta_{t_{1}}'\ldots\beta_{1}'$ are not in $I_2$.
		
Since there is a unique cycle in $G$, there is a band $b_1=\alpha'_t\ldots\alpha^{-1}_s$ in $\overline{A}$. Therefore $b_1$ is also a band in $\overline{\mathrm{gr}(A)}$.
		
There is a simple string $c_{k_1}\ldots c_2 c_1$ satisfying $c_1=\alpha^{-1}_s$ and $t(c_{k_1})=j$ and it is constructed from the walk $[a_1,a_2,\ldots,a_{k-1}]$.
\begin{enumerate}[$(1)$]
\item If $c_{k_1}$ is an inverse arrow (in other words, $c_{k_1}=\beta_{1}^{-1}$), then $\beta_{t_{1}}'\ldots\beta_{1}'c_{k_1}\ldots c_2 c_{1}\alpha_{t}'\cdots\alpha_{1}'$ is also a string. There exists a simple string $c_{k_2}'\ldots c'_2 c_{1}'$ satisfying $c_{1}'=\beta_{s_{1}}^{-1}$ and $t(c'_{k_2})=i$. Then	
$$b_2:=\alpha_{t}'\ldots\alpha_{1}'c_{k_2}'\ldots  c'_2 c_{1}'\beta_{t_{1}}'\ldots\beta_{1}'c_{k_1}\ldots c_2 c_{1}$$
is a band with source $i$.
\item If $c_{k_1}$ is an arrow (in other words, $c_{k_1}=\beta_{s_{1}}$), then ${(\beta_{1}')}^{-1}\ldots{(\beta_{t_{1}}')}^{-1}c_{k_1}\ldots c_2 c_1$ $\alpha_{t}'\ldots\alpha_{1}'$ is also a string. In this situation we can similarly get a band $b_2$ as in $(1)$.
\end{enumerate}

Using an approach similar to the proof of Lemma \ref{nondomestic}, we have that the cardinality of $\mathrm{Ba}(\overline{\mathrm{gr}(A)})$ is infinite and $\mathrm{gr}(A)$ is not of polynomial growth.
\end{proof}
	
We give an example to illustrate the above result.
	
\begin{Ex}	
Let $G$ be the following Brauer graph with $m\equiv1$.	
$$\xymatrix@R-1em{
			\cdot \ar@{-}[dr]_{3}\ar@{-}[r]^{2}& \cdot	\ar@{-}[r]^{1} \ar@{-}[d]^{4} &\cdot  \\
			&\cdot&	
}$$	
\\
Let $A=kQ/I$ be the Brauer graph algebra associated with $G$ and $\mathrm{gr}(A)$ the associated graded algebra of $A$. The quiver $Q$ is as follows.	
$$\xymatrix@r{
			3\ar@{->}@<-2pt>[r]_{\beta_1}\ar@{->}@<-2pt>[dr]^{\gamma_0}& 2\ar@{->}@<-2pt>[l]_{\beta_0}\ar@{->}[d]^{\alpha_1} &1\ar@{->}[l]^{\alpha_0}\\
			& 4\ar@{->}@<-2pt>[ur]_{\alpha_2}\ar@{->}@<-2pt>[ul]^{\gamma_1} &
}$$
Note that $\overline{\mathrm{gr}(A)}=\mathrm{gr}(A)/\soc(P_3)$, where $P_3$ is the projective cover of simple $\mathrm{gr}(A)$-module $S_3$ corresponding to the vertex $3$ in $Q$.
		
		We have that $b_1=\alpha_0\alpha_2\gamma^{-1}_0\beta_0 \alpha^{-1}_1\gamma_1\beta^{-1}_1$ and $b_2=\alpha_0\alpha_2\alpha_1\beta^{-1}_0\gamma_0\alpha^{-1}_2\alpha^{-1}_0\alpha^{-1}_1\gamma_1\beta^{-1}_1$ are bands in $\overline{\mathrm{gr}(A)}$.
\end{Ex}
	
By Proposition \ref{domestic-cycle}, Proposition \ref{cycle-not-polynomial} and  Lemma \ref{cycle-nondomestic}, we have the following characterization of domestic representation type of $\mathrm{gr}(A)$.
	
\begin{Prop}\label{graph-domestic}
Let $A$ be the Brauer graph algebra associated with a Brauer graph $G$ and $\mathrm{gr}(A)$ the graded algebra associated with the radical filtration of $A$, where $G$ is a graph with a unique cycle and $m(v)=1$ for all $v\in V(G)$. Then the following are equivalent.
	\begin{enumerate}[$(1)$]
		\item $\mathrm{gr}(A)$ is of polynomial growth.
		\item $\mathrm{gr}(A)$ is domestic.
		\item $\mathrm{gr}(A)$ is $1$-domestic (resp. $2$-domestic) if the unique cycle is of odd length (resp. even length).
		\item There is no unbalanced edges in $G$ or all edges in the unique cycle are not unbalanced edges and the unique cycle is in $G_{i,L}$ for any unbalanced edge $v_S\defeqi v_L$ (In other words, $G$ satisfies $\star$-condition with respect to any unbalanced edge in $G$). 
		\item The cardinality of $\mathrm{Ba}(\overline{\mathrm{gr}(A)})$ is finite, where  $\overline{\mathrm{gr}(A)}$ is defined in $(2.2)$.
\end{enumerate}
\end{Prop}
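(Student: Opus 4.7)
My plan is to prove the equivalence of the five statements via the cycle
\[(4) \Rightarrow (3) \Rightarrow (2) \Rightarrow (1) \Rightarrow (4),\]
supplemented by the equivalence $(4) \Leftrightarrow (5)$. All the technical groundwork has been laid in the preceding sections; this final proposition is mainly an assembly of those ingredients.

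The forward implications are immediate. The step $(4) \Rightarrow (3)$ is Proposition~\ref{domestic-cycle}: the hypothesis of~(4) is exactly the $\star$-condition applied to every unbalanced edge of $G$ (via the equivalence built into that proposition), and the conclusion delivers the sharp $1$-domestic (odd cycle) versus $2$-domestic (even cycle) statement. The implications $(3) \Rightarrow (2) \Rightarrow (1)$ are immediate from the definitions recalled in Subsection~2.1.

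The key implication $(1) \Rightarrow (4)$ I would prove by contrapositive. If~(4) fails, then, because $G$ has exactly one cycle with tree branches attached, exactly one of two exhaustive scenarios arises: either some edge of the unique cycle is unbalanced, in which case Lemma~\ref{cycle-nondomestic} shows $\mathrm{gr}(A)$ is not of polynomial growth; or else every cycle edge is balanced but some off-cycle unbalanced edge $v_S\defeqi v_L$ has $G_{i,S}$ containing the entire cycle, in which case Proposition~\ref{cycle-not-polynomial} gives the same conclusion. In either case, (1) fails.

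For $(4) \Leftrightarrow (5)$: assuming~(4), Lemma~\ref{finite-number}\,(2.1) forbids any band of $\overline{\mathrm{gr}(A)}$ from containing an element of $\mathbb{P}$ as a substring, so by Lemma~\ref{finite-number}\,(1) such bands correspond bijectively with the bands of $\overline{A}$; since $A$ is domestic by Theorem~\ref{Brauer-graph-domestic}, $\mathrm{Ba}(\overline{A})$ is finite and hence so is $\mathrm{Ba}(\overline{\mathrm{gr}(A)})$. Conversely, when~(4) fails, the explicit string-combinatorial constructions in Lemma~\ref{cycle-nondomestic} and Proposition~\ref{cycle-not-polynomial} produce two distinct bands $b_1,b_2$ sharing a common directed substring, and Lemma~\ref{nondomestic} then yields infinitely many pairwise distinct bands, so~(5) fails. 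The main obstacle I anticipate is a small bookkeeping check that the argument behind Proposition~\ref{cycle-not-polynomial} (given in the paper only by analogy with Proposition~\ref{Brauer-tree-not-polynomial}) really furnishes the two bands $b_1,b_2$ in the exact form required to invoke Lemma~\ref{nondomestic}; with that in hand the cycle of implications closes cleanly.
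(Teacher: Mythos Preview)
Your assembly of the implications $(4)\Rightarrow(3)\Rightarrow(2)\Rightarrow(1)\Rightarrow(4)$ is exactly the paper's approach: Proposition~\ref{domestic-cycle} gives $(4)\Rightarrow(3)$, the definitions give $(3)\Rightarrow(2)\Rightarrow(1)$, and the contrapositive $(1)\Rightarrow(4)$ splits into the two cases handled by Lemma~\ref{cycle-nondomestic} and Proposition~\ref{cycle-not-polynomial}. The paper's proof is in fact nothing more than a one-line citation of these three results.

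Your treatment of $(5)$ has one genuine misconception, namely the very obstacle you flagged. Proposition~\ref{cycle-not-polynomial} does \emph{not} work by constructing two bands and invoking Lemma~\ref{nondomestic}; it follows the template of Proposition~\ref{Brauer-tree-not-polynomial}, which passes to a quotient $C$ of $\overline{\mathrm{gr}(A)}$, identifies $C$ with a quotient of $\overline{B}$ for an auxiliary Brauer graph algebra $B$ that is not of polynomial growth, and then uses a Claim showing that every band of $\overline{B}$ survives in $C$. So the ``bookkeeping check'' you anticipate would fail as stated. There are two easy repairs. First, you can trace through that argument: since $B$ is not of polynomial growth, Theorem~\ref{not-of-polynomial} gives infinitely many bands in $\overline{B}$, the Claim transports them to $C$, and since $C$ is a quotient of $\overline{\mathrm{gr}(A)}$ these yield infinitely many bands there, so $(5)$ fails. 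Second, and more cleanly, you can avoid $\neg(4)\Rightarrow\neg(5)$ in case~B altogether by inserting $(5)$ into the cycle via the elementary implication $(5)\Rightarrow(2)$: a string algebra with finitely many bands is domestic, since each band contributes at most one one-parameter family per dimension and string modules are finite in each dimension. With either fix the argument closes.
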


We can describe when $\mathrm{gr}(A)$ is domestic from the graded degrees of vertices in $G$ point of view in the following	

\begin{Prop}\label{graph-another}
Let $A$ be the Brauer graph algebra associated with a Brauer graph $G$ and $\mathrm{gr}(A)$ the associated graded algebra of $A$, where $G$ is a graph with a unique cycle and $m\equiv 1$. Then $\mathrm{gr}(A)$ is domestic if and only if it satisfies the following conditions.
\begin{enumerate}[$(1)$]
	\item $\mathrm{grd}(u)=\mathrm{grd}(v)$ for any two distinct vertices $u$ and $v$ in the unique cycle.
	\item Any walk from any vertex in the unique cycle is degree decreasing.
\end{enumerate}
\end{Prop}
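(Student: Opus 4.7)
The plan is to reduce the claim to the combinatorial characterization of domesticity already established for this case. By Proposition~\ref{graph-domestic}, $\mathrm{gr}(A)$ is domestic if and only if (via Proposition~\ref{domestic-cycle}) the following holds: no edge in the unique cycle is unbalanced, and the unique cycle is contained in $G_{i,L}$ for every unbalanced edge $v_S\defeqi v_L$ in $G$. Call this combined property $(\star')$. It suffices to show that $(\star')$ is equivalent to the conjunction of conditions (1) and (2) in the statement.

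For the direction $(\star')\Longrightarrow (1)\,\&\,(2)$: for (1), given two distinct vertices $u,v$ in the cycle, traverse the unique cycle from $u$ to $v$; every edge traversed is balanced by $(\star')$, so its two endpoints share the same graded degree, whence $\mathrm{grd}(u)=\mathrm{grd}(v)$. For (2), suppose for contradiction that some walk $[v_1,a_1,v_2,\ldots,v_{k-1},a_{k-1},v_k]$ starting at a cycle vertex $w=v_1$ fails to be degree decreasing at step $j$, i.e.\ $a_j$ is an unbalanced edge with $v_S^{(a_j)}=v_j$ and $v_L^{(a_j)}=v_{j+1}$. Being unbalanced, $a_j$ is not a cycle edge by $(\star')$, so $G\setminus a_j=G_{a_j,S}\sqcup G_{a_j,L}$ is disconnected and the entire cycle (hence $w$) lies in $G_{a_j,L}$. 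On the other hand, the subwalk $v_1\to\cdots\to v_j$ uses only edges distinct from $a_j$, placing $w$ in the same component of $G\setminus a_j$ as $v_S^{(a_j)}$, namely $G_{a_j,S}$. This contradicts the disjointness of the two components.

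For the converse direction $(1)\,\&\,(2)\Longrightarrow(\star')$: condition (1) immediately forbids any cycle edge from being unbalanced, since such an edge would connect two cycle vertices of different graded degree. Suppose for contradiction there is an unbalanced edge $v_S\defeqi v_L$ with the cycle lying in $G_{i,S}$. Pick any cycle vertex $w$; since $G_{i,S}$ is connected, there is a simple walk inside $G_{i,S}$ from $w$ to $v_S$. Because $i$ is not a cycle edge (by (1)), $G_{i,S}$ and $G_{i,L}$ are disjoint, so $v_L\notin G_{i,S}$ and appending the edge $i$ yields an honest walk in $G$ from $w$ to $v_L$ whose final transition $v_S\to v_L$ satisfies $\mathrm{grd}(v_S)<\mathrm{grd}(v_L)$, violating (2).

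The only point requiring care is ensuring that the problematic edge $a_j$ (respectively $i$) is not a cycle edge, so that $G_{a_j,S}$ and $G_{a_j,L}$ are genuinely disjoint components of $G\setminus a_j$; this is precisely where the ``no unbalanced cycle edge'' half of $(\star')$ (respectively condition (1)) is indispensable. All remaining steps are routine graph combinatorics, relying only on the fact that walks in $G$ have pairwise distinct edges.
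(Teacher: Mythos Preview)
Your proof is correct and follows essentially the same approach as the paper: both reduce to the combinatorial characterization in Proposition~\ref{graph-domestic} (equivalently Proposition~\ref{domestic-cycle}) and then verify that $(\star')$ is equivalent to conditions (1)\,\&\,(2) via the same graph-theoretic arguments. The only cosmetic difference is that in the converse direction the paper phrases the contradiction by applying the degree-decreasing hypothesis to a walk from a cycle vertex to $v_L$, whereas you explicitly build such a walk by first reaching $v_S$ inside $G_{i,S}$ and then appending the edge $i$; these are the same argument.
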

	
\begin{proof}
``$\Longrightarrow$" Since all edges in the unique cycle are not unbalanced edges, $\mathrm{grd}(u)=\mathrm{grd}(v)$ for any two vertices $u$ and $v$ in the unique cycle (hence the condition $(1)$ holds).

In order to verify the condition (2). We suppose, on the contrary that, there is a vertex $w$ in $G$ such that a walk $[v_{1},a_{1},v_{2},\ldots,$ $v_{k-1},a_{k-1},v_{k}]$ from $v$ to $w$ is not degree decreasing, where $v$ is a vertex in the unique cycle and $v_k=w$. In other words, there is an unbalanced edge $v_{i}\defeqt v_{i+1}$ with $\mathrm{grd}(v_{i})<\mathrm{grd}(v_{i+1})$ for some $1\leq i\leq k-1$ in the walk. We have $v$ is in $G_{a_i,S}$ and the unique cycle is in $G_{a_i,S}$. Moreover, since $\mathrm{gr}(A)$ is domestic, by Proposition \ref{graph-domestic}, the unique cycle is in $G_{a_i,L}$. A contradiction.

``$\Longleftarrow$" We suppose on the contrary that $\mathrm{gr}(A)$ is nondomestic. By Proposition \ref{isomorphism} and Proposition \ref{graph-domestic}, since it contradicts the condition $(1)$ that some edges in the unique cycle are unbalanced edges, we have that all edges in the unique cycle are not unbalanced edges and therefore there is some unbalanced edge $v_S\defeqi v_L$ such that the unique cycle is in $G_{i,S}$.
For a vertex $v$ in the unique cycle and any walk $[v_{1},a_{1},v_{2},\ldots,$ $v_{k-1},a_{k-1},v_{k}]$ from $v$ to $v_L$ which is degree decreasing, we have that $i$ is an edge in the walk and therefore $\mathrm{grd}(v_S)\geq \mathrm{grd}(v_L)$, which is clearly a contradiction. Our assumption is false and therefore $\mathrm{gr}(A)$ is domestic.
\end{proof}

\subsection*{Acknowledgements} The first author is supported by Shandong Provincial Natural Science Foundation of China (No.ZR2022QA106) and China Scholarship Council. The second author is supported by the National Natural Science Foundation of China (No.12031014). The third author is partially supported by the National Natural Science Foundation of China (Grant Nos.12131015, 12161141001 and 12371042) and the Innovation Program for Quantum Science and Technology (Grant No.2021ZD0302902). We would like to thank Steffen Koenig for comments and many suggestions on the presentation of this paper. We would like to thank Yang Han for comments and useful discussions on our results. The first author would like to thank the representation theory group at the University of Stuttgart for hospitality during her study in Germany.		
\normalsize
	
\end{document}